\title{On the $E^1$--term of the gravity spectral sequence}
\author{Dai Tamaki}
\address{Department of Mathematical Sciences\\
Shinshu University\\\newline
Matsumoto 390-8621\\Japan}
\email{rivulus@math.shinshu-u.ac.jp}
\urladdr{}
\newcommand{\shot}{\mathop\simeq\limits_S}
\newcommand{\Ima}{\operatorname{Im}}
\newcommand{\FiniteSets}{\operatorname{\mathbf{Finite\ Sets}}}
\newcommand{\Spaces}{\operatorname{\mathbf{Spaces}}}
\newcommand{\whot}{\mathop\simeq\limits_w}
\newcommand{\gAbels}{\operatorname{\mathbf{Graded\ Abelian\ Groups}}}
\newcommand{\quotient}[2]{%
  \left(#1\right)
  \hspace{-4pt}\raisebox{-5pt}{$\bigg/$}\hspace{-2pt}\raisebox{-12pt}{$#2$}}
\newcommand{\rarrow}[1]{\buildrel #1 \over \longrightarrow}
\newcommand{\larrow}[1]{\buildrel #1 \over \longleftarrow}
\newcommand{\Cu}{\mathcal{C}}
\def\cnewtheorem#1[#2]#3{\newtheorem{#1}{#3}[section]
\expandafter\let\csname c@#1\endcsname\c@theorem}
\theoremstyle{plain}
\newtheorem{theorem}{Theorem}[section]
\theoremstyle{definition}
\theoremstyle{remark}
\begin{document}

\begin{htmlabstract}
The author constructed a spectral sequence strongly converging to
h<sub>*</sub>(&Omega;<sup>n</sup>&Sigma;<sup>n</sup> X) for any homology
theory in [Topology 33 (1994) 631&ndash;662]. In this note, we prove
that the E<sup>1</sup>-term of the spectral sequence is isomorphic to
the cobar construction, and hence the spectral sequence is isomorphic
to the classical cobar-type Eilenberg--Moore spectral sequence based on
the geometric cobar construction from the E<sup>1</sup>-term. Similar
arguments can be also applied to its variants constructed in [Contemp
Math 293 (2002) 299&ndash;329].
\end{htmlabstract}

\begin{abstract}
The author constructed a spectral sequence strongly converging to
$h_*(\Omega^n\Sigma^n X)$ for any homology theory in \cite{Tamaki94}. In
this note, we prove that the $E^1$--term of the spectral sequence
is isomorphic to the cobar construction, and hence the spectral
sequence is isomorphic to the classical cobar-type Eilenberg--Moore
spectral sequence based on the geometric cobar construction from the
$E^1$--term. Similar arguments can be also applied to its variants
constructed in \cite{Tamaki02}.
\end{abstract}

\maketitle

\section{Introduction}
\label{Introduction}

 In \cite{Tamaki94}, the author introduced a filtration
 $\{F_{-s}\mathcal{C}_n(j)\}_{s\ge 0}$
 on the space of little cubes $\mathcal{C}_n(j)$ with
 \begin{align*}
  \emptyset = F_{-j-1}\mathcal{C}_n(j) \subset \cdots
  F_{-s-1}\mathcal{C}_n(j) \subset F_{-s}\mathcal{C}_n(j) &\subset
  \cdots \\
  &\subset F_{-1}\mathcal{C}_n(j) = F_0\mathcal{C}_n(j) =
  \mathcal{C}_n(j).
\end{align*}
 This is called the gravity filtration. By using the Snaith splitting,
 \[
  \Omega^n\Sigma^nX \shot \bigvee_{j=1}^{\infty}
  \mathcal{C}_n(j)_{+}\wedge_{\Sigma_j} X^{\wedge j}
 \]
 we obtain a stable filtration on $\Omega^n\Sigma^n X$ and hence a spectral
 sequence for computing $h_*(\Omega^n\Sigma^n X)$. This spectral
 sequence is called the gravity spectral sequence.
 The author proved in \cite{Tamaki94} the following:

 \begin{theorem}
  For any homology theory $h_*(-)$, the gravity filtration induces a
  spectral sequence strongly converging to $h_*(\Omega^n\Sigma^n X)$.

  When $h_*$ is multiplicative and $h_*(\Omega^{n-1}\Sigma^n X)$ is
  $h_*(\ast)$--flat, we have
  \[
  E^2 \cong \Cotor^{h_*(\Omega^{n-1}\Sigma^nX)}(h_*(\ast),h_*(\ast)).
  \]
 \end{theorem}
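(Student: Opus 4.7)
The plan is to treat the two assertions separately, since strong convergence is a general filtration argument while the $E^{2}$--identification requires identifying the $E^{1}$--page with a standard algebraic gadget.

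\textbf{Strong convergence.} First I would reduce to the Snaith summands. Because the Snaith splitting is a stable equivalence, the spectral sequence for $h_{*}(\Omega^{n}\Sigma^{n}X)$ decomposes as a direct sum of the spectral sequences associated to the filtered spectra $\mathcal{C}_{n}(j)_{+}\wedge_{\Sigma_{j}}X^{\wedge j}$ with the filtration induced from $F_{-s}\mathcal{C}_{n}(j)$. For fixed $j$, this filtration is finite:
\[
\emptyset = F_{-j-1}\mathcal{C}_{n}(j)\subset\cdots\subset F_{0}\mathcal{C}_{n}(j)=\mathcal{C}_{n}(j),
\]
so each summand spectral sequence collapses into a strongly convergent one after finitely many pages. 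A direct sum of strongly convergent spectral sequences is strongly convergent, which gives the first claim.

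\textbf{Identification of $E^{1}$.} The heart of the matter is to describe the filtration quotients $F_{-s}\mathcal{C}_{n}(j)/F_{-s-1}\mathcal{C}_{n}(j)$ and assemble them, after smashing with $X^{\wedge j}$ and passing to $\Sigma_{j}$--coinvariants, into something recognizable. My plan would be to exhibit a natural identification
\[
E^{1}_{-s,*}\;\cong\;\bigl(\overline{h_{*}(\Omega^{n-1}\Sigma^{n}X)}\bigr)^{\otimes s}
\]
(where the overline denotes the augmentation ideal and tensor products are taken over $h_{*}(\ast)$), compatible with a differential $d^{1}$ given by the alternating sum of the coproducts on adjacent factors, \emph{provided} that $h_{*}(\Omega^{n-1}\Sigma^{n}X)$ is $h_{*}(\ast)$--flat so that the K\"unneth map is an isomorphism and the iterated tensor product computes the homology of the iterated product. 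This is precisely the degree $-s$ part of the cobar construction $\Omega(h_{*}(\Omega^{n-1}\Sigma^{n}X))$.

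\textbf{From $E^{1}$ to $E^{2}$.} Once the previous identification is in place, taking $h_{*}$ of the differential $d^{1}$ gives the differential of the cobar complex of the coalgebra $h_{*}(\Omega^{n-1}\Sigma^{n}X)$, and by definition the homology of this complex is $\Cotor^{h_{*}(\Omega^{n-1}\Sigma^{n}X)}(h_{*}(\ast),h_{*}(\ast))$, yielding the desired formula for $E^{2}$.

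\textbf{Main obstacle.} The routine parts are the convergence argument and the passage from the cobar complex to $\Cotor$; both are formal once the identification of $E^{1}$ is in hand. The genuine work is the geometric identification of the filtration subquotients $F_{-s}\mathcal{C}_{n}(j)/F_{-s-1}\mathcal{C}_{n}(j)$ with a space (or spectrum) whose homology, after smashing with $X^{\wedge j}$ and taking $\Sigma_{j}$--orbits and summing over $j$, reproduces the $s$--fold tensor power of $\overline{h_{*}(\Omega^{n-1}\Sigma^{n}X)}$ with the correct coproduct--induced differential. This relies on a careful combinatorial analysis of configurations of little $n$--cubes stratified by the relative positions of the first coordinate projections, which is exactly the technical input provided by the construction of the gravity filtration in \cite{Tamaki94}.
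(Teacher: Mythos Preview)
Your outline is sound and, in fact, follows the route this paper takes to its Main Theorem (Theorem~1.2) rather than the original route to the cited Theorem~1.1. Note that the paper does \emph{not} prove the statement you were given: it is quoted from \cite{Tamaki94}, and the original argument there established the $E^{2}\cong\Cotor$ identification without first identifying $E^{1}$ with the cobar complex. What this paper contributes is precisely the $E^{1}$--level identification you propose as your strategy, and then deduces the $E^{2}$--statement as a corollary. So your plan is correct but is the plan of the present paper's stronger theorem, not of the cited result.

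Two clarifications. First, your convergence argument (finite filtration on each Snaith summand) is exactly the mechanism at work; the paper records this without proof in Section~2. Second, your closing sentence misattributes the hard step: the ``careful combinatorial analysis'' of the filtration quotients is \emph{not} in \cite{Tamaki94} but is the content of Section~3 here. The specific ideas needed---which you have not supplied---are (i) replacing $F_{-s}\mathcal{C}_n(j)/F_{-s-1}\mathcal{C}_n(j)$ by the quotient of \emph{horizontally decomposable} cubes $\mathcal{D}_n^s(j)/\mathcal{D}_n^{s+1}(j)$ via a radius-shrinking homotopy controlled by the functions $u_s$ and $\sigma_s$; (ii) passing to cubes with fixed first-coordinate radius $\varepsilon$ and recognizing the resulting quotient as a permutohedron-type cell smashed with $\mathcal{C}_{n-1}(j)_+$; and (iii) reading off $d^1$ from the NDR structure and matching it with the Snaith-splitting description of the coproduct. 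Your proposal correctly names this as the main obstacle but does not indicate how to overcome it; the paper's Section~3 supplies exactly these missing steps.
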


 Later in \cite{Tamaki02}, the author constructed
 spectral sequences for fibrations related to iterated Freudenthal
 suspensions by adopting the gravity filtration.

 The Eilenberg--Moore spectral sequence constructed by the (algebraic or
 geometric) cobar construction
 (see Eilenberg--Moore \cite{Eilenberg-Moore66-1,Eilenberg-Moore66-2}, Rector \cite{Rector70}
 and Dwyer \cite{Dwyer74,Dwyer75}), the so-called cobar-type Eilenberg--Moore
 spectral sequence, also has its $E^2$--term isomorphic to $\Cotor$.
 A natural question is if the gravity spectral sequence is isomorphic to
 the cobar-type Eilenberg--Moore spectral sequence.

 In the case of the cobar-type Eilenberg--Moore spectral sequence for the
 path-loop fibration
 \[
  \Omega X \longrightarrow PX \longrightarrow X,
 \]
 the $E^1$--term is given by the (algebraic) cobar construction, ie
 \[
  E^1_{-s,\ast} \cong \left(\Sigma^{-1}\tilde{h}_*(X)\right)^{\otimes s},
 \]
 if $h_*$ is multiplicative and $h_*(X)$ is $h_*(\ast)$--flat.

The purpose of this note is to prove the following theorem.
%that the $E^1$--term of the
%gravity spectral sequence in  is isomorphic
%to the algebraic cobar construction, and hence
%the gravity spectral sequence is isomorphic to the cobar-type
%Eilenberg--Moore spectral sequence from the $E^1$--term.
\begin{theorem}
 \label{MainTheorem}
 The $E^1$--term of the gravity spectral sequence \cite{Tamaki94} is isomorphic to the
 cobar construction, ie
 \[
 E^1_{-s,\ast} \cong
 \left(\Sigma^{-1}\tilde{h}_*(\Omega^{n-1}\Sigma^nX)\right)^{\otimes s}
 \]
 as chain complexes, if $h_*$ is multiplicative and
 $h_*(\Omega^{n-1}\Sigma^n X)$ is $h_*(\ast)$--flat. Hence the gravity
 spectral sequence is isomorphic to the classical cobar-type
 Eilenberg--Moore spectral sequence as spectral sequences.
\end{theorem}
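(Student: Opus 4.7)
The plan is to work directly with the geometry of the gravity filtration, compute the associated graded spaces explicitly, and only then pass to homology.

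First I would unpack $F_{-s}\mathcal{C}_n(j)$. From the fact that the filtration has length $j$ and that the expected answer refers to $\Omega^{n-1}\Sigma^n X$, I anticipate that $F_{-s}\mathcal{C}_n(j)$ consists of those configurations of $j$ little $n$-cubes whose projections to the first coordinate axis cluster into at most $s$ disjoint ``slabs''. The open stratum $F_{-s}\mathcal{C}_n(j) \setminus F_{-s-1}\mathcal{C}_n(j)$ should then fiber over the open simplex of $(s-1)$ separating hyperplanes with fiber a disjoint union, indexed by ordered compositions $j = j_1 + \cdots + j_s$, of products $\mathcal{C}_{n-1}(j_1)\times\cdots\times\mathcal{C}_{n-1}(j_s)$ (each slab, rescaled in the first coordinate, is a little $(n{-}1)$-cube configuration inside that slab). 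The $s$ open dimensions coming from the hyperplane positions will contribute the suspension factors $\Sigma^s$ in the filtration quotients.

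Second, feeding this into the Snaith splitting stably identifies
\[
F_{-s}\mathcal{C}_n(j)_+\wedge_{\Sigma_j} X^{\wedge j} / F_{-s-1}\mathcal{C}_n(j)_+\wedge_{\Sigma_j}X^{\wedge j}
\]
with a $\Sigma^s$-suspension of a wedge, indexed by $j_1+\cdots+j_s = j$, of smash products $\bigwedge_{i=1}^s \mathcal{C}_{n-1}(j_i)_+\wedge_{\Sigma_{j_i}} X^{\wedge j_i}$. Summing over $j$ and reapplying the Snaith splitting to package $\bigvee_{j\ge 1}\mathcal{C}_{n-1}(j)_+\wedge_{\Sigma_j}X^{\wedge j}$ as (stably) the reduced piece of $\Omega^{n-1}\Sigma^n X$, I obtain
\[
F_{-s}/F_{-s-1} \simeq \Sigma^s\bigl(\tilde{\Omega^{n-1}\Sigma^n X}\bigr)^{\wedge s}
\]
stably.

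Third, I apply $h_*$. The flatness hypothesis on $h_*(\Omega^{n-1}\Sigma^n X)$ lets a Künneth isomorphism run, so the $s$-fold smash becomes an $s$-fold tensor product over $h_*(\ast)$, and the $\Sigma^s$ translates into the $s$-fold $\Sigma^{-1}$ shifts needed for the cobar construction:
\[
E^1_{-s,*} \cong \bigl(\Sigma^{-1}\tilde{h}_*(\Omega^{n-1}\Sigma^n X)\bigr)^{\otimes s}.
\]

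Finally, for the $d^1$ identification I must inspect the attaching map $F_{-s}/F_{-s-1}\to \Sigma(F_{-s+1}/F_{-s})$. Geometrically this comes from the boundary of the simplex of separating hyperplanes, where two adjacent slabs merge into one. Under the identification of each slab with an $\mathcal{C}_{n-1}$-configuration, merging slabs $i$ and $i+1$ amounts, stably, to the multiplication $\Omega^{n-1}\Sigma^n X\wedge \Omega^{n-1}\Sigma^n X\to \Omega^{n-1}\Sigma^n X$ applied in the $i$-th and $(i{+}1)$-st tensor slots. Dually (since the boundary adjoins to a lower filtration), this is the coproduct of the coalgebra $h_*(\Omega^{n-1}\Sigma^n X)$, and summed with alternating signs over $i=1,\dots,s-1$ plus the two end faces (which produce the counit/augmentation terms), it is precisely the cobar differential. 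Since both the gravity spectral sequence and the cobar-type Eilenberg--Moore spectral sequence are determined from $E^1$ onward by their first differential and converge strongly to $h_*(\Omega^n\Sigma^n X)$, agreement of $(E^1,d^1)$ yields an isomorphism of spectral sequences.

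The main obstacle is the last step: writing down the attaching maps between gravity strata concretely enough to recognize the James-style comultiplication, with correct signs and Koszul conventions. The preceding geometric identification of the filtration quotients is essentially a careful but routine exercise using the product structure of the little cubes operad in the first coordinate; the signs coming from the simplex of hyperplanes and the induced boundary orientation are where I expect the work to concentrate.
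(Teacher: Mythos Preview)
Your outline has the right overall shape, but there are two genuine gaps, one geometric and one formal.

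\textbf{The filtration quotients are not a fiber bundle over a simplex.} Your picture of the open stratum $F_{-s}\mathcal{C}_n(j)\setminus F_{-s-1}\mathcal{C}_n(j)$ as fibering over the open $(s{-}1)$--simplex of separating hyperplanes is exactly the intuition one wants, but it fails as stated: a configuration in this stratum need not be horizontally separable into $s$ slabs at all, and when it can be grouped into $s$ gravity--stable subsets, the grouping is typically not unique. The paper gives the explicit example of three $2$--cubes in $F_{-2}\mathcal{C}_2(3)\setminus F_{-3}\mathcal{C}_2(3)$ admitting two different $2$--group decompositions; the naive map therefore lands in a \emph{product} over decompositions, not a wedge. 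Resolving this is the main technical work of the paper: one passes to the subspace $\mathcal{D}_n^s(j)$ of horizontally decomposable cubes, builds a deformation retraction of $F_{-s}\mathcal{C}_n(j)/F_{-s-1}\mathcal{C}_n(j)$ onto $\mathcal{D}_n^s(j)/\mathcal{D}_n^{s+1}(j)$ using the overlap functions $u_s$ and a radius--shrinking homotopy, and then introduces a further auxiliary space $\mathcal{C}_n^{\varepsilon}(j)$ of cubes with fixed first radius together with a permutohedron--type polytope argument to produce the sphere factors. None of this is routine, and your sketch skips it.

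\textbf{Agreement of $(E^1,d^1)$ does not determine the spectral sequence.} Your final sentence asserts that isomorphic $(E^1,d^1)$ forces an isomorphism of spectral sequences. This is false without a map between the underlying filtered objects (or exact couples): two spectral sequences can share $E^1$ and $d^1$, hence $E^2$, yet have different $d^2$. The paper does not argue this way. Instead it embeds both constructions in Larry Smith's framework of $h_*^B$--displays over $B=\Omega^{n-1}\Sigma^n X$: Rector's geometric cobar construction is shown to arise from an injective display, and the gravity filtration on $E_n(CX,X)$ is shown (via \fullref{FilteredFibration}) to give another injective display of the same object. Smith's uniqueness theorem then produces the isomorphism from $E^2$ onward. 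This comparison argument, occupying Sections~\ref{OverB}--\ref{Comparison}, is essential and has no counterpart in your proposal.

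A minor point: your connecting map goes the wrong way. The differential $d^1$ runs $E^1_{-s}\to E^1_{-s-1}$, i.e.\ from $F_{-s}/F_{-s-1}$ to $\Sigma F_{-s-1}/F_{-s-2}$, so geometrically one is \emph{splitting} a slab into two, not merging. This is why it matches the coproduct on $h_*(\Omega^{n-1}\Sigma^n X)$ directly, without any duality step.
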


This result is useful for practical applications of the gravity spectral
sequence and simplifies the arguments in the computation performed in
the last section of \cite{Tamaki02}.

This result also gives us a ``geometric model'' for the cobar
differential. Note that the spectral sequence splits into a direct sum
of small spectral sequences each of which is induced from the filtration
on $\mathcal{C}_n(j)$ for $j = 1, 2, \cdots$. By definition, the $d^1$
differential on $\mathcal{C}_n(j)$ part is given by the following
composition
\begin{multline*}
F_{-s}\mathcal{C}_n(j)/F_{-s-1}\mathcal{C}_n(j) \simeq
F_{-s}\mathcal{C}_n(j)\cup CF_{-s-1}\mathcal{C}_n(j) \\
\longrightarrow
\Sigma F_{-s-1}\mathcal{C}_n(j) \longrightarrow \Sigma
F_{-s-1}\mathcal{C}_n(j)/F_{-s-2}\mathcal{C}_n(j)
\end{multline*}
where the NDR representation of
$(F_{-s}\mathcal{C}_n(j), F_{-s-1}\mathcal{C}_n(j))$,
which has an
explicit description in terms of the centers and radii of little cubes,
gives the first homotopy equivalences.

In particular, as the referee pointed out, in the case of $n=2$, the
$d^1$ differential is given in terms of shuffles for smash products of
$\Sigma X$. Note that the decomposition of the permutation
representation
\[
 \R^j \cong \left\{(b_1,\cdots,b_j)\in\R^j\left|\sum
 b_k=0\right.\right\}\oplus \{(t,\cdots,t)\mid t\in\R\}
\]
allows us to associate a map
\[
 s \co (\Sigma X)^{\wedge j} \longrightarrow (\Sigma X)^{\wedge j}
\]
to an element $s$ of the group ring $\Z[\Sigma_j]$ which induces the
action of $s$ in homology.
Let $s_{i,j} \in \Z[\Sigma_{i+j}]$ be the sum of all
$(i,j)$--shuffles.

With this notation, we see that, when $n=2$, the $d^1$ differential
\[
 d^1_{s,*} \co E^1_{-s,*} \cong (\Sigma^{-1}T(\tilde{h}_{*}(\Sigma
 X)))^{\otimes s} \longrightarrow (\Sigma^{-1}T(\tilde{h}_{*}(\Sigma
 X)))^{\otimes s+1} \cong E^1_{-s-1,*}
\]
is induced by the following map:
\begin{multline}
 \sum_{m=1}^s\sum_{\ell=1}^{j_m-1} 1\wedge \cdots\wedge
 s_{\ell,j_m-\ell}\wedge \cdots\wedge 1 \co  (\Sigma X)^{\wedge j_1}\wedge
 \cdots \wedge (\Sigma X)^{\wedge j_s} \longrightarrow \\
 \bigvee_{(k_1,\cdots,k_{s+1})} (\Sigma X)^{\wedge k_1}\wedge \cdots
 \wedge (\Sigma X)^{\wedge k_{s+1}},
 \label{n=2}
\end{multline}
where the wedge sum in the range is taken over
$(s+1)$--tuples of positive integers $(k_1,\cdots,k_{s+1})$ with
\[
 k_1+\cdots+k_{s+1} = j_1+\cdots+j_s.
\]
This paper is organized as follows: we recall the definition of the
gravity filtration together with the basic properties of the little cubes
operad in \fullref{GravityFiltration}. The $E^1$--term of the gravity
filtration is analyzed in the first half of \fullref{Decompose} and then
it is proved that the $E^1$--term and the first differential of the
gravity spectral sequence coincides with those of the classical
Eilenberg--Moore spectral sequence in the rest of \fullref{Decompose}. In
\fullref{OverB}, we recall Rector's and Larry Smith's constructions in
order to prove that their spectral sequences are isomorphic to the
gravity spectral sequence from the $E^1$--terms in \fullref{Comparison}.

\textbf{Acknowledgements}\qua This paper should be considered as a
complement to \cite{Tamaki94}, which was a part of the author's
PhD\ thesis supervised by Fred Cohen. The author is grateful to him
for asking about the $E^1$--term of the gravity spectral sequence.
The result in this paper would not have come out without
his persuasion. The author would also like to thank the referee for
pointing out that the $d^1$ differential for $\Omega^2\Sigma^2X$ could
be written in terms of shuffles on $(\Sigma X)^{\wedge j}$.

\section{The gravity filtration}
\label{GravityFiltration}

Let us first recall the construction of the gravity spectral sequence in
\cite{Tamaki94}.

\begin{definition}
 A little $n$--cube is an embedding
\[
 c \co  [-1,1]^n \hookrightarrow [-1,1]^n
\]
\[
 c = \ell_1\times\cdots\times\ell_n  \leqno{\hbox{of the form}}
\]
 where each $\ell_i$ is an orientation preserving affine embedding
 \[
 \ell_i \co  [-1,1] \longrightarrow [-1,1].
 \]
 The space of little $n$--cubes (with compact-open topology) is denoted
 by $\mathcal{C}_n(1)$.
\end{definition}

Note that a $1$--cube $c$ is determined by its center $C=c(0)$ and radius
$R=c(1)-c(0)$.

\begin{definition}
 \label{SpaceofCubes}
 The space of $j$ little $n$--cubes $\mathcal{C}_n(j)$ is defined by
 \[
 \mathcal{C}_n(j) = \left.\left\{(c_1,\cdots,c_j)\in \mathcal{C}_n(1)^j
 \right| \Ima c_i\cap \Ima c_k = \emptyset \ (i\neq k)\right\},
 \]
 where $\Ima c_i = c_i((-1,1)^n)$.
 The symmetric group of $j$ letters $\Sigma_j$ acts on
 $\mathcal{C}_n(j)$ by permuting cubes.

 Note that, for any finite set $S$, the space of $n$--cubes indexed in
 $S$, $\mathcal{C}_n(S)$, is defined. Thus $\mathcal{C}_n$ can be
 regarded as a contravariant functor
 \[
  \mathcal{C}_n \co  \FiniteSets \longrightarrow \Spaces
 \]
 from the category of finite sets and injective maps to the category of
 topological spaces.
\end{definition}

The gravity filtration on $\mathcal{C}_n(j)$ is defined as follows.

\begin{definition}
 Let $F_0\mathcal{C}_n(j) = \mathcal{C}_n(j)$. For $s\ge 1$, define
 \begin{eqnarray*}
  (c_1,\cdots,c_j) \in F_{-s}\mathcal{C}_n(j)
   & \Longleftrightarrow & \textrm{We need to decompose the set
   $\{c_1,\cdots,c_j\}$ into} \\
  & & \textrm{at least $s$ disjoint subsets in order to make} \\
  & & \textrm{each group ``stable under gravity''.}
 \end{eqnarray*}
 We say a collection of little $n$--cubes $\{c_{i_1},\cdots, c_{i_k}\}$ is
 stable under
 gravity if the center of the first coordinate of each cube is contained
 in the images of the first coordinate of other cubes.
For example, the two $2$-cubes in the picture below are stable under gravity.

 \begin{center}
  \begin{picture}(100,100)(0,00)
   \put(0,0){\framebox(100,100)[bl]{}}
   \put(35,20){\framebox(40,20){}}
   \put(55,30){\circle*{3}}
   \put(50,50){\framebox(40,25){}}
   \put(70,62.5){\circle*{3}}

   \thinlines
   \put(55,0){\line(0,1){100}}
   \put(70,0){\line(0,1){100}}
  \end{picture}
 \end{center}

 In other words, the vertical hyperplane through the center of a cube
 must intersect with the interior of other cubes.
\end{definition}

It is more useful to describe the above filtration in terms of
functions which measure overlaps of the first coordinates of cubes.

\begin{definition}
 Let $b$ be a little $1$--cube with center $C$ and radius $R$. For $x
 \in (-1,1)$, define
 \[
 d(x,b) = \frac{2R-||C+R-x|-|C-R-x||}{2R}.
 \]
 For $c_1,c_2 \in \mathcal{C}_n(1)$, define
 \[
 \dis(c_1,c_2) = \min\{d(c'_1(0),c'_2),d(c'_2(0),c'_1)\},
 \]
 where $c'_i$ is the first coordinate of $c_i$.
\end{definition}

\begin{definition}
 For $S \subset \{1,\cdots,j\}$ and $\mathbf{c}\in \mathcal{C}_n(j)$,
 define
 \[
 \OL(\mathbf{c},S) = \min\{\dis(c_k,c_{\ell})|k,\ell\in S\}.
 \]
 For a partition $P : S_1\amalg\cdots\amalg S_s = \{1,\cdots,j\}$, define
 \[
 \MOL(\mathbf{c},P) = \min_k\{\OL(\mathbf{c},S_k)\}.
 \]
 And for $\mathbf{c} \in F_{-s}\mathcal{C}_n(j)$, define
 \[
 u_s(\mathbf{c}) = \max\{\MOL(\mathbf{c},P)| P: \text{ partition into $s$ subsets}\}.
 \]
\end{definition}

Recall that the configuration space of $j$ points in $\R^n$, $F(\R^n,j)$,
is $\Sigma_j$--equivariantly homotopy equivalent to
$\mathcal{C}_n(j)$. And it is possible and seems more natural to define a
filtration on $F(\R^n,j)$ by the number of distinct first
coordinates. In fact, this filtration on $F(\R^n,j)$ was the origin of
the gravity filtration on $\mathcal{C}_n(j)$. The following fact is an
essential difference between these two filtrations and is proved in
\cite{Tamaki94}.

\begin{lemma}
 \label{NDR}
 The map
 \[
 u_s \co  F_{-s}\mathcal{C}_n(j) \longrightarrow [0,1]
 \]
 is continuous and
 \[
 u_s^{-1}(0) = F_{-s-1}\mathcal{C}_n(j).
 \]
 Furthermore there exists a homotopy
 \[
 h_s \co  F_{-s}\mathcal{C}_n(j)\times I \longrightarrow F_{-s}\mathcal{C}_n(j)
 \]
 with which $(h_s,\tilde{u}_s)$ is an NDR representation for
 $(F_{-s}\mathcal{C}_n(j),F_{-s-1}\mathcal{C}_n(j))$, where
 $\tilde{u}_s = m\circ u_s$ and the function
 \[
  m \co  [0,1] \longrightarrow [0,1]
 \]
 \[
  m(t) = \begin{cases}
      2t & 0\le t\le \frac{1}{2} \\
      1 & \frac{1}{2} \le t \le 1.
     \end{cases}\leqno{\hbox{is defined by}}
 \]
\end{lemma}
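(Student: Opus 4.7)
The plan is to verify the three assertions in turn. First, continuity of $u_s$ follows by inspection: $d(x,b)$ is an arithmetic combination of absolute values, so continuous in $(x,b)$, and $\dis$, $\OL$, $\MOL$, $u_s$ are obtained inductively by taking finite minima (over pairs in a subset, over blocks of a partition) or maxima (over the finitely many partitions of $\{1,\ldots,j\}$ into $s$ blocks), which preserve continuity.

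Second, for the identification $u_s^{-1}(0) = F_{-s-1}\mathcal{C}_n(j)$, the crucial computation is that $d(x,b) > 0$ iff $x$ lies strictly inside the first-coordinate image $(C-R, C+R)$ of $b$: outside the closed image one has $||C+R-x| - |C-R-x|| = 2R$, forcing $d = 0$, whereas inside $d = 1 - |C-x|/R > 0$. Hence $\dis(c_k, c_\ell) > 0$ exactly when the pair is mutually stable under gravity. Unwinding definitions, $u_s(\mathbf{c}) > 0$ is equivalent to the existence of a partition into exactly $s$ blocks with every block pairwise stable, i.e., $\mathbf{c} \in F_{-s}\mathcal{C}_n(j) \setminus F_{-s-1}\mathcal{C}_n(j)$.

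The third and most delicate part is constructing the homotopy $h_s$. I would deform $\mathbf{c}$ by scaling each cube's first-coordinate radius about its center by a factor $\rho_t(\mathbf{c}) \in (0,1]$, with $\rho_0 \equiv 1$ and $\rho_t \equiv 1$ on $F_{-s-1}\mathcal{C}_n(j)$. Since shrinking first-coordinate images can only decrease every $\dis(c_k, c_\ell)$, the deformation remains inside $F_{-s}\mathcal{C}_n(j)$, and once enough shrinkage has occurred, some pair in every block of every $s$-block partition acquires $\dis = 0$, driving $u_s$ to zero. The main obstacle is calibrating $\rho_t$ to vary continuously in $\mathbf{c}$ without committing to a combinatorially fragile choice of optimal partition, while remaining bounded away from zero to preserve the little-cube condition, and at $t=1$ forcing the new $u_s$ to vanish on the locus where $\tilde{u}_s < 1$ (equivalently $u_s < 1/2$). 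The resolution is to tie $\rho_t(\mathbf{c})$ directly to $u_s(\mathbf{c})$ via a continuous formula with the prescribed vanishing behavior, after which the NDR axioms reduce to a direct comparison of shrunken versus original $\dis$ values.
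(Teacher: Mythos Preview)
The paper does not prove this lemma here; it is quoted from \cite{Tamaki94}, so there is no in-paper argument to compare against directly. Your treatment of the first two assertions---continuity of $u_s$ via iterated finite $\min/\max$, and the identification $u_s^{-1}(0)=F_{-s-1}\mathcal{C}_n(j)$ via the formula $d(x,b)=1-|C-x|/R$ on the open image---is correct and essentially forced by the definitions.

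For the homotopy $h_s$, your idea of uniformly shrinking first-coordinate radii by a factor $\rho_t(\mathbf{c})$ tied to $u_s(\mathbf{c})$ is sound, but you stop just short of writing the formula and verifying it, which is where the content lies. The missing quantitative step is this: if $d(x,b)=\beta$ and $b$ has radius $R$, then after replacing $R$ by $\rho R$ one gets $\max\bigl(0,\,1-(1-\beta)/\rho\bigr)$, which vanishes exactly when $\rho\le 1-\beta$. Now for \emph{every} $s$-block partition $P$ one has $\MOL(\mathbf{c},P)\le u_s(\mathbf{c})=\alpha$, so each $P$ contains a pair with $\dis\le\alpha$; shrinking by $\rho=1-\alpha$ therefore forces $u_s$ of the new configuration to vanish. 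Hence a schedule such as $\rho_t(\mathbf{c})=1-t\min\bigl(u_s(\mathbf{c}),\tfrac12\bigr)$ works: it is the identity on $F_{-s-1}\mathcal{C}_n(j)$, stays bounded away from $0$, preserves $F_{-s}\mathcal{C}_n(j)$ (shrinking can only raise the minimal number of stable groups), and at $t=1$ lands in $F_{-s-1}\mathcal{C}_n(j)$ whenever $u_s(\mathbf{c})<\tfrac12$, i.e.\ $\tilde u_s(\mathbf{c})<1$. One caution: later in \S\ref{Decompose} the paper invokes the stronger Str{\o}m condition $h_s(\mathbf{c},t)\in F_{-s-1}\mathcal{C}_n(j)$ for $\tilde u_s(\mathbf{c})<t$, which your schedule as written does not give; a mildly accelerated shrinking (e.g.\ replacing $t$ by $\min(1,2t)$ in the formula) recovers it from the same estimate.
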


The following property of $u_q$ is not used in \cite{Tamaki94}, but
turns out to be very useful for identifying the $E^1$--term.

\begin{lemma}
 \label{VerticallyAligned}
 $u_s(\mathbf{c})=1$ if and only if
 $\mathbf{c}$ consists of $s$ piles of cubes each of which consists of cubes
 whose centers are lined up on a single vertical line (hyperplane).
\end{lemma}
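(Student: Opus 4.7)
The plan is to unwind the nested definitions of $u_s$, $\MOL$, $\OL$ and $\dis$, and to reduce the statement to the one-variable question of when the function $d(x,b)$ attains its maximum value~$1$. Once that analytic point is settled, the rest of the lemma is pure bookkeeping with $\min$ and $\max$.

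The key computation is the following. For a $1$--cube $b$ with center $C$ and radius $R>0$, the formula
\[
 d(x,b)=\frac{2R-||C+R-x|-|C-R-x||}{2R}
\]
equals~$1$ if and only if $|C+R-x|=|C-R-x|$, i.e.\ precisely when $x$ is equidistant from the two endpoints $C\pm R$ of $b$; on the real line this forces $x=C$. Consequently, $\dis(c_1,c_2)=\min\{d(c_1'(0),c_2'),d(c_2'(0),c_1')\}$ equals~$1$ exactly when the first-coordinate projections $c_1'$ and $c_2'$ share a common center, i.e.\ when the centers of $c_1$ and $c_2$ lie on a common hyperplane perpendicular to the first coordinate axis.

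Given this, the lemma is an unraveling. For a subset $S\subset\{1,\dots,j\}$, the equality $\OL(\mathbf{c},S)=1$ holds if and only if every pair of cubes indexed by $S$ has the same first-coordinate center, which is equivalent to all of their centers lying on a single vertical hyperplane. For a partition $P=S_1\amalg\cdots\amalg S_s$, the equality $\MOL(\mathbf{c},P)=\min_k\OL(\mathbf{c},S_k)=1$ then means that each $S_k$ is such a pile. Since the values of $d$ lie in $[0,1]$ one always has $u_s(\mathbf{c})\le 1$, so $u_s(\mathbf{c})=\max_P\MOL(\mathbf{c},P)=1$ iff some partition of $\{1,\dots,j\}$ into $s$ vertically aligned piles exists, which is the statement of the lemma.

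I do not expect any serious obstacle: the only step requiring genuine verification is the elementary solution of $|C+R-x|=|C-R-x|\iff x=C$, and everything else is definition chasing through the layers $\dis\rightsquigarrow\OL\rightsquigarrow\MOL\rightsquigarrow u_s$.
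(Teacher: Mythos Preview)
Your argument is correct; the paper states this lemma without proof, and what you wrote is exactly the intended elementary verification. The only step requiring care is the one you isolate: $d(x,b)\le 1$ with equality iff $x=C$, and your solution of $|C+R-x|=|C-R-x|$ on the real line handles it.
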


 By using the base point relation, we can form a single space by gluing
 $\mathcal{C}_n(j)\times_{\Sigma_j} X^j$ together.

\begin{definition}
 For a pointed space $X$, define
 \[
 C_n(X) =
 \bigg(\coprod_{j=1}^{\infty}\mathcal{C}_n(j)\times_{\Sigma_j}X^j\bigg)
 \hspace{-4pt}\raisebox{-5pt}{$\bigg/$}\hspace{-2pt}\raisebox{-12pt}{$\sim$}
 \]
 where the relation $\sim$ is given by
 \[
 (c_1,\cdots, c_j;x_1,\cdots,x_j) \sim
 (c_1,\cdots,\hat{c_i},\cdots,c_j;x_1,\cdots,\hat{x_i},\cdots,x_j),
 \]
 if $x_i = \ast$.
\end{definition}

$C_n(X)$ is an approximation to $\Omega^n\Sigma^nX$ up to a weak equivalence.

\begin{theorem}[Approximation Theorem \cite{May72}]
 For a path-connected space $X$ with nondegenerate base point, we have
 the following natural weak equivalence
 \[
 C_n(X) \whot \Omega^n\Sigma^nX.
 \]
\end{theorem}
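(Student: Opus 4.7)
The plan is to follow May's original strategy from \cite{May72}. I would first construct the natural structural map
\[
\alpha_n \co C_n(X) \longrightarrow \Omega^n\Sigma^n X
\]
by sending a representative $(c_1,\dots,c_j;x_1,\dots,x_j)$ to the based map $S^n=[-1,1]^n/\partial[-1,1]^n\to \Sigma^n X$ that equals $t\mapsto c_i^{-1}(t)\wedge x_i$ on $\Ima c_i$ and the basepoint on the complement. The disjointness condition in the definition of $\mathcal{C}_n(j)$ and the basepoint identifications built into $C_n(X)$ make $\alpha_n$ well defined and continuous; naturality in $X$ is immediate.

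To prove that $\alpha_n$ is a weak equivalence I would induct on $n$. The base case $n=1$ is essentially James' theorem: $C_1(X)$ is homeomorphic to the James reduced product $JX$, and $JX \simeq \Omega\Sigma X$ for path-connected $X$. For the inductive step I would introduce a scanning projection
\[
\sigma \co C_n(X) \longrightarrow \Omega C_{n-1}(\Sigma X)
\]
that projects each little $n$--cube to its first $n-1$ coordinates while recording the last coordinate as a suspension variable. Using the inductive hypothesis $C_{n-1}(\Sigma X) \simeq \Omega^{n-1}\Sigma^n X$ and the adjunction $\Omega^n\Sigma^n X \cong \Omega\Omega^{n-1}\Sigma^n X$, the composite $\Omega\alpha_{n-1}\circ \sigma$ is homotopic to $\alpha_n$, so it suffices to show $\sigma$ is a weak equivalence.

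The main obstacle is precisely this scanning statement. I would approach it by filtering both sides by analogues of the word-length filtration on $C_n(X)$, whose $j$--th subquotient is the Snaith summand $\mathcal{C}_n(j)_{+}\wedge_{\Sigma_j}X^{\wedge j}$, and then verifying that $\sigma$ restricts to a quasi-fibration with fibers of the expected homotopy type on each filtration stratum. The Dold--Thom quasi-fibration criterion is where path-connectedness of $X$ enters essentially: it guarantees that the fibers of $\sigma$ over distinct configurations in $C_{n-1}(\Sigma X)$ are weakly equivalent, which is false in general for disconnected $X$. Granted the quasi-fibration property, a five-lemma comparison of the long exact sequences of $\sigma$ and of the path-loop fibration over $\Omega^{n-1}\Sigma^n X$ closes the induction and yields the claimed weak equivalence $\alpha_n\co C_n(X) \whot \Omega^n\Sigma^n X$.
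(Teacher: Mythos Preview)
The paper does not prove this theorem at all; it is quoted as background from May's monograph \cite{May72} and used as a black box. So there is no ``paper's own proof'' to compare against beyond the citation.

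As for your sketch on its own terms: the construction of $\alpha_n$ and the $n=1$ base case are fine, but the inductive step you outline is not May's argument. May does not build a scanning map $\sigma\co C_n(X)\to\Omega C_{n-1}(\Sigma X)$ and induct on $n$; his proof in \cite{May72} proceeds by exhibiting $\alpha_n$ as a map of monads and then running a Dold--Thom quasi-fibration argument directly on the word-length filtration of $C_n(X)$ versus that of $\Omega^n\Sigma^n X$, with no intermediate passage through $C_{n-1}(\Sigma X)$. The scanning route you describe is closer in spirit to Segal's approach (and to later work of McDuff and others); it can be made to work, but your description of $\sigma$ (``project to the first $n-1$ coordinates while recording the last as a suspension variable'') is too loose to be a definition---as written it does not land in $C_{n-1}(\Sigma X)$, since cubes with overlapping projections would collide. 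A genuine scanning map records, for each time $t$ in the last coordinate, the intersection of the configuration with the slab near height $t$, and one must argue carefully that this is continuous and has the right target. If you want to claim you are following \cite{May72}, you should replace the scanning induction with May's direct quasi-fibration comparison; if you prefer scanning, cite Segal instead and define $\sigma$ precisely.
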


Unfortunately the gravity filtration is not compatible with the base
point relation in the definition of $C_n(X)$. Fortunately, however, we
can introduce a stable filtration on $C_n(X)$, thanks to the following
famous theorem.

\begin{theorem}[Snaith Splitting \cite{Snaith74}]
 \label{Snaith}
 For a path-connected space $X$ with a nondegenerate base point, we have the
 following natural weak equivalence in the stable homotopy category,
 \begin{equation}
  C_n(X) \shot \bigvee_{j=1}^{\infty}
   \mathcal{C}_n(j)_{+}\wedge_{\Sigma_j}X^{\wedge j}.
   \label{SSforC_n}
 \end{equation}
\end{theorem}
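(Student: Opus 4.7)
The plan is to work one Snaith summand at a time: I compute the stable homotopy type of
\begin{equation*}
\bigvee_{j \geq s}\frac{F_{-s}\mathcal{C}_n(j)}{F_{-s-1}\mathcal{C}_n(j)} \wedge_{\Sigma_j} X^{\wedge j}
\end{equation*}
using the NDR representation of \fullref{NDR} together with the top-stratum description of \fullref{VerticallyAligned}, then apply $\tilde h_*$ to match the cobar tensor power, and finally check that $d^1$ corresponds to the cobar differential. First I analyze $u_s^{-1}(1) \subset F_{-s}\mathcal{C}_n(j)$. By \fullref{VerticallyAligned} a point records $s$ piles of cubes aligned on $s$ distinct vertical hyperplanes; reading off first-coordinate centers yields a point of $\mathcal{C}_1(s)$, and the $(n-1)$--dimensional projection of each pile (modulo a contractible space of first-coordinate radii) yields points of $\mathcal{C}_{n-1}(k_m)$ for the pile sizes $k_1,\ldots,k_s$. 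The operad composition
\begin{equation*}
\gamma \co \mathcal{C}_1(s) \times \mathcal{C}_{n-1}(k_1) \times \cdots \times \mathcal{C}_{n-1}(k_s) \longrightarrow \mathcal{C}_n(j), \quad j = k_1 + \cdots + k_s,
\end{equation*}
lands in $F_{-s}\mathcal{C}_n(j)$, and its image over $u_s^{-1}(1)$ identifies that stratum $\Sigma_j$--equivariantly with the disjoint union of these products over ordered compositions of $j$ into $s$ positive parts, where the $\Sigma_j$--action factors through the wreath product $\Sigma_s \wr (\Sigma_{k_1} \times \cdots \times \Sigma_{k_s})$.

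Next, the NDR homotopy $h_s$ retracts $F_{-s}\mathcal{C}_n(j)$ onto $u_s^{-1}(1)$ by pulling each pile's first-coordinate centers onto its common hyperplane, while $\tilde u_s$ provides the cone-like normal direction collapsing $F_{-s-1}\mathcal{C}_n(j)$ to the basepoint. Smashing with $X^{\wedge j}$, taking the $\Sigma_j$--quotient, and wedging over $j \geq s$ yields (after the suspension bookkeeping described below) a stable equivalence
\begin{equation*}
\bigvee_{j \geq s} \frac{F_{-s}\mathcal{C}_n(j)}{F_{-s-1}\mathcal{C}_n(j)} \wedge_{\Sigma_j} X^{\wedge j} \shot \mathcal{C}_1(s)_+ \wedge_{\Sigma_s} \bigl(C_{n-1}(\Sigma X)\bigr)^{\wedge s}.
\end{equation*}
The crucial absorption is that the first-coordinate widths of the piles together with the NDR normal direction supply exactly one suspension direction per pile, which is absorbed $\Sigma_{k_m}$--equivariantly into the factor $X^{\wedge k_m}$ via Snaith's splitting of $C_{n-1}$ and promotes it to $(\Sigma X)^{\wedge k_m}$. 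Because $\mathcal{C}_1(s)$ is free as a $\Sigma_s$--space (with $s!$ contractible components permuted regularly), the construction $\mathcal{C}_1(s)_+ \wedge_{\Sigma_s}(-)$ acts trivially on stable $\Sigma_s$--equivariant data and the target simplifies to $\bigl(C_{n-1}(\Sigma X)\bigr)^{\wedge s}$, which is $\shot (\Omega^{n-1}\Sigma^n X)^{\wedge s}$ by the Approximation Theorem. Applying $\tilde h_*$ and invoking K\"unneth under $h_*(\ast)$--flatness produces
\begin{equation*}
E^1_{-s,\ast} \cong \bigl(\Sigma^{-1}\tilde h_*(\Omega^{n-1}\Sigma^n X)\bigr)^{\otimes s},
\end{equation*}
where the $\Sigma^{-1}$ is the conventional regrading that converts the geometric suspension data into the standard cobar grading.

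For the identification of $d^1$ with the cobar differential, I use the explicit composition displayed in the introduction expressing it as the connecting map of the NDR cofiber triple $F_{-s-2} \subset F_{-s-1} \subset F_{-s}$ followed by the quotient. Under the identifications already in place, this map is induced geometrically by splitting a single one of the $s$ piles into two adjacent sub-piles along the first coordinate; in homology this is the sum over $m = 1, \ldots, s$ of diagonal coproducts on $\tilde h_*(\Omega^{n-1}\Sigma^n X)$ inserted at the $m$--th tensor slot, which is exactly the cobar differential. The shuffle formula for $n = 2$ recorded in the introduction falls out of this description by recognising the diagonal coproduct on $\tilde h_*(\Omega\Sigma^2 X) \cong \tilde h_*(\Sigma X)^{\otimes\bullet}$ as the shuffle map. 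The main obstacle is the stable equivalence in the middle step: careful $\Sigma_j$--equivariant tracking of the single suspension that arises from the combined first-coordinate width of a pile and the NDR normal direction, and verifying that these combine cleanly with Snaith's splitting of $C_{n-1}$ to convert $X$ into $\Sigma X$ on each tensor factor, is the technical heart of the argument.
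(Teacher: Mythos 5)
You have sketched the wrong statement. The theorem in question is the Snaith splitting: the assertion that $C_n(X)$ itself decomposes stably as the wedge of the pieces $D_j^{(n)}(X)=\mathcal{C}_n(j)_+\wedge_{\Sigma_j}X^{\wedge j}$. This is a statement about the word-length filtration of $C_n(X)$ --- the filtration $\mathcal{F}_kC_n(X)$ by the number $j$ of little cubes --- and its substance is that this filtration splits in the stable homotopy category. Your argument never touches that filtration. Instead you analyze the gravity filtration $F_{-s}\mathcal{C}_n(j)$ inside each fixed $\mathcal{C}_n(j)$, identify its filtration quotients with configurations of piles via the NDR structure of \fullref{NDR} and \fullref{VerticallyAligned}, and compute the $E^1$--term and $d^1$ of the gravity spectral sequence. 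That is the content of the paper's main theorem and of \fullref{Decompose}, not of the Snaith splitting; the paper takes the splitting as a black box from \cite{Snaith74} precisely because the gravity filtration is incompatible with the base-point relation, and the splitting is what allows one to impose a stable filtration on $C_n(X)$ in the first place.

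Read as a proof of the statement actually posed, your argument is also circular: in the middle step you invoke ``Snaith's splitting of $C_{n-1}$'' to absorb the suspension coordinate and promote $X^{\wedge k_m}$ to $(\Sigma X)^{\wedge k_m}$, so the splitting is an input, not an output. A genuine proof would have to identify the quotients $\mathcal{F}_jC_n(X)/\mathcal{F}_{j-1}C_n(X)\cong D_j^{(n)}(X)$ of the word-length filtration and then produce stable retractions (or compatible stable sections) splitting the resulting cofibre sequences --- for instance by Snaith's original argument or by stable transfer/power-operation constructions in the style of Cohen, May and Taylor. None of this appears in your write-up, so there is no proof of \fullref{Snaith} here; what you have is an outline of the identification of the gravity $E^1$--term, which belongs to a different theorem.
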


\begin{definition}
 We define
 \[
 F_{-s}C_n(X) = \bigvee_{j=1}^{\infty}
 F_{-s}\mathcal{C}_n(j)_{+}\wedge_{\Sigma_j}X^{\wedge j}.
 \]
\end{definition}

This can be regarded as a filtration on $\Omega^n\Sigma^n X$ in the
stable homotopy category.

We use the following notations:
\begin{eqnarray*}
 D_j^{(n)}(X) & = & \mathcal{C}_n(j)_{+}\wedge_{\Sigma_j} X^{\wedge j}
  \\
 F_{-s}D_j^{(n)}(X) & = & F_{-s}\mathcal{C}_n(j)_{+}\wedge_{\Sigma_j}
  X^{\wedge j}
\end{eqnarray*}
This stable filtration gives a spectral sequence strongly
converging to $h_*(\Omega^n\Sigma^n X)$ for any homology theory
$h_*(-)$, whose $E^1$--term is given by
\begin{eqnarray*}
 E^1_{-s,t} & = & h_{-s+t}(F_{-s}C_n(X),F_{-s-1}C_{n}(X)) \\
 & \cong & \bigoplus_{j\ge 1} h_{-s+t}\left(F_{-s}D_j^{(n)}(X),
                       F_{-s-1}D_j^{(n)}(X)\right) \\
 & \cong & \bigoplus_{j\ge 1} \tilde{h}_{-s+t}\left(F_{-s}D_j^{(n)}(X)/
                       F_{-s-1}D_j^{(n)}(X)\right)
\end{eqnarray*}
since $\smash{\big(F_{-s}D_j^{(n)}(X),F_{-s-1}D_j^{(n)}(X)\big)}$ is an NDR
pair if the base point of $X$ is nondegenerate, thanks to \fullref{NDR}.

\section{Decomposition of cubes}
\label{Decompose}

Let $h_*(-)$ be a homology theory satisfying the strong form of
K\"{u}nneth isomorphism for $\Omega^{n-1}\Sigma^nX$. Then the $E^1$--term
of the classical Eilenberg--Moore spectral sequence for
the path-loop fibration on $\Omega^{n-1}\Sigma^nX$ is isomorphic to
\begin{eqnarray*}
 E^1_{-s,t} & \cong & \tilde{h}_{t}\left(\left(\Omega^{n-1}\Sigma^n
                      X\right)^{\wedge s}\right) \\ 
 & \cong & \tilde{h}_{t}\left(C_{n-1}(\Sigma X)^{\wedge s}\right). 
\end{eqnarray*}
Under the Snaith splitting, we have
\[
 \tilde{h}_{t}\left(C_{n-1}(\Sigma X)^{\wedge s}\right) \cong \bigoplus_{\sum
 i_k=j}\tilde{h}_t\left(D_{i_1}^{(n-1)}(\Sigma X)\wedge\cdots\wedge
 D_{i_s}^{(n-1)}(\Sigma X)\right).
\]
Thus, with the notations in \fullref{GravityFiltration}, all we want to do
is to find a natural homotopy equivalence
\[
  \Sigma^sF_{-s}D_j^{(n)}(X)/F_{-s-1}D_j^{(n)}(X) \simeq
  \bigvee_{\sum i_k=j} D_{i_1}^{(n-1)}(\Sigma X)\wedge\cdots\wedge
  D_{i_s}^{(n-1)}(\Sigma X)
\]
 or a $\Sigma_j$--equivariant homotopy equivalence
 \[
  \Sigma^sF_{-s}\mathcal{C}_n(j)/F_{-s-1}\mathcal{C}_n(j) \simeq 
  \bigvee_{\substack{\amalg S_k=\\\{1,\cdots,j\}}} \!\!\left(\mathcal{C}_{n-1}(S_1)_{+}\wedge
  S^{|S_1|}\right) \wedge\cdots\wedge
  \left(\mathcal{C}_{n-1}(S_s)_{+}\wedge S^{|S_s|}\right)
 \]
where $\mathcal{C}_n(S)$ is the space of little $n$--cubes indexed by the
set $S$ and the wedge sum on the right hand side runs over all
partitions of $\{1,\cdots,j\}$ into nonempty $k$ subsets.

However this is not easy. To understand the difficulty, let us
try to define a map
\[
 \Sigma^2F_{-2}\mathcal{C}_n(3)/F_{-3}\mathcal{C}_n(3) \longrightarrow 
 \bigvee_{\substack{S_1\amalg S_2 =\\\{1,2,3\}}}\left(\mathcal{C}_{n-1}(S_1)_{+}\wedge
 S^{|S_1|}\right) \wedge
 \left(\mathcal{C}_{n-1}(S_2)_{+}\wedge S^{|S_2|}\right).
\]

\begin{center}
   \begin{picture}(100,100)(0,0)
   \small
   \put(0,0){\framebox(100,100)[bl]{}}

   \put(10,80){\framebox(80,15){}}
   \put(50,87.5){\circle*{3}}
   \put(50,0){\line(0,1){100}}
   \put(30,87.5){\makebox(0,0){$c_1$}}

   \put(20,50){\framebox(40,20){}}
   \put(40,60){\circle*{3}}
   \put(40,0){\line(0,1){100}}
   \put(30,60){\makebox(0,0){$c_2$}}

   \put(45,20){\framebox(40,20){}}
   \put(65,30){\circle*{3}}
   \put(65,0){\line(0,1){100}}
   \put(75,30){\makebox(0,0){$c_3$}}
  \end{picture}

\end{center}

Forget about the suspension coordinates. Consider the cubes in the
above picture.
This element belongs to $F_{-2}\mathcal{C}_n(3) -
F_{-3}\mathcal{C}_n(3)$ and there are two ways to decompose it into
two collections of cubes stable under gravity, ie
\begin{eqnarray*}
 \{1,2,3\} & = & \{1,2\}\amalg \{3\} \\
 \{1,2,3\} & = & \{1,3\}\amalg \{2\}.
\end{eqnarray*}
Thus a canonical map we obtain is
\[
 \Sigma^2F_{-2}\mathcal{C}_n(3)/F_{-3}\mathcal{C}_n(3) \longrightarrow 
 \prod_{\substack{S_1\amalg S_2 =\\\{1,2,3\}}} \left(\mathcal{C}_{n-1}(S_1)_{+}\wedge
 S^{|S_1|}\right) \wedge
 \left(\mathcal{C}_{n-1}(S_2)_{+}\wedge S^{|S_2|}\right), 
\]
not into $\bigvee$.

More generally, taking all possible decompositions would give the
following map
\[\Sigma^sF_{-s}\mathcal{C}_n(j)
  \hspace{-6pt}\raisebox{-5pt}{$\bigg/$}\hspace{-8pt}\raisebox{-12pt}{$F_{-s-1}\mathcal{C}_n(j)$}
\!\!\longrightarrow\!\!
%\Sigma^sF_{-s}\mathcal{C}_n(j)/F_{-s-1}\mathcal{C}_n(j) \rightarrow\!\!\!
 \prod_{\substack{\amalg S_k=\\\{1,\cdots,j\}}}\!\! \left(\mathcal{C}_{n-1}(S_1)_{+}\wedge
 S^{|S_1|}\right) \wedge\cdots\wedge
 \left(\mathcal{C}_{n-1}(S_s)_{+}\wedge S^{|S_s|}\right). 
\]
We need to compress the image of this map into
\[
\bigvee_{\amalg S_k=\{1,\cdots,j\}} \left(\mathcal{C}_{n-1}(S_1)_{+}\wedge
S^{|S_1|}\right) \wedge\cdots\wedge
\left(\mathcal{C}_{n-1}(S_s)_{+}\wedge S^{|S_s|}\right).
\]
To this end, our idea is to deform
$\Sigma^sF_{-s}\mathcal{C}_n(j)/F_{-s-1}\mathcal{C}_n(j)$ into a smaller
space of ``decomposable cubes''.
 
G\,Dunn introduced the notion of decomposable cubes in \cite{Dunn88} 
and proved a decomposition of the little $n$--cubes operad
\[
 \mathcal{C}_n \simeq
 \underbrace{\mathcal{C}_1\otimes\cdots\otimes\mathcal{C}_1}_n.
\]
In our case, we need the notion of horizontally decomposable cubes.

\begin{definition}
 Let $\mathcal{D}_n^s(j)$ be the subset of $\mathcal{C}_n(j)$ consisting
 of cubes which are horizontally decomposable into $s$ collections (as in the picture below).
\end{definition}
\begin{center}
 \begin{picture}(100,100)(0,0)
  \put(0,0){\framebox(100,100)[bl]{}}
  
  \put(10,80){\framebox(20,15){}}
  \put(20,50){\framebox(25,20){}}

  \multiput(50,0)(0,5){20}{\line(0,1){2.5}}

  \put(55,20){\framebox(35,20){}}
  \put(60,45){\framebox(30,40){}}
 \end{picture}

\end{center}

More precisely, let
\[
  i_1 \co  \mathcal{C}_1(j) \hookrightarrow \mathcal{C}_n(j)
\]
be the inclusion of the first coordinate given by the multiplication
of the identity \mbox{$(n-1)$--cube}.
\[
  i_1(c_1,\cdots,c_j) = (c_1\times 1_{I^{n-1}},\cdots, c_j\times 1_{I^{n-1}}).
\leqno{\hbox{Namely}}
\]
Then $\mathcal{D}_n^s(j)$ is the image of the
following restriction of the operad structure map
\[
  \gamma \co  i_1(\mathcal{C}_1(s))\times \left(\coprod_{\sum i_k = j}
  \mathcal{C}_n(i_1)\times
  \cdots \times\mathcal{C}_n(i_s)\right) \longrightarrow \mathcal{C}_n(j).
\]
We have the following diagram:
\[
 \begin{diagram}
  \node{\mathcal{D}_n^s(j)} \arrow{e,t}{\subset}
  \node{F_{-s}\mathcal{C}_n(j)} \\
  \node{\mathcal{D}_n^{s+1}(j)} \arrow{n,r}{\cup} \arrow{e,t}{\subset}
  \node{F_{-s-1}\mathcal{C}_n(j)} \arrow{n,r}{\cup} 
 \end{diagram}
\]
We want to show that the inclusion gives a homotopy equivalence
\[
 \mathcal{D}_n^s(j)/\mathcal{D}_n^{s+1}(j)) \simeq
 F_{-s}\mathcal{C}_n(j)/F_{-s-1}\mathcal{C}_n(j).
\]
Note that if $\mathbf{c} = (c_1,\cdots,c_j) \in F_{-s}\mathcal{C}_n(j)$,
then there are at least $s$ cubes $c_{i_1}, \cdots, c_{i_s}$ whose
centers of the first coordinates are distinct.

\begin{center}
  \begin{picture}(100,100)(0,0)
  \small
   \put(0,0){\framebox(100,100)[bl]{}}

   \put(10,80){\framebox(60,15){}}
   \put(40,87.5){\circle*{3}}
   \put(40,0){\line(0,1){100}}
   \put(30,87.5){\makebox(0,0){$c_1$}}

   \put(20,50){\framebox(40,20){}}
   \put(40,60){\circle*{3}}
   \put(40,0){\line(0,1){100}}
   \put(30,60){\makebox(0,0){$c_2$}}

   \put(45,20){\framebox(40,20){}}
   \put(65,30){\circle*{3}}
   \put(65,0){\line(0,1){100}}
   \put(75,30){\makebox(0,0){$c_3$}}
  \end{picture}
\end{center}

Thus by shrinking the radii of the first coordinates of cubes, we can
deform $F_{-s}\mathcal{C}_n(j)$ into $\mathcal{D}_n^s(j)$.
The cubes in the above picture are in $\smash{F_{-2}\mathcal{C}_2
(3)}$ but not in $\smash{\mathcal{D}_2^2(3)}$. A horizontal shrinking deforms
the cubes into $\smash{\mathcal{D}_2^2(3)}$.

\begin{definition}
 Let
 \[
 H \co  \mathcal{C}_n(j)\times [0,1) \longrightarrow \mathcal{C}_n(j)
 \]
 be the homotopy which shrinks the radius of the first coordinate of each
 cube linearly without moving the center.
 \begin{gather*}
 \sigma_s \co  F_{-s}\mathcal{C}_n(j) \longrightarrow [0,1) \tag*{\hbox{Define}}
 \\
 \sigma_s(\mathbf{c}) = \inf\{t| H(\mathbf{c},t) \in \mathcal{D}_n^s(j)\}.\tag*{\hbox{by}}
\end{gather*}
\end{definition}

Then obviously $\sigma_s$ is continuous and gives the minimal amount of
the radii of the first coordinates we need to shrink for those cubes in
$F_{-s}\mathcal{C}_n(j)$ in order
to compress them into $\mathcal{D}_n^s(j)$.
For most cubes $\mathbf{c} \in F_{-s}\mathcal{C}_n(j)$,
$H(\mathbf{c},\sigma_{q+1}(\mathbf{c}))$ is defined.
However, if $\mathbf{c}$ consists of $s$ piles of cubes each of which
consists of cubes whose centers are lined up in a single vertical line
(hyperplane), $H(\mathbf{c},\sigma_{q+1}(\mathbf{c}))$ squashes the cubes flat
vertically. For those cubes we need to use $H(\mathbf{c},\sigma_s(\mathbf{c}))$.
Namely, the amount of shrinking varies for different configurations of
cubes.

Fortunately, we can distinguish those vertically aligned cubes by
using the function
\[
 u_s \co  F_{-s}\mathcal{C}_n(j) \longrightarrow [0,1],
\]
thanks to \fullref{VerticallyAligned}.
Now the following gives us a homotopy we want
\[
 G(\mathbf{c},t) = H(\mathbf{c}, t(u_s(\mathbf{c})\sigma_s(\mathbf{c})
 +(1-u_s(\mathbf{c}))\sigma_{s+1}(\mathbf{c}))).
\]
\[
 G(\mathbf{c},1) \in D^s_n(j) \leqno{\hbox{Note that}}
\]
since $\sigma_s(\mathbf{c})\le \sigma_{s+1}(\mathbf{c})$.
Thus we have a $\Sigma_j$--equivariant homotopy equivalence
\[
 \mathcal{D}_n^q(j)/\mathcal{D}_n^{q+1}(j) \simeq
 F_{-q}\mathcal{C}_n(j)/F_{-q-1}\mathcal{C}_n(j).
\]
Horizontally decomposable cubes decompose. Thus it is enough to prove
the following homotopy equivalence:
\[
 \left(\mathcal{D}_n^1(j)/\mathcal{D}_n^{2}(j)\wedge_{\Sigma_j}
 X^{\wedge j}\right)\wedge S^1 \simeq D^{(n-1)}_{j}(\Sigma X).
\]
Note that $\mathcal{D}_n^1(j)$ contains cubes that are not necessary for
analyzing the filtration quotients.
Namely, we don't need those cubes with $\Ima c'_1\cap \cdots \cap\Ima
c'_j = \emptyset$, where $c'_i$ is the first coordinate of the cube
$c_i$, as with the four cubes in the picture below.

\begin{center}
   \begin{picture}(100,100)(0,0)
   \put(0,0){\framebox(100,100)[bl]{}}

   \put(10,80){\framebox(20,15){}}
   \put(20,50){\framebox(25,20){}}
   \put(27,0){\line(0,1){100}}

%   \multiput(50,0)(0,5){20}{\line(0,1){2.5}}

   \put(55,20){\framebox(35,20){}}
   \put(60,45){\framebox(30,40){}}
   \put(73,0){\line(0,1){100}}
  \end{picture}
\end{center}
In order to be more efficient, let us introduce yet another filtration.

\begin{definition}
 Define $G_{-s}\mathcal{C}_n(j)$ to be the subset of $\mathcal{C}_n(j)$
 consisting of
 cubes $(c_1,\cdots, c_j)$ which cannot be decomposed into $(s-1)$
 collections of cubes each of which can be skewered by a vertical line
 (hyperplane) intersecting with each interior.
\end{definition}

 Then we have the homotopy equivalence
 \[
 G_{-1}\mathcal{C}_n(j)/G_{-2}\mathcal{C}_n(j) \simeq
 \mathcal{D}_n^1(j)/\mathcal{D}_n^2(j),
 \]
 and the scanning map
 \[
 \scan_1 \co 
 \left(G_{-1}\mathcal{C}_n(j)/G_{-2}\mathcal{C}_n(j)\wedge_{\Sigma_j}
 X^{\wedge
 j}\right)\wedge \left(\Delta^1/\partial
 \Delta^1\right) \longrightarrow D^{(n-1)}_{j}(\Sigma X)
 \]
given by taking the intersection with the vertical hyperplane with the
first coordinate $t \in \Delta^1$ is surjective.
However, it is not easy to find a homotopy inverse to this map. We use
the following auxiliary space instead.

\begin{definition}
 Let $\mathcal{C}^{\varepsilon}_n(j)$ be the subset of
 $\mathcal{C}_n(j)$ consisting of cubes whose first coordinates have
 radius $\varepsilon$.  
\end{definition}

For those cubes in $G_{-1}\mathcal{C}_n(j)-G_{-2}\mathcal{C}_n(j)$, we
can deform the radii in the horizontal direction freely and we have a
homotopy equivalence
\[
G_{-1}\mathcal{C}_n(j)/G_{-2}\mathcal{C}_n(j) \simeq
G_{-1}\mathcal{C}_n^{\varepsilon}(j)/G_{-2}\mathcal{C}_n^{\varepsilon}(j)
\]
for $\varepsilon$ small enough.
 
Since the cubes in $\mathcal{C}_n^{\varepsilon}(j)$ are determined by their
centers, we have the following homeomorphism
\[
G_{-1}\mathcal{C}_n^{\varepsilon}(j)/G_{-2}\mathcal{C}_n^{\varepsilon}(j)
\cong P_j^{\varepsilon}/ dP_j^{\varepsilon}\wedge \mathcal{C}_{n-1}(j)_{+},
\]
where $P_j^{\varepsilon}$ is the convex polytope in $[-1,1]^j$ given by
\[
P_j^{\varepsilon} = \left.\left\{(b_1,\cdots,b_j)\in [-1,1]^j
\right\vert |b_i-b_k| \le 2\varepsilon \text{ for any $i,k$}\right\}
\]
and $dP_j^{\varepsilon}$ is given by
\[
dP_j^{\varepsilon} = \left.\left\{(b_1,\cdots,b_j)\in [-1,1]^j
\right\vert |b_i-b_k| = 2\varepsilon \text{ for some $i,k$}\right\}.
\]
By projecting onto the hyperplane
\[
V = \left\{(b_1,\cdots,b_j)\in [-1,1]^j\left| \sum b_k = 0\right\}\right.
\]
we obtain a homotopy equivalence
\[
P_j^{\varepsilon}/dP_j^{\varepsilon} \simeq P_j^{\varepsilon}\cap
V/dP_j^{\varepsilon}\cap V.
\]
The picture below illustrates the case $j=2$.

\begin{center}
   \begin{picture}(80,80)(0,0)
   \put(0,40){\vector(1,0){80}}
   \put(40,0){\vector(0,1){80}}

   \put(10,10){\framebox(60,60){}}

   \put(10,20){\line(1,1){50}}
   \put(20,10){\line(1,1){50}}

   \put(0,80){\line(1,-1){80}}
  \end{picture}
\end{center}

 $P_j^{\varepsilon}\cap V$ is a $(j-1)$--dimensional convex polytope
 (dual of permutohedron) and
 $dP_j^{\varepsilon}\cap V$ is its boundary. The decomposition of the
 permutation representation
 \[
 \mathbb{R}^j \cong
 \left\{(b_1,\cdots, b_j) \in
 \mathbb{R}^j\left| \sum b_k =0\right\}\right. \oplus
 \{(t,\cdots,t) \mid t\in \mathbb{R}\}
 \]
 gives a $\Sigma_j$--equivariant homotopy equivalence
 \[
 (P_j^{\varepsilon}\cap V/dP_j^{\varepsilon}\cap V)\wedge
 (\mathbb{R}\cup\{\infty\}) \simeq (S^1)^{\wedge j}
 \]
And we obtain a homotopy equivalence
\begin{eqnarray*}
 \left(G_{-1}\mathcal{C}_n(j)/G_{-2}\mathcal{C}_n(j)\wedge_{\Sigma_j}
  X^{\wedge j}\right)\wedge S^1 & \simeq & (P_j^{\varepsilon}/
 dP_j^{\varepsilon}\wedge \mathcal{C}_{n-1}(j)_{+}\wedge_{\Sigma_j}
 X^{\wedge j}) \wedge S^1 \\
 & \simeq & \mathcal{C}_{n-1}(j)_{+}\wedge_{\Sigma_j} (S^1\wedge X)^{\wedge j} \\
 & = & D^{(n-1)}_{j}(\Sigma X).
\end{eqnarray*}
This completes the proof of the identification of the $E^1$--term of the
gravity filtration with the desired tensor algebra.

Let us consider $d^1$ next. What we have proved so far is the following
fact.
\begin{eqnarray*}
 & & \Sigma^s F_{-s}\mathcal{C}_n(j)_+\wedge_{\Sigma_j} X^{\wedge j} /
  F_{-s-1}\mathcal{C}_n(j)_+\wedge_{\Sigma_j} X^{\wedge j} \\
 & = &  \Sigma^s
  F_{-s}\mathcal{C}_n(j)/F_{-s-1}\mathcal{C}_n(j)\wedge_{\Sigma_j}
  X^{\wedge j} \\
 & \simeq & \quotient{\bigvee_{S_1\amalg \cdots \amalg S_s = \{1,\cdots,
  j\}} \mathcal{C}_{n-1}(S_1)_{+}\wedge S^{S_1} \wedge \cdots
  \wedge \mathcal{C}_{n-1}(S_s)_{+}\wedge S^{S_s} \wedge X^{\wedge j}}{\Sigma_j} \\
 & \simeq & \quotient{\bigvee_{S_1\amalg \cdots \amalg S_s = \{1,\cdots,
  j\}} \mathcal{C}_{n-1}(S_1)_{+}\wedge (\Sigma X)^{\wedge S_1}\wedge \cdots
  \wedge \mathcal{C}_{n-1}(S_s)_{+}\wedge (\Sigma X)^{\wedge S_s}}{\Sigma_j}
\end{eqnarray*}
where the wedge in the first homotopy equivalence runs over all
decomposition of the set $\{1,\cdots, j\}$ into a disjoint of nonempty
$s$ subsets
\[
 S_1\amalg \cdots \amalg S_s = \{1,\cdots, j\},
\]
and for $S \subset \{1,\cdots, j\}$, we abuse the notation to denote
$Y^{\wedge |S|}$ together with the action of the symmetric group by
$Y^{\wedge S}$.

Under the identification by the action of $\Sigma_j$, we obtain the
wedge over all $j_1+\cdots+j_s = j$ and
\begin{multline*}
   \Sigma^sF_{-s}\mathcal{C}_n(j) /
   F_{-s-1}\mathcal{C}_n(j)\wedge_{\Sigma_j} X^{\wedge j} \\
   \simeq  \bigvee_{j_1+\cdots+j_s=j}
  \mathcal{C}_{n-1}(j_1)_{+}\wedge_{\Sigma_{j_1}}
  (\Sigma X)^{\wedge j_1}\wedge \cdots
  \wedge \mathcal{C}_{n-1}(j_s)_{+}\wedge_{\Sigma_{j_s}} (\Sigma
  X)^{\wedge j_s}.
\end{multline*}
However in order to compute $d^1$, we should compute the map 
\[
 \Sigma^sF_{-s}\mathcal{C}_n(j)/F_{-s-1}\mathcal{C}_n(j)\wedge X^{\wedge
 j} \longrightarrow
 \Sigma^{s+1}F_{-s-1}\mathcal{C}_n(j)/F_{-s-2}\mathcal{C}_n(j)\wedge
 X^{\wedge j}
\]
before we take the quotient by the action of $\Sigma_j$. This
``connecting homomorphism'' is given by the composition
\begin{multline}
F_{-s}\mathcal{C}_n(j)/F_{-s-1}\mathcal{C}_n(j) \simeq
F_{-s}\mathcal{C}_n(j)\cup CF_{-s-1}\mathcal{C}_n(j) \\
\longrightarrow
\Sigma F_{-s-1}\mathcal{C}_n(j) \longrightarrow \Sigma
F_{-s-1}\mathcal{C}_n(j)/F_{-s-2}\mathcal{C}_n(j). 
\label{ConnectingMap}
\end{multline}
The first homotopy equivalence in the above sequence of maps is obtained
from an NDR representation for the pair
$(F_{-s}\mathcal{C}_n(j), F_{-s-1}\mathcal{C}_n(j))$.
More precisely, for an NDR pair $(X,A)$ with NDR representation $(h,u)$
satisfying
\[
h(x,s) \in A \quad \text{for}\ u(x)<s,
\]
\[
 \tilde{h} \co  X/A \rarrow{\simeq} X\cup CA \leqno{\hbox{the map}}
\]
\[
 \tilde{h}([x]) = \begin{cases}
           h(x,1) & u(x)= 1 \\
           (h(x,1),u(x)) & u(x) < 1
          \end{cases} \leqno{\hbox{defined by}}
\]
is a homotopy equivalence (see Str{\o}m \cite{Strom66,Strom68}). It is
straightforward to check
that the NDR representation $(h_s,\tilde{u}_s)$ for $(F_{-s}\mathcal{C}_n(j),
F_{-s-1}\mathcal{C}_n(j))$ satisfies the above Str{\o}m condition.

Recall that we have proved that $F_{-s}\mathcal{C}_n(j)$ can be replaced
with the space of little cubes whose first coordinates have a fixed
small radius $\varepsilon$,
$G_{-s}\mathcal{C}_n^{\varepsilon}(j)$.
In the case of $s=1$, the map
\begin{equation}
 G_{-1}\mathcal{C}_n^{\varepsilon}(j)/G_{-2}\mathcal{C}_n^{\varepsilon}(j)
  \longrightarrow
 \Sigma
 G_{-2}\mathcal{C}_n^{\varepsilon}(j)/G_{-3}\mathcal{C}_n^{\varepsilon}(j)
 \label{d^1}
\end{equation}
is given by shrinking the radii of the first coordinate. And the
identification
\[
 \Sigma^2
 G_{-2}\mathcal{C}_n^{\varepsilon}(j)/G_{-3}\mathcal{C}_n^{\varepsilon}(j)
 \simeq
\bigvee_{S_1\amalg S_2=\{1,\cdots,j\}} \mathcal{C}_{n-1}(S_1)_{+}\wedge
 S^{S^1}
\wedge \mathcal{C}_{n-1}(S_2)_{+}\wedge S^{S_2}
\]
is given by measuring the distance of the centers of the first
coordinates. Namely the component to which the image
of an element $(c_1,\cdots,c_j) \in
G_{-1}\mathcal{C}_n^{\varepsilon}(j)$ belongs
under the above identification is determined by measuring the difference
of the centers of the first coordinates. Under the identification
\[
 \Sigma
 G_{-1}\mathcal{C}_n^{\varepsilon}(j)/G_{-2}\mathcal{C}_n^{\varepsilon}(j)
 \simeq
 \mathcal{C}_{n-1}(j)_{+}\wedge S^j
\]
the map \eqref{d^1} can be identified with
\begin{multline*}
 \mathcal{C}_{n-1}(j)_{+}\wedge S^j \longrightarrow \mathcal{C}_{n-1}(j)_{+}\wedge
 (S^1\vee S^1)^{\wedge j} \\
 = \bigvee_{S_1\amalg S_2=\{1,\cdots,j\}}
 \mathcal{C}_{n-1}(S_1)_{+}\wedge S^{S^1} \wedge
 \mathcal{C}_{n-1}(S_2)_{+}\wedge S^{S_2}.
\end{multline*}
The suspension coordinate in $\Sigma
G_{-1}\mathcal{C}_n^{\varepsilon}(j)/G_{-2}\mathcal{C}_n^{\varepsilon}(j)$
determines the position in the first coordinate with which
$(c_1,\cdots,c_j)$ is cut into two collections. Thus $d^1$ is given by
taking all possible decompositions $\{1,\cdots,j\} = S_1\amalg S_2$ and
summing it up, before we divide by the action of $\Sigma_j$.

Therefore we see that
\eqref{ConnectingMap} is given by taking all possible decomposition of
indexing sets
\[
 \{1,\cdots,j\} = S_1\amalg \cdots \amalg S_s
\]
under the horizontal decomposition above.

On the other hand, it is well-known that, under the Snaith splitting,
the coproduct on $C_{n-1}(\Sigma X)$ is given by
\begin{eqnarray*}
D^{(n-1)}_j(\Sigma X) & \longrightarrow & D^{(n-1)}_j(\Sigma X\vee
 \Sigma X) \\
& = & \bigvee_{j_1+j_2=j}\mathcal{C}_{n-1}(j)_{+}
 \wedge_{\Sigma_{j_1}\times\Sigma_{j_2}} (\Sigma
 X)^{\wedge j_1}\wedge (\Sigma X)^{\wedge j_2} \\
& \longrightarrow & \bigvee_{j_1+j_2=j}D^{(n-1)}_{j_1}(\Sigma X)\wedge
 D^{(n-1)}_{j_2}(\Sigma X).
\end{eqnarray*}
Thus the map induced by the composition \eqref{ConnectingMap} in
homology coincides with the cobar differential.

Consider the case $n=2$. Recall that $\mathcal{C}_1(j) \simeq \Sigma_j$,
$\Sigma_j$--equivariantly, and we have
\[
 D_j^{(1)}(\Sigma X) = \mathcal{C}_1(j)_{+}\wedge_{\Sigma_j} (\Sigma
 X)^{\wedge j} \simeq (\Sigma X)^{\wedge j}.
\]
Thus the $d^1$ differential in the case of $\Omega^2\Sigma^2 X$ is given
by the map described in \eqref{n=2}.

\section{Constructions by Rector and Smith}
\label{OverB}

In the previous section, we have seen that the $E^1$--term of the gravity
spectral sequence is isomorphic to the $E^1$--term of the classical
cobar-type Eilenberg--Moore spectral sequence.
In order to finish the proof of \fullref{MainTheorem}, we need to
compare the $E^r$--terms for $r\ge 2$.

A couple of ways are known to construct a spectral sequence for a
 diagram
\[
 \begin{diagram}
  \node{X\times_BY} \arrow{e} \arrow{s} \node{X} \arrow{s} \\
  \node{Y} \arrow{e} \node{B}
 \end{diagram}
\]
\[
 E^2 \cong \Cotor^{h_*(B)}(h_*(X),h_*(Y))\mskip90mu \leqno{\hbox{with $E^2$--term}}
\]
for reasonably good homology theory $h_*(-)$. Our gravity spectral
sequence is one of them. Rector's construction \cite{Rector70} is a
generalization of the classical Eilenberg--Moore spectral sequence
\cite{Eilenberg-Moore66-1,Eilenberg-Moore66-2}. A construction due to
Larry Smith \cite{L.SmithEMSS,L.Smith70} gives us a general framework
for this kind of construction. We prove the remaining part of \fullref{MainTheorem} by comparing the gravity spectral sequence with
Rector's construction of the Eilenberg--Moore
spectral sequence with an aid of Larry Smith's construction.

Let us briefly recall Rector's construction in \cite{Rector70}. Given a
diagram
\[
 Y \rarrow{f} B \larrow{p} X
\]
we can form a cosimplicial space
\[
 \Omega^0(Y,B,X)
 \begin{array}{c}
  \rarrow{\delta^0} \\
  \rarrow{\delta^1} \\
  \larrow{\sigma^1}
 \end{array}
 \Omega^1(Y,B,X) 
 \begin{array}{c}
  \rarrow{\delta^0} \\
  \rarrow{\delta^1} \\
  \rarrow{\delta^2} \\
  \larrow{\sigma^1} \\
  \larrow{\sigma^2}
 \end{array}
 \Omega^2(Y,B,X)
 \cdots
\]
\[
\Omega^n(Y,B,X) = Y\times B^n\times X \leqno{\hbox{where}}
\]
and the maps are defined by
\begin{eqnarray*}
 \delta^i(y,b_1,\cdots,b_n,x) & = & \left\{
 \begin{array}{ll}
  (y,f(y),b_1,\cdots,b_n,x) & \mbox{if $i=0$} \\
  (y,b_1,\cdots,b_i,b_i,\cdots,b_n,x) & \mbox{if $1\le i\le n$} \\
  (y,b_1,\cdots,b_n,p(x),x) & \mbox{if $i=n$}
 \end{array}\right. \\
 \sigma^i(y,b_1,\cdots,b_n,x) & = &
  (y,b_1,\cdots,b_{i-1},b_{i+1},\cdots,b_n,x).
\end{eqnarray*}
Rector defined a sequence of pointed cofibrations
\begin{eqnarray}
 \Omega_0 & \rarrow{\varphi_{-1}} & \overline{\Omega}_{-1}
  \longrightarrow \Omega_{-1} \nonumber \\
 \Omega_{-1} & \rarrow{\varphi_{-2}} & \overline{\Omega}_{-2}
  \longrightarrow \Omega_{-2} \nonumber \\
 & \vdots & \label{cofibrations} \\
 \Omega_{-n+1} & \rarrow{\varphi_{-n}} & \overline{\Omega}_{-n}
  \longrightarrow \Omega_{-n} \nonumber \\
 & \vdots & \nonumber
\end{eqnarray}
directly from the cosimplicial cobar construction $\Omega^*(Y,B,X)$ as
follows.
\[
 \overline{\Omega}_{-n} = \Omega^n(Y,B,X)/\Ima\delta^1 \cup \cdots \cup
 \Ima\delta^n.\mskip80mu \leqno{\hbox{First define}}
\]
The map $\delta^0$ induces a well-defined map
\[
 \psi_{-n} \co  \overline{\Omega}_{-n+1} \longrightarrow \overline{\Omega}_{-n}.
\]
It is easy to check that $\psi_{-n-1}\psi_{-n} = \ast$.
$\Omega_{-n}$'s are defined inductively on $n$. Define
\[
\Omega_0 = \Omega^0(Y,B,X) = Y\times X.
\]
Note that $\overline{\Omega}_0 = \Omega_0/\emptyset$. Let $\varphi_{-1}$ be
the composition
\[
\Omega_0 \longrightarrow \Omega_0/\emptyset = \overline{\Omega}_0
\rarrow{\psi_{-1}} \overline{\Omega}_{-1}.
\]
Let $\Omega_{-1}$ be the (reduced) mapping cone of $\varphi_{-1}$.
Since $\psi_{-2}\psi_{-1} = \ast$, $\psi_{-2}$ induces a well-defined map
\[
 \varphi_{-2} \co  \Omega_{-1} \longrightarrow \overline{\Omega}_{-2}.
\]
More explicitly $\varphi_{-2}$ is given by
\[
\varphi_{-2}(x) = \left\{
\begin{array}{ll}
\psi_{-2}(x) & \mbox{if $x\in \overline{\Omega}_{-1}$} \\
{*} & \mbox{otherwise.}
\end{array}\right.
\]
From this description, it is easy to show $\psi_{-3}\varphi_{-2} = \ast$.

Inductively we obtain a map
\[
\varphi_{-n} \co  \Omega_{-n+1} \longrightarrow \overline{\Omega}_{-n}
\]
with $\psi_{-n-1}\varphi_{-n} = \ast$. Define $\Omega_{-n}$ to be the
mapping cone of $\varphi_{-n}$.

For any homology theory $h_*(-)$, the sequence of cofibrations
\eqref{cofibrations} induces an exact couple
\begin{eqnarray*}
 \raisebox{5pt}{$\scriptstyle R$}D^1_{-p,q} & = &
  \tilde{h}_{q}(\Omega_{-p}) \\
 \raisebox{5pt}{$\scriptstyle R$}E^1_{-p,q} & = &
  \tilde{h}_{q}(\overline{\Omega}_{-p})
\end{eqnarray*}
Rector proved the following theorem in \cite{Rector70}. 
\begin{theorem}
 \label{RectorIsEMSS}
 When $h_*(-)$ is the singular homology theory, the spectral sequence
 associated with the above exact couple is naturally isomorphic to the
 original Eilenberg--Moore spectral sequence constructed in
 \cite{Eilenberg-Moore66-1,Eilenberg-Moore66-2}.
\end{theorem}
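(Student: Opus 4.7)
The plan is to identify both spectral sequences directly at the $E^1$-level via the cosimplicial cobar, verify agreement of the $d^1$-differentials by tracing through the cofibration tower, and then invoke Larry Smith's axiomatic framework to extend the isomorphism to all higher pages.

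First I would compute the $E^1$-term of Rector's spectral sequence. The space $\overline{\Omega}_{-n}$ is the quotient of $\Omega^n(Y,B,X) = Y \times B^n \times X$ by the union of the images of the cosimplicial cofaces $\delta^1, \ldots, \delta^n$. In the singular homology setting the K\"unneth formula applies, and the collapsed subspaces correspond precisely to the ``degenerate'' strata of the cosimplicial cobar, so
\[
 E^1_{-n,q} = \tilde h_q(\overline{\Omega}_{-n}) \cong \bigl( H_*(Y) \otimes \bar H_*(B)^{\otimes n} \otimes H_*(X) \bigr)_q,
\]
which is the $(-n)$-th term of the normalized algebraic cobar complex underlying the classical Eilenberg--Moore $E^1$-page.

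Next I would identify $d^1$. By construction $\varphi_{-n-1}$ is induced from the single surviving coface $\delta^0$, and $d^1$ is obtained by composing the connecting homomorphism of the cofiber sequence $\Omega_{-n} \to \overline{\Omega}_{-n-1} \to \Omega_{-n-1}$ with the inclusion $\overline{\Omega}_{-n} \hookrightarrow \Omega_{-n}$. Under the identification of the previous step, the $\delta^0$-contribution yields the face of the cobar differential coming from $f \colon Y \to B$; the remaining terms (involving the coproduct $H_*(B) \to H_*(B)^{\otimes 2}$ and the map $p \colon X \to B$) emerge from the iterated mapping-cone gluing, because the cone attachment forces the quotiented faces $\delta^1, \ldots, \delta^{n+1}$ to contribute signed boundary terms. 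A direct chase on chain-level representatives then recovers the full alternating-sum cobar differential $\sum_i (-1)^i \delta^i_*$.

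Finally I would extend the comparison to higher pages. Both the classical Eilenberg--Moore spectral sequence and Rector's spectral sequence satisfy Larry Smith's axioms \cite{L.SmithEMSS, L.Smith70} for Eilenberg--Moore-type spectral sequences: strong convergence to $H_*(X \times_B Y)$, natural $E^2$-identification with $\Cotor^{H_*(B)}(H_*(X), H_*(Y))$, and naturality in the input diagram $Y \to B \leftarrow X$. Smith's uniqueness theorem then yields a natural isomorphism of spectral sequences, matching the $E^1$-identification of the first two steps. The main obstacle is the second step: the bookkeeping of signs and the verification that the geometric connecting homomorphism produces the full cobar differential rather than just the $\delta^0$-contribution. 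This requires careful use of the Str{\o}m NDR-representation machinery already invoked in the paper, together with the cosimplicial identities relating $\delta^0$ to the other cofaces after passing to the normalized quotient, to ensure that no extra terms appear and that signs propagate correctly through the iterated mapping cones.
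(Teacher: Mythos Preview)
The paper does not give a proof of this theorem at all: it is stated as a result of Rector, with the sentence ``Rector proved the following theorem in \cite{Rector70}'' immediately preceding it, and no argument follows. So there is no ``paper's own proof'' to compare against; the theorem is quoted as background.

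That said, a few remarks on your sketch. Your first step (identifying $\tilde h_*(\overline{\Omega}_{-n})$ with the normalized cobar group) is the right computation and is essentially what Rector does. Your third step, appealing to Smith's uniqueness theorem, is reasonable, though note that Smith's framework as recalled in this paper compares spectral sequences from the $E^2$--term on via injective displays, so invoking it would not by itself pin down the $E^1$--identification you set up in step one; you would get agreement from $E^2$ onward and then need the $E^1$ match separately. Your second step is the weak point: you assert that the quotiented cofaces $\delta^1,\ldots,\delta^{n+1}$ ``contribute signed boundary terms'' via the cone attachment, but in Rector's tower the map $\varphi_{-n-1}$ is literally induced by $\psi_{-n-1}$, i.e.\ by $\delta^0$ alone, on the non-cone part, and is the constant map on the cone. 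The full alternating-sum differential does not reappear from the gluing in the way you suggest; rather, the point (which is what Rector actually proves) is that after passing to the quotient $\overline{\Omega}_{-n}$ the single map $\delta^0$ already represents the normalized cobar differential, because the other cofaces have been killed. So the mechanism you describe for recovering the extra terms is not correct, and the ``direct chase on chain-level representatives'' you allude to would need to be replaced by the normalization argument from Rector's paper.
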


On the other hand, in \cite{L.SmithEMSS,L.Smith70}, Larry Smith
introduced another construction.
The first step in Smith's approach is to consider a fibration $\smash{X
\rarrow{p} B}$ as an object in the category of spaces over $B$.

\begin{definition}
Let $B$ be an arbitrary space. A space over $B$ is a continuous map
$f\co  X \longrightarrow B$. A pointed space over $B$ is a pair of maps
\begin{eqnarray*}
 f & \co  & X \longrightarrow B \\
 s & \co  & B \longrightarrow X
\end{eqnarray*}
with $f\circ s = \mathrm{id}_B$. For simplicity, we denote this object by $(f,s)$.
Morphisms between these objects are obviously defined. 

If $f\co  X\longrightarrow B$ is a space over $B$, we sometimes denote the
 ``total space'' $X$ by $T(f)$. Similarly, for $(f,s)$ a pointed space
 over $B$, we use the notation $T(f) = X$ if $f\co  X\longrightarrow B$.

The category of spaces over $B$ is denoted by $\Spaces/B$. The category of
 pointed spaces over $B$ is denoted by $(\Spaces/B)_\ast$.

When $B = \ast$, we simply denote $\Spaces$ and $\Spaces_\ast$ for
$\Spaces/\ast$ and $(\Spaces/\ast)_\ast$, respectively. They are the usual
categories of spaces and  pointed spaces, respectively.
\end{definition}

Almost all important constructions and notions in $\Spaces$ or in
$\Spaces_\ast$ have analogies in $\Spaces/B$ or in
$(\Spaces/B)_\ast$. To be self-contained, we
record some of them used in the following section.

\begin{definition}
 For morphisms $\varphi_0,\varphi_1 \co  (f,s) \longrightarrow (g,t)$, a
 homotopy from $\varphi_0$ to $\varphi_1$ is a map
 \[
 \varphi \co  T(f)\times I \longrightarrow T(g)
 \]
 which fits into the following commutative diagrams, where $r\in [0,1]$.
\[
 \begin{diagram}
  \node{T(f)\times I} \arrow{e,t}{\varphi} \arrow{s} \node{T(g)}
  \arrow[2]{s,r}{g} \node{T(f)\times I} \arrow{e,t}{\varphi} \node{T(g)}
  \\
  \node{T(f)} \arrow{s,r}{f} \node{} \node{T(f)\times \{r\}} \arrow{n}
  \node{} \\
  \node{B} \arrow{e,t}{=} \node{B} \node{B} \arrow{n,r}{s}
  \arrow{e,t}{=} \node{B} \arrow[2]{n,r}{t}
 \end{diagram}
\]
\[ \begin{diagram}
  \node{T(f)\times\{0\}} \arrow[2]{e} \arrow{s,r}{\varphi_0} \node{}
  \node{T(f)\times I} \arrow{s,r}{\varphi} \node{}
  \node{T(f)\times\{1\}} \arrow[2]{w} \arrow{s,r}{\varphi_1} \\
  \node{T(g)} \arrow[2]{e,t}{=} \node{} \node{T(g)} \node{} \node{T(g)}
  \arrow[2]{w,t}{=}
 \end{diagram}
\]
\end{definition}

\begin{definition}
 A morphism $\varphi \co  (f,s) \longrightarrow (g,t)$ in $(\Spaces/B)_\ast$ is
 called a cofibration in $(\Spaces/B)_\ast$ if it has the homotopy extension
 property with respect to the homotopy defined above.
\end{definition}

\begin{definition}
 For a morphism $\varphi \co  (f,s) \longrightarrow (g,t)$ in $(\Spaces/B)_\ast$,
 the (reduced) mapping cone of $\varphi$, denoted by 
 $(C(\varphi), S_C(\varphi))$, is defined as follows.
 \[
 T(C(\varphi)) = T(f)\times I \amalg T(g) \left/
 \begin{array}{ll}
  (x,0) \sim \varphi(x) & \mbox{for $x \in T(f)$} \\
  (s(b),r) \sim t(b) & \mbox{for $b \in B,r\in I$} \\
  (x,1) \sim (x',1) & \mbox{if $f(x) = f(x')$ for $x,x' \in T(f)$}
 \end{array}
 \right.
 \]
 The projection $C(\varphi) \co  T(C(\varphi)) \longrightarrow B$ is given
 by $f$ on $T(f)\times I$ and by $g$ on $T(g)$. The section
 $S_C(\varphi) \co  B\longrightarrow T(C(\varphi))$ is defined either by
 $s$ or $t$ which agree in $T(C(\varphi))$.
\end{definition}

\begin{definition}
 For an object $(f,s) \in (\Spaces/B)_\ast$ the (reduced) suspension of
 $(f,s) \in (\Spaces/B)_\ast$, denoted by $S(f,s) = (S(f),S(s))$,
 is defined by
 \[
 T(S(f)) = T(f)\times I \left/
 \begin{array}{ll}
  (x,0) \sim (x',0) & \mbox{if $f(x) = f(x')$ for $x,x'\in T(f)$} \\
  (x,1) \sim (x',1) & \mbox{if $f(x) = f(x')$ for $x,x'\in T(f)$} \\
  (s(b),r) \sim (s(b),r') & \mbox{for $b\in B$ and $r,r' \in I$}
 \end{array}
 \right.
 \]
\[\eqaligntop{
  S(f)(x,t) & =  f(x) \tag*{\hbox{and}}\cr
  S(s)(b) & =  (s(b),0).
 }\]
\end{definition}

\begin{definition}
 For objects $(f,s)$ and $(g,t)$ in $(\Spaces/B)_\ast$, the smash product of
 $(f,s)$ and $(g,t)$, denoted by $(f,s)\wedge (g,t) = 
(f\wedge_B g,s\wedge_B t)$, is defined by
\begin{multline*}
T(f\wedge_B g) = T(f)\times_B T(g) / (x,t(b)) \sim (s(b),y) 
\\\mbox{if $f(x) = g(y) = b$ for $x\in T(f), y\in T(g), b\in B$}
\end{multline*}
\[\eqaligntop{
(f\wedge_B g)(x,y) & =  f(x) = g(y) \tag*{\hbox{and}}\cr
(s\wedge_B t)(b) & =  (s(b),t(b))}
\]
\end{definition}

\begin{lemma}
\label{smash}
 If $(f,s) \rarrow{\varphi} (g,t)$ is a cofibration in
 $(\Spaces/B)_\ast$, then so is
 \[
 (h,u)\wedge (f,s) \rarrow{\mathrm{id}\mskip1mu\wedge\mskip1mu \varphi} (h,u)\wedge (g,t)
 \]
 with cofiber $(h,u)\wedge (C(\varphi),S_C(\varphi))$.
\end{lemma}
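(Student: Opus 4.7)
The plan is to mimic the standard proof that smashing with a pointed space preserves cofibrations in $\Spaces_*$, adapted fiberwise over $B$. Two things must be checked: (i) $\mathrm{id}\wedge\varphi$ satisfies the homotopy extension property in $(\Spaces/B)_*$ (in the sense of the homotopy defined above), and (ii) its mapping cone, built by the rule in the previous definition, is naturally isomorphic to $(h,u)\wedge(C(\varphi),S_C(\varphi))$.

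For (i), I would use the classical characterization of a cofibration by a retraction. Namely, $\varphi$ is a cofibration in $(\Spaces/B)_*$ iff the mapping cylinder $M(\varphi)$ (formed from $T(f)\times I \amalg T(g)$ modulo the relations $(x,0)\sim\varphi(x)$ and $(s(b),r)\sim t(b)$) admits a retraction $r\colon T(g)\times I \longrightarrow M(\varphi)$ over $B$ that preserves sections. Given such an $r$, I would fiber-multiply by the identity to obtain $\mathrm{id}\times_B r\colon T(h)\times_B T(g)\times I \longrightarrow T(h)\times_B M(\varphi)$, and then verify that this map descends to the quotient defining the smash product. The compatibility to check is that the identification $(x,t(b))\sim(u(b),y)$ is respected by $r$, which holds because $r$ fixes the section of $(g,t)$ pointwise; after passing to the quotient one obtains a retraction of $T((h,u)\wedge(g,t))\times I$ onto the mapping cylinder of $\mathrm{id}\wedge\varphi$, which is exactly the HEP characterization of a cofibration.

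For (ii), this is essentially bookkeeping. Both $T(C(\mathrm{id}\wedge\varphi))$ and $T((h,u)\wedge(C(\varphi),S_C(\varphi)))$ are quotients of the single space $T(h)\times_B T(f)\times I \amalg T(h)\times_B T(g)$; tracing through the three relations in each mapping cone together with the smash identification $(x,S_C(\varphi)(b))\sim(u(b),y)$, one sees that the induced equivalence classes coincide, and the obvious comparison map is a continuous bijection. Because both sides carry the quotient topology from the same total space, it is a homeomorphism of objects of $(\Spaces/B)_*$.

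The main obstacle is (i), and specifically the compatibility of the fiber-product smash quotient with the retraction for $\varphi$. The delicate point is that $\times_B$ and the pointed collapses on the fiber do not commute formally, so one has to track exactly how $S_C(\varphi)$ is inherited from $s$ and $t$ and check that the retraction $r$ preserves this section before passing to the smashed quotient. Once this compatibility is verified, the remainder of the argument is a direct transcription of the classical proof in $\Spaces_*$ with $\times$ replaced by $\times_B$ and the basepoint replaced by the section throughout.
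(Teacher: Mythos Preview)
The paper states this lemma without proof; it is recorded as a routine fact about the category $(\Spaces/B)_*$, in the same spirit as the surrounding definitions and lemmas that set up Smith's framework. Your proposal is therefore not competing with any argument in the paper, but supplying one.

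Your outline is the standard route and is correct in substance. The retraction characterization of cofibrations (Str{\o}m) carries over to $(\Spaces/B)_*$ once homotopies are interpreted as maps out of $(f,s)\wedge_B\Gamma(I_+)$, and the key compatibility you isolate---that the retraction $r$ fixes the section $t(B)$ and is a map over $B$, so $\mathrm{id}\times_B r$ respects the smash identifications---is exactly the point that makes the fiberwise argument go through. For part~(ii), your bookkeeping is right; both cones are quotients of $T(h)\times_B T(f)\times I \amalg T(h)\times_B T(g)$ by the same relations, once one unwinds the definition of $S_C(\varphi)$. The only caution is the usual one about quotient maps and products: to conclude the comparison is a homeomorphism rather than merely a continuous bijection, one should work in a convenient category (compactly generated spaces), which is implicit throughout the paper and the cited references.
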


We fix notations for forgetful functors and their adjoints among the
above categories.

\begin{definition}
 Consider the functor given by forgetting sections
 \[
 F \co  (\Spaces/B)_\ast \longrightarrow \Spaces/B.
 \]
 Its adjoint is denoted by
 \[
 G \co  \Spaces/B \longrightarrow (\Spaces/B)_\ast
 \]
 which is given, on total spaces, by
 \[
 T(G(f)) = T(f) \amalg B.
 \]
 The section is defined to be the identity map into the second component
 $B$.
 \[
 \Gamma \co  \Spaces_\ast \longrightarrow (\Spaces/B)_\ast \leqno{\hbox{Let}}
 \]
 be the functor defined on the total spaces, by
 \[
 T(\Gamma(X)) = X\times B.
 \]
 The section is defined by the composition
 \[
 B = \{*\}\times B \hookrightarrow X\times B
 \]
 where $\ast$ is the base point of $X$. Its adjoint is the functor
 \[
 \Phi \co  (\Spaces/B)_\ast \longrightarrow \Spaces_\ast
 \]
 defined, for an object $(f,s)$, by
 \[
 \Phi(f,s) = T(f)/s(B).
 \]
\end{definition}

\begin{definition}
 For any nonnegative integer $n$, we denote $(S^n_B,s^n_B) = \Gamma(S^n)$.
\end{definition}

With these functors, we can describe the definition of homotopy in
$(\Spaces/B)_\ast$ more compactly.

\begin{lemma}
 For morphisms $\varphi_0,\varphi_1 \co  (f,s) \longrightarrow (g,t)$, a
 homotopy from $\varphi_0$ to $\varphi_1$ is a morphism
 \[
 \varphi \co  (f,s)\wedge_B \Gamma(I_+) \longrightarrow (g,t)
 \]
 with the following commutative diagram
 \[
 \begin{diagram}
  \node{(f,s)\wedge_B \Gamma(\{0\}_+)} \arrow{s,r}{\|} \arrow[2]{e}
  \node{} \node{(f,s)\wedge_B \Gamma(I_+)} \arrow{s,r}{\varphi} \node{}
  \node{(f,s)\wedge_B \Gamma(\{1\}_+)} \arrow[2]{w} \arrow{s,r}{\|} \\
  \node{(f,s)} \arrow[2]{e,t}{\varphi_0} \node{} \node{(g,t)} \node{}
  \node{(f,s)} \arrow[2]{w,t}{\varphi_1}
 \end{diagram}
 \]
\end{lemma}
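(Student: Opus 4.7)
The plan is to unpack both sides of the claimed bijection and verify they match. First I would compute the total space of $(f,s)\wedge_B\Gamma(I_+)$ directly from the definitions. Since $\Gamma(I_+)$ has total space $I_+\times B$ with section $b\mapsto(+,b)$ (where $+$ denotes the disjoint basepoint of $I_+$), the fibered product $T(f)\times_B(I_+\times B)$ is canonically $T(f)\times I_+$ via $(x,(r,b))\mapsto(x,r)$. The smash relation ``$(x,t(b))\sim(s(b),y)$ when $f(x)=g(y)=b$'' then reads $(x,+)\sim(s(f(x)),r)$ for every $r\in I_+$. Fiberwise this collapses $f^{-1}(b)\times\{+\}\cup\{s(b)\}\times I_+$ to a single point, and the resulting object is canonically the ``reduced mapping cylinder of $(f,s)$ over $B$'', namely $T(f)\times I$ modulo the fiberwise relation $(s(b),r)\sim(s(b),r')$, with this collapsed class playing the role of the new section and the projection induced by $f$.

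With this model in hand, a morphism $\varphi\co(f,s)\wedge_B\Gamma(I_+)\longrightarrow(g,t)$ in $(\Spaces/B)_\ast$ is the same data as a continuous map $\varphi\co T(f)\times I\longrightarrow T(g)$ satisfying (i) $g\circ\varphi(x,r)=f(x)$, and (ii) $\varphi(s(b),r)=t(b)$ for all $b\in B$ and $r\in I$. But (i) and (ii) are precisely the two commutative squares in the earlier definition of a homotopy in $(\Spaces/B)_\ast$ (being a map over $B$, and respecting the section cylinderwise). For the endpoint conditions, I would use the natural isomorphisms $\Gamma(\{0\}_+)\cong\Gamma(S^0)\cong\Gamma(\{1\}_+)$ together with the unit isomorphism $(f,s)\wedge_B\Gamma(S^0)\cong(f,s)$ to identify the inclusions $\Gamma(\{0\}_+),\Gamma(\{1\}_+)\hookrightarrow\Gamma(I_+)$ with the inclusions $T(f)\times\{0\},T(f)\times\{1\}\hookrightarrow T(f)\times I$ in the cylinder model. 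Commutativity of the two outer squares in the lemma's diagram is then equivalent to $\varphi(-,0)=\varphi_0$ and $\varphi(-,1)=\varphi_1$, matching the endpoint diagram in the original definition of homotopy.

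The only nontrivial point, and the main (minor) obstacle, is the topological bookkeeping: checking that the quotient topology on the fibered cylinder agrees with the one induced from $T(f)\times I$, so that continuous morphisms out of $(f,s)\wedge_B\Gamma(I_+)$ really do correspond to continuous maps $T(f)\times I\to T(g)$ satisfying (i) and (ii), and that collapsing $T(f)\times\{+\}$ fiberwise introduces no identifications between distinct fibers. Both follow from the standard compactly-generated conventions and the fact that the section $s\co B\to T(f)$ is a closed embedding, so the argument reduces to assembling the three pieces above into a natural bijection.
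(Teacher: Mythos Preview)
Your argument is correct. The paper states this lemma without proof, treating it as an immediate reformulation of the earlier explicit definition of homotopy in $(\Spaces/B)_\ast$; your unpacking of $T((f,s)\wedge_B\Gamma(I_+))$ as the fiberwise reduced cylinder $T(f)\times I/\bigl((s(b),r)\sim(s(b),r')\bigr)$ and the matching of conditions (i)--(ii) with the two squares in that definition supplies exactly the verification the paper leaves implicit.
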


The functor $\Phi$ has a very good property.

\begin{lemma}
\label{CofiberPreserving}
 For any cofibration 
 \[
 (f_0,s_0) \longrightarrow (f_1,s_1) \longrightarrow (f_2,s_2)
 \]
 in $(\Spaces/B)_\ast$,
 \[
 \Phi(f_0,s_0) \longrightarrow \Phi(f_1,s_1) \longrightarrow \Phi(f_2,s_2)
 \]
 is a cofibration in $\Spaces_\ast$.
\end{lemma}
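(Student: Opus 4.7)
The plan is to exploit the adjunction $\Phi \dashv \Gamma$ indicated by the setup, together with a natural identification of the cylinder objects in the two categories. First I would verify directly that $\Phi$ is left adjoint to $\Gamma$: a pointed map $T(f)/s(B) \to Y$ is the same data as a map $T(f) \to Y$ collapsing $s(B)$ to the basepoint, which is the same as a fiberwise map $T(f) \to Y \times B$ over $B$ sending $s(b)$ to $(\ast,b)$, i.e.\ a morphism $(f,s) \to \Gamma(Y)$ in $(\Spaces/B)_{\ast}$.

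Because $\Phi$ is a left adjoint, it preserves all colimits. The cofiber $(f_2, s_2) = C(\varphi)$ in $(\Spaces/B)_{\ast}$ is a pushout of $(f_1,s_1)$ and the fiberwise reduced cone on $(f_0,s_0)$ along $\varphi$, so $\Phi(f_2, s_2)$ is the corresponding pushout in $\Spaces_{\ast}$, namely $C(\Phi(\varphi))$. This handles the identification of the three terms. What remains is to show that $\Phi(\varphi) \co \Phi(f_0,s_0)\to\Phi(f_1,s_1)$ is a cofibration in $\Spaces_{\ast}$, i.e.\ satisfies HEP.

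For HEP I would establish a natural homeomorphism
\[
\Phi\bigl((f,s)\wedge_B \Gamma(I_{+})\bigr) \;\cong\; \Phi(f,s)\wedge I_{+}
\]
by unwinding definitions: the total space of $(f,s)\wedge_B \Gamma(I_{+})$ is $T(f)\times I_{+}/(s(B)\times I_{+}\cup T(f)\times\{+\})$, and further collapsing the image of the section $b\mapsto(s(b),+)$ (which is already collapsed) produces $(T(f)/s(B))\wedge I_{+}$. Combined with the adjunction $\Phi\dashv\Gamma$, this converts pointed-homotopy data $h \co \Phi(f_0,s_0)\wedge I_{+}\to Y$ and $g \co \Phi(f_1,s_1)\to Y$ agreeing at $t=0$ into fiberwise-pointed-homotopy data $\tilde{h} \co (f_0,s_0)\wedge_B \Gamma(I_{+})\to \Gamma(Y)$ and $\tilde{g} \co (f_1,s_1)\to\Gamma(Y)$ with the same compatibility. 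Applying HEP of $\varphi$ in $(\Spaces/B)_{\ast}$ extends these to $\tilde{H} \co (f_1,s_1)\wedge_B \Gamma(I_{+})\to \Gamma(Y)$, and the adjunction then yields the sought extension $H \co \Phi(f_1,s_1)\wedge I_{+}\to Y$.

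The main technical obstacle is the bookkeeping in the natural isomorphism above: one has to check that the two successive quotients (first the fiberwise smash identification, then collapsing the section) combine correctly into the ordinary smash with $I_{+}$, and that under the adjunction the compatibility condition at $t=0$ is transferred faithfully. Everything else is a formal consequence of $\Phi$ being a left adjoint and cofibrations in each category being characterized by HEP against the respective cylinder objects.
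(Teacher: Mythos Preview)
Your argument is correct. The adjunction $\Phi \dashv \Gamma$ is exactly the one the paper sets up, and your verification of the cylinder identification $\Phi\bigl((f,s)\wedge_B \Gamma(I_{+})\bigr) \cong \Phi(f,s)\wedge I_{+}$ goes through: the total space of $(f,s)\wedge_B\Gamma(I_{+})$ is $T(f)\times I_{+}$ modulo the relation identifying $(x,+)$ with $(s(f(x)),t)$ for all $t$, and collapsing the section then yields precisely $T(f)\times I_{+}/(T(f)\times\{+\}\cup s(B)\times I_{+}) = \Phi(f,s)\wedge I_{+}$. With this in hand, the transfer of HEP via the adjunction is formal, and preservation of the cofiber follows from $\Phi$ being a left adjoint.

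As for comparison: the paper states this lemma without proof, treating it as one of the standard facts in fiberwise pointed homotopy theory drawn from Larry Smith's work \cite{L.SmithEMSS,L.Smith70}. Your write-up is therefore more detailed than what the paper offers, and the adjunction-based strategy is the natural one implicit in the paper's remark that $\Phi$ is adjoint to $\Gamma$.
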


Thanks to this lemma, any (reduced) homology theory on $\Spaces_\ast$
naturally extends to $(\Spaces/B)_\ast$.

\begin{definition}
 Let  $\tilde{h}_\ast(-)$ be any homology theory on $\Spaces_\ast$ and 
  \hbox{$(f,s) \co  X \longrightarrow B$} any
 object in $(\Spaces/B)_\ast$. Define
 \[
 h^B_*(f,s) = \tilde{h}_*\circ\Phi(f,s)= \tilde{h}_*(X/s(B)).
 \]
 Thus we have a covariant functor
 \[
 h^B_*(-) \co  (\Spaces/B)_* \longrightarrow \gAbels,
 \]
 where $\gAbels$ denotes the category of graded Abelian groups. The functor
 $h^B_*(-)$ is referred to as the homology theory on
 $(\Spaces/B)_\ast$ associated with $\tilde{h}_*(-)$.
\end{definition}

\begin{corollary}
For any cofibration in $(\Spaces/B)_\ast$
\[
(f_0,s_0) \longrightarrow (f_1,s_1) \longrightarrow (f_2,s_2)
\]
and a homology theory $\tilde{h}_*(-)$ on $\Spaces_\ast$, we have a long
 exact sequence:
\[
\cdots \longrightarrow h^B_*(f_0,s_0) \longrightarrow h^B_*(f_1,s_1)
 \longrightarrow h^B_*(f_2,s_2) \rarrow{\partial} h^B_{*-1}(f_0,s_0)
 \longrightarrow \cdots
\]
\end{corollary}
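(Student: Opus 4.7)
The proof is essentially an immediate consequence of \fullref{CofiberPreserving} together with the fact that any reduced homology theory on $\Spaces_\ast$ satisfies the long exact sequence axiom for cofibrations. The plan is to transfer the long exact sequence from the pointed-space category via the functor $\Phi$.

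First, I would apply \fullref{CofiberPreserving} to the given cofibration $(f_0,s_0) \to (f_1,s_1) \to (f_2,s_2)$ in $(\Spaces/B)_\ast$ to obtain a cofibration
\[
\Phi(f_0,s_0) \longrightarrow \Phi(f_1,s_1) \longrightarrow \Phi(f_2,s_2)
\]
in $\Spaces_\ast$. Since $\tilde{h}_\ast(-)$ is a reduced homology theory on $\Spaces_\ast$, the standard Puppe long exact sequence gives
\[
\cdots \longrightarrow \tilde{h}_\ast(\Phi(f_0,s_0)) \longrightarrow \tilde{h}_\ast(\Phi(f_1,s_1)) \longrightarrow \tilde{h}_\ast(\Phi(f_2,s_2)) \rarrow{\partial} \tilde{h}_{\ast-1}(\Phi(f_0,s_0)) \longrightarrow \cdots.
\]
Then, unwinding the definition $h^B_\ast(f,s) = \tilde{h}_\ast \circ \Phi(f,s)$, this sequence is literally the claimed long exact sequence for $h^B_\ast(-)$.

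There is essentially no obstacle here: the content is entirely packed into \fullref{CofiberPreserving}, which guarantees that $\Phi$ converts cofibrations over $B$ into honest cofibrations of pointed spaces so that the axioms of the underlying reduced homology theory apply. The only minor point worth checking is naturality of the connecting map $\partial$, but this follows because $\partial$ is natural in the pointed cofibration in $\Spaces_\ast$ and $\Phi$ is functorial, so $\partial$ descends to a natural transformation $h^B_\ast(f_2,s_2) \to h^B_{\ast-1}(f_0,s_0)$ on $(\Spaces/B)_\ast$.
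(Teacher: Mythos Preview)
Your proof is correct and is exactly the argument the paper has in mind: the corollary is stated without proof precisely because it follows immediately from \fullref{CofiberPreserving} together with the definition $h^B_\ast = \tilde{h}_\ast\circ\Phi$ and the long exact sequence axiom for $\tilde{h}_\ast$.
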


Now we are ready to recall the construction of a cobar-type
Eilenberg--Moore spectral sequence by Larry Smith. His idea is to
construct a spectral sequence out of a ``filtration'' in the category
$(\Spaces/B)_\ast$, ie\ display. Smith made a lot of assumptions on
(co)homology theory and the space $B$ in his paper
\cite{L.Smith70}. Most of his assumptions are for the
existence of a display and the convergence of the spectral
sequence. Since our purpose is to show Rector's construction and the
gravity filtration on $\Omega^n\Sigma^n X$ give rise to displays, we do
not need these assumptions. What we really need is the following.

\begin{assumption}
 Throughout the rest of this section, $\tilde{h}_*(-)$ denotes a (reduced)
 multiplicative homology theory. We also assume that the external
 product
 \[
 \tilde{h}_*(B_+) \otimes_{h_*} \tilde{h}_*(X) \longrightarrow
 \tilde{h}_*(B_+\wedge X)
 \]
 is an isomorphism for any pointed space $X$. This condition is
 satisfied, for example, if $\tilde{h}_*(B_+)$ is $h_*$--flat.
\end{assumption}

This condition is necessary for the following definition.

\begin{definition}
 Given any pointed space $(f,s)$ over $B$, define a structure of left
 $h_*^B(S_B^0,s_B^0)$--comodule on $h_*^B(f,s)$ by the composition
 \begin{eqnarray*}
  h_*^B(f,s) \cong \tilde{h}_*(\Phi(f,s)) \rarrow{\Delta_*}
   \tilde{h}_*(\Phi(f,s)\wedge \Phi(f,s))& \rarrow{\Phi(*)\wedge \mathrm{id}} &
   \tilde{h}_*(\Phi(S^0_B,s^0_B)\wedge \Phi(f,s)) \\
  & = & \tilde{h}_*(B_+\wedge \Phi(f,s)) \\
  & \cong & \tilde{h}_*(B_+)\otimes_{h_*} \tilde{h}_*(\Phi(f,s)) \\
  & = & h_*^B(S_B^0,s_B^0)\otimes_{h_*} h_*^B(f,s).
 \end{eqnarray*}
 Similarly $h_*^B(f,s)$ also has a structure of right
 $h_*^B(S^0_B,s^0_B)$--comodule.
\end{definition}

\begin{definition}
 Let $(f,s)$ be an object in $(\Spaces/B)_\ast$. An $h_*^B$--display of
 $(f,s)$ is a sequence of cofibrations
 \[
 \begin{array}{ccccc}
  (f,s) & \rarrow{\alpha_0} & (h_0,u_0) & \rarrow{\beta_{-1}} &
   (f_{-1},s_{-1}) \\
  (f_{-1},s_{-1}) & \rarrow{\alpha_{-1}} & (h_{-1},u_{-1}) &
   \rarrow{\beta_{-2}} & (f_{-2},s_{-2}) \\
  & & \vdots & & \\
  (f_{-i},s_{-i}) & \rarrow{\alpha_{-i}} & (h_{-i},u_{-i}) &
   \rarrow{\beta_{-i-1}} & (f_{-i-1},s_{-i-1}) \\
  & & \vdots & & 
 \end{array}
\]
 satisfying the following two conditions.
 \begin{enumerate}
  \item $h_*^B(h_{-i},u_{-i})$ is a flat $h_*$--module and an injective
    $h_*^B(S^0_B,s^0_B) \cong h_*(B_+)$--comodule for each $i$.
  \item ${\alpha_{-i}}_* \co  h_*^B(f_{-i},s_{-i}) \longrightarrow
    h_*^B(h_{-i},u_{-i})$ is a monomorphism.
 \end{enumerate}
\end{definition}

Suppose $\{(f_{-i},s_{-i}),(h_{-i},u_{-i})\}$ is an $h_*^B$--display of
$(f,s)$. Let $(g,t)$ be another pointed space over $B$. By \fullref{smash}, smashing with $(g,t)$ preserves cofibrations and we have
cofibrations:
\[
\begin{array}{ccccc}
 (g,t)\wedge (f,s) & \xrightarrow{\substack{\hphantom{\mathrm{id}\wedge\mskip1mu \alpha_{-1}}\\\mathrm{id}\wedge\mskip1mu \alpha_0}} & (g,t)\wedge (h_0,u_0)
  & \xrightarrow{\substack{\hphantom{\mathrm{id}\wedge\mskip1mu \beta_{-i-1}}\\\mathrm{id}\wedge\mskip1mu \beta_{-1}}} & (g,t)\wedge (f_{-1},s_{-1}) \\
 (g,t)\wedge (f_{-1},s_{-1}) & \xrightarrow{\mathrm{id}\wedge\mskip1mu \alpha_{-1}} &
  (g,t)\wedge (h_{-1},u_{-1}) & \xrightarrow{\substack{\hphantom{\mathrm{id}\wedge\mskip1mu \beta_{-i-1}}\\\mathrm{id}\wedge\mskip1mu \beta_{-2}}} &
  (g,t)\wedge (f_{-2},s_{-2}) \\
 & & \vdots & & \\
 (g,t)\wedge (f_{-i},s_{-i}) & \xrightarrow{\substack{\hphantom{\mathrm{id}\wedge\mskip1mu \alpha_{-1}}\\\mathrm{id}\wedge\mskip1mu \alpha_{-i}}} &
  (g,t)\wedge (h_{-i},u_{-i}) & \xrightarrow{\mathrm{id}\wedge\mskip1mu \beta_{-i-1}} &
  (g,t)\wedge (f_{-i-1},s_{-i-1}) \\
 & & \vdots & & 
\end{array}
\]
Thus we obtain an exact couple by applying $h_*^B(-)$.

\begin{definition}
 \label{DefofEMSS}
 Define
 \begin{eqnarray*}
  D^1_{-p,q}((f,s),(g,t)) & = & h_{-p}^B((f_{-p},s_{-p})\wedge(g,t)) \\
  E^1_{-p,q}((f,s),(g,t)) & = & h_{-p}^B((h_{-p},u_{-p})\wedge(g,t)).
 \end{eqnarray*}
 The cobar spectral sequence or the K\"unneth spectral sequence defined
 by an $h_*^B$--display
 \[
 \{(f_{-i},s_{-i}),(h_{-i},u_{-i})\}
 \]
 is the spectral sequence denoted
 $\{E^r_{*,*}((f,s),(g,t))\}$ associated with this exact couple.
\end{definition}

In order to identify the $E^2$--term, it is important to use a special
kind of display. To see this, let $(f,s),(g,t) \in (\Spaces/B)_\ast$ and
\[
\{(f_{-i},s_{-i}),(h_{-i},u_{-i})\}
\]
be an $h_*^B$--display. Since $h_*^B(h_{-i},u_{-i})$ is flat over $h_*$,
\begin{align*}
\tilde{h}_*(\Phi(h_{-i},u_{-i})\wedge \Phi(g,t)) &\cong
\tilde{h}_*(\Phi(h_{-i},u_{-i}))\otimes_{h_*} \tilde{h}_*(\Phi(g,t)) \\ &=
h_*^B(h_{-i},u_{-i})\otimes_{h_*} h_*^B(g,t).
\end{align*}
Note that the following composition is trivial:
\begin{eqnarray*}
h_*^B((h_{-i},u_{-i})\wedge (g,t)) & \longrightarrow &
 \tilde{h}_*(\Phi(h_{-i},u_{-i})\wedge \Phi(g,t)) \\
& \cong & h_*^B(h_{-i},u_{-i})\otimes_{h_*} h_*^B(g,t) \\
& \xrightarrow{\psi\otimes \mathrm{id}-\mathrm{id}\otimes \varphi} &
 h_*^B(h_{-i},u_{-i})\otimes_{h_*} h_*^B(S^0_B,s^0_B) \otimes_{h_*}
 h_*^B(g,t).
\end{eqnarray*}
\[\eqaligntop{
 \Delta_*  \co   h_*^B(S^0_B,s^0_B) &\longrightarrow
 h_*^B(S^0_B,s^0_B)\otimes_{h_*} h_*^B(S^0_B,s^0_B) \tag*{\hbox{where}}\cr
 \psi  \co   h_*^B(h_{-i},u_{-i}) &\longrightarrow
 h_*^B(h_{-i},u_{-i})\otimes_{h_*} h_*^B(S^0_B,s^0_B) \cr
 \varphi  \co  h_*^B(g,t)& \longrightarrow h_*^B(S^0_B,s^0_B)\otimes_{h_*}
 h_*^B(g,t)}
\]
are coalgebra and comodule structure maps, respectively. Thus we obtain
a map
\[
h_*^B((h_{-i},u_{-i})\wedge (g,t)) \longrightarrow \Ker(\psi\otimes
\mathrm{id}-\mathrm{id}\otimes \varphi).
\]
By the definition of cotensor product
\[
\Ker(\psi\otimes \mathrm{id} - \mathrm{id} \otimes \varphi) =
h_*^B(h_{-i},u_{-i})\Box_{h_*^B(S^0_B,s^0_B)}h_*^B(g,t).
\]
It is convenient to assume the resulting map
\[
\Psi \co  h_*^B((h_{-i},u_{-i})\wedge (g,t)) \longrightarrow
h_*^B(h_{-i},u_{-i})\Box_{h_*^B(S^0_B,s^0_B)}h_*^B(g,t)
\]
is an isomorphism.

\begin{definition}
Let $(f,s)$ and $(g,t)$ be pointed spaces over $B$. An $h_*^B$--display
of $(f,s)$
\[
\{(f_{-i},s_{-i}),(h_{-i},u_{-i})\}
\]
is said to be injective with respect to $(g,t)$ if
\[
\Psi \co  h_*^B((h_{-i},u_{-i})\wedge (g,t)) \longrightarrow
 h_*^B(h_{-i},u_{-i})\Box_{h_*^B(S^0_B,s^0_B)} h_*^B(g,t)
\]
is an isomorphism.
\end{definition}

The following lemma is immediate from the definition.

\begin{lemma}
 If $\{E^r_{*,*}((f,s)\wedge(g,t)),d^r\}$ is a cobar spectral sequence
 defined by an injective $\smash{h_*^B}$--display of $(f,s)$ with respect to
 $(g,t)$, then we have
 \begin{eqnarray*}
  E^2_{*,*}((f,s)\wedge (g,t)) & \cong &
   \Cotor^{h_*^B(S^0_B,s^0_B)}(h_*^B(f,s),h_*^B(g,t)) \\
  & = & \Cotor^{\tilde{h}_*(B_+)}(\tilde{h}_*(\Phi(f,s)),\tilde{h}_*(\Phi(g,t))).
 \end{eqnarray*}
\end{lemma}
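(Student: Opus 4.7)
The plan is to extract from the display an injective resolution of $h^B_*(f,s)$ as a comodule over $h^B_*(S^0_B,s^0_B)$, transport that resolution through the functor $h^B_*(- \wedge (g,t))$ using the injectivity hypothesis, and read off $\Cotor$ as the cohomology of the resulting cotensor complex.

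First I would splice the display into an injective resolution. Each row
\[
(f_{-i},s_{-i}) \rarrow{\alpha_{-i}} (h_{-i},u_{-i}) \rarrow{\beta_{-i-1}} (f_{-i-1},s_{-i-1})
\]
is a cofibration in $(\Spaces/B)_\ast$, so by \fullref{CofiberPreserving} applying $h^B_*$ yields a long exact sequence; condition (2) of the display (that $(\alpha_{-i})_\ast$ is a monomorphism) collapses this into a short exact sequence of $h^B_*(S^0_B,s^0_B)$--comodules. Splicing these gives
\[
0 \longrightarrow h^B_*(f,s) \longrightarrow h^B_*(h_0,u_0) \longrightarrow h^B_*(h_{-1},u_{-1}) \longrightarrow \cdots
\]
which by condition (1) is an injective resolution of $h^B_*(f,s)$ in the category of $h^B_*(S^0_B,s^0_B)$--comodules.

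Next I would smash the display with $(g,t)$ and apply $h^B_*$. By \fullref{smash} smashing preserves cofibrations, so the smashed display assembles into the exact couple of \fullref{DefofEMSS} with
\[
E^1_{-p,*}((f,s)\wedge (g,t)) = h^B_*\bigl((h_{-p},u_{-p})\wedge (g,t)\bigr).
\]
The injectivity hypothesis of the display with respect to $(g,t)$ identifies this, via $\Psi$, with the cotensor product $h^B_*(h_{-p},u_{-p}) \Box_{h^B_*(S^0_B,s^0_B)} h^B_*(g,t)$. The remaining step is to match $d^1$ with the cotensor differential: $d^1$ is the composite of the connecting map of the cofibration
\[
(h_{-p},u_{-p})\wedge (g,t) \longrightarrow (f_{-p-1},s_{-p-1})\wedge (g,t)
\]
with the map induced by $\alpha_{-p-1}\wedge \mathrm{id}$, and naturality of $\Psi$ in the first variable identifies this composite with $\partial\, \Box\, \mathrm{id}$, where $\partial$ is the boundary of the spliced resolution. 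Consequently the $E^1$--complex is the standard complex computing $\Cotor^{h^B_*(S^0_B,s^0_B)}(h^B_*(f,s), h^B_*(g,t))$ from an injective resolution in the first variable, and passing to cohomology yields the claim.

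The main obstacle is the final compatibility check. The differential $d^1$ is defined geometrically, via mapping cones and connecting maps in $(\Spaces/B)_\ast$, whereas the cotensor differential is algebraic. Verifying that $\Psi$ intertwines the two requires checking naturality of the comodule structure map $h^B_*(f,s) \to h^B_*(S^0_B,s^0_B)\otimes_{h_*} h^B_*(f,s)$ with respect to the maps $\alpha_{-i},\beta_{-i-1}$ coming from the display, and tracking it through the boundary of the exact couple; once this compatibility is in hand, the rest of the identification is purely formal.
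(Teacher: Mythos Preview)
Your proposal is correct and is exactly the argument the paper has in mind; the paper simply asserts the lemma is ``immediate from the definition'' and gives no further details, so your write-up spells out the standard reasoning (splice the display into an injective comodule resolution, cotensor with $h^B_*(g,t)$ via the isomorphism $\Psi$, and identify $d^1$ with the cotensor differential) that the author is taking for granted.
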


This lemma suggests that cobar spectral sequences defined by using
injective displays are isomorphic to each other from the $E^2$--term
on. In fact, this is the case.

\begin{theorem}
\label{Uniqueness}
Let $(f,s)$ and $(g,t)$ be pointed spaces over $B$. Let
\[
\begin{array}{c}
\{(f_{-i},s_{-i}),(h_{-i},u_{-i})\} \\
\{({f'}_{-i},{s'}_{-i}),({h'}_{-i},{u'}_{-i})\}
\end{array}
\]
be injective $h_*^B$--displays for $(f,s)$ with respect to $(g,t)$. Let
 $\{E^r\}$ and $\{{}'E^r\}$ be the cobar spectral sequences defined by
 the first and the second display, respectively. Then we have an
 isomorphism of spectral sequences for $r \ge 2$,
\[
E^r \cong {}'E^r.
\]
\end{theorem}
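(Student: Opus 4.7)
The plan is to mimic the classical comparison theorem for injective resolutions, but lifted to the category $(\Spaces/B)_\ast$. The key point is that the two displays, as diagrams of cofibrations landing in objects whose $h_*^B$--homology is $h_*^B(S^0_B,s^0_B)$--injective, should admit a morphism between them extending the identity on $(f,s)$, and any two such morphisms should be homotopic through maps of displays. Smashing with $(g,t)$ and applying $h_*^B$ will then produce a morphism of exact couples, hence of spectral sequences, which is the identity on $E^2 \cong \Cotor^{h_*^B(S^0_B,s^0_B)}(h_*^B(f,s),h_*^B(g,t))$. The mapping lemma for spectral sequences then forces the map to be an isomorphism on every subsequent page.

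More concretely, I would proceed as follows. First, construct inductively a morphism of displays
\[
\Phi_{-i}\co (f_{-i},s_{-i})\longrightarrow (f'_{-i},s'_{-i}),\quad
\Psi_{-i}\co (h_{-i},u_{-i})\longrightarrow (h'_{-i},u'_{-i}),
\]
starting from $\Phi_0=\mathrm{id}_{(f,s)}$. At the inductive step, the composition $\alpha'_{-i}\circ\Phi_{-i}\co (f_{-i},s_{-i})\to(h'_{-i},u'_{-i})$ must be extended along the cofibration $\alpha_{-i}\co (f_{-i},s_{-i})\hookrightarrow(h_{-i},u_{-i})$. This is where the injectivity hypothesis on $h_*^B(h'_{-i},u'_{-i})$ as an $h_*^B(S^0_B,s^0_B)$--comodule is used: algebraically the obstruction vanishes, and one lifts this algebraic splitting to a map in $(\Spaces/B)_\ast$ using the homotopy extension property built into the definition of a cofibration in this category. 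Passing to cofibers then supplies $\Phi_{-i-1}$.

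Next, I would smash the whole comparison with $(g,t)$, which preserves cofibrations by \fullref{smash}, to obtain a morphism of the smashed sequences of cofibrations. Applying $h_*^B(-)$ yields a morphism of exact couples
\[
(D^1_{*,*},E^1_{*,*})\longrightarrow ({}'D^1_{*,*},{}'E^1_{*,*}),
\]
and hence a morphism of spectral sequences $\{E^r\}\to\{{}'E^r\}$. On the $E^2$--page, by the lemma immediately preceding this theorem, both sides are canonically $\Cotor^{h_*^B(S^0_B,s^0_B)}(h_*^B(f,s),h_*^B(g,t))$, and one checks that the induced map is the one obtained from the algebraic comparison of injective resolutions extending the identity, which is the identity after passing to Cotor.

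Finally, I would invoke the standard Eilenberg--Moore style mapping lemma: a morphism of spectral sequences (with appropriate convergence, or just at the level of pages) that is an isomorphism on $E^2$ is an isomorphism on $E^r$ for all $r\ge 2$. The main obstacle I anticipate is the inductive lifting in the first step, specifically bridging between the algebraic notion of injectivity (an Ext/Cotor vanishing statement in $h_*^B(S^0_B,s^0_B)$--comodules) and a topological lifting in $(\Spaces/B)_\ast$; one likely has to invoke flatness of $h_*^B(h_{-i},u_{-i})$ together with the Künneth--type assumption to translate comodule splittings into maps realizable on the space level, and then use the cofibration structure to rigidify these up to homotopy. The chain--homotopy uniqueness of the comparison, needed to make the morphism of spectral sequences well-defined independent of choices, is handled by the same inductive injective--lifting argument applied to the cylinder object $(f_{-i},s_{-i})\wedge\Gamma(I_+)$.
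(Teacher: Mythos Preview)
Your overall strategy---build a morphism of displays, smash with $(g,t)$, get a map of exact couples that is an isomorphism on $E^2$, then invoke the mapping lemma---is the right shape, and the last two steps are fine. The gap is in the first step, and you already put your finger on it: you need to extend $\alpha'_{-i}\circ\Phi_{-i}\co (f_{-i},s_{-i})\to(h'_{-i},u'_{-i})$ along the cofibration $\alpha_{-i}$, but neither ingredient you invoke actually does that. The homotopy extension property lets you extend \emph{homotopies}, not maps; it does not say that every map out of a subobject extends over the cofibration. And comodule injectivity of $h_*^B(h'_{-i},u'_{-i})$ is a statement about extending \emph{comodule maps}, not about realizing such extensions by actual maps in $(\Spaces/B)_\ast$. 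There is no mechanism here to pass from ``an algebraic extension exists'' to ``a space-level map exists inducing it''; flatness and the K\"unneth hypothesis only go in the other direction (computing homology of smash products), they do not produce maps.

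The paper (following Smith) sidesteps exactly this realization problem. Rather than mapping one display directly to the other, one constructs an \emph{intermediate} display $\{(\bar f_{-i},\bar s_{-i}),(\bar h_{-i},\bar u_{-i})\}$ together with maps
\[
\{(f_{-i},s_{-i}),(h_{-i},u_{-i})\}\longrightarrow
\{(\bar f_{-i},\bar s_{-i}),(\bar h_{-i},\bar u_{-i})\}\longleftarrow
\{(f'_{-i},s'_{-i}),(h'_{-i},u'_{-i})\},
\]
so the comparison is a zigzag. The point is that the intermediate display is built so that maps \emph{into} it exist for structural reasons (it is assembled from the given data, essentially a product/mapping-cylinder construction of the two displays), not by appealing to algebraic injectivity. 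Once you have these two genuine maps of displays, smashing with $(g,t)$ and applying $h_*^B$ gives two maps of exact couples, each an isomorphism on $E^2$ by the $\Cotor$ identification, and the mapping lemma finishes as you outlined. So your plan is salvageable, but the inductive construction of a direct comparison map has to be replaced by the intermediate-display zigzag; see Smith \cite[pp.~112--113, 119--120]{L.Smith70}.
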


\begin{proof}
 See pp\,119--120 of \cite{L.Smith70}. Smith proved this fact by finding
 an intermediate display
 \[
 \{(\bar{f}_{-i},\bar{s}_{-i}),(\bar{h}_{-i},\bar{u}_{-i})\}
 \]
 and maps
 \[
 \{(f_{-i},s_{-i}),(h_{-i},u_{-i})\} \longrightarrow
 \{(\bar{f}_{-i},\bar{s}_{-i}),(\bar{h}_{-i},\bar{u}_{-i})\}
 \longleftarrow \{{f'}_{-i},{s'}_{-i}),({h'}_{-i},{u'}_{-i})\}.
 \]
 The existence of such a display in our case is essentially proved in
 pp\,112--113 of the same paper.
\end{proof}

\section{Comparing spectral sequences}
\label{Comparison}

Let us compare Rector's construction for
\[
 \ast \longrightarrow \Omega^{n-1}\Sigma^n X \longleftarrow
 P\Omega^{n-1}\Sigma^n X
\]
with the construction by the gravity filtration. We proved that the
$E^1$--terms of spectral sequences are isomorphic
as chain complexes in \fullref{Decompose}. In order to show that these
spectral sequences are isomorphic from the $E^2$--term, it is enough to
show that they both give rise to injective displays.

We first prove the following general fact.

\begin{theorem}
 \label{RectorandSmith}
 Consider the pullback diagram:
 \[
 \begin{diagram}
  \node{Y\times_B X} \arrow{s} \arrow{e} \node{X} \arrow{s,r}{p} \\
  \node{Y} \arrow{e,t}{f} \node{B}
 \end{diagram}
 \]
 
 If $\tilde{h}_*(B_+)$ is $h_\ast$--flat and $p$ is a fibration, the
 spectral sequence induced from Rector's geometric cobar construction
 for this pullback diagram is isomorphic to Smith's spectral sequence
 from the $E^2$--term on.
\end{theorem}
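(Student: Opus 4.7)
The plan is, via \fullref{Uniqueness}, to exhibit Rector's geometric cobar construction for the pullback diagram as (part of) an injective $h_*^B$--display in $(\Spaces/B)_*$; once this is done, \fullref{Uniqueness} immediately gives the desired isomorphism of spectral sequences from the $E^2$--term on.

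First I would recast the cosimplicial cobar construction $\Omega^*(Y, B, X)$ inside $(\Spaces/B)_*$: each $\Omega^n(Y, B, X) = Y \times B^n \times X$ is to be given a projection to $B$ and a section compatible with the coface and codegeneracy operators, so that Rector's cofibrations \eqref{cofibrations} lift to cofibrations in $(\Spaces/B)_*$ whose images under $\Phi$ recover the original $\overline{\Omega}_{-n}$ and $\Omega_{-n}$. Separating the $X$--factor, I would then identify this lifted tower as the smash of an $h_*^B$--display of the pointed space over $B$ coming from $Y$ against the pointed space over $B$ coming from $X$; up to the base-point identifications in Rector's $\overline{\Omega}_{-n}$, the $n$th display term $(h_{-n}, u_{-n})$ essentially becomes $Y \times B^n$ viewed as a pointed space over $B$.

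Next I would verify the three display axioms. Flatness of $h_*^B(h_{-n}, u_{-n})$ as an $h_*$--module follows iteratively from the hypothesis that $h_*(B_+)$ is $h_*$--flat via the K\"unneth formula applied to the product structure $Y \times B^n$. Injectivity as a $h_*^B(S_B^0, s_B^0) \cong h_*(B_+)$--comodule is automatic since this comodule is extended (cofree) on account of the product factor $B^n$, and the monomorphism property of $\alpha_{-n*}$ follows from the splitting of Rector's cofibrations in homology under the same flatness hypothesis. The critical injectivity of the display then amounts to showing that the canonical map
\[
 \Psi \co  h_*^B((h_{-n}, u_{-n}) \wedge (g, t)) \longrightarrow h_*^B(h_{-n}, u_{-n}) \,\Box_{h_*(B_+)}\, h_*^B(g, t)
\]
is an isomorphism, where $(g, t)$ denotes the pointed space over $B$ coming from $X$.

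The hardest step will be this last identification of $\Psi$ as an isomorphism, and it is precisely here that the fibration hypothesis on $p$ enters. The hypothesis guarantees that the iterated pullbacks $B^n \times_B X$ enjoy a K\"unneth--type decomposition in $h_*$--homology, and, combined with the extended comodule structure on $h_*^B(h_{-n}, u_{-n})$, this should reduce $\Psi$ to a direct algebraic isomorphism between the homology of the pullback and the indicated cotensor product over $h_*(B_+)$. Once $\Psi$ is shown to be an isomorphism, Rector's construction is a legitimate injective $h_*^B$--display and \fullref{Uniqueness} concludes the argument.
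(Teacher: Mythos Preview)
Your overall strategy---lift Rector's tower to an injective $h_*^B$--display and invoke \fullref{Uniqueness}---is exactly the paper's. But your execution reverses the roles of $X$ and $Y$ and mislocates the fibration hypothesis, and this creates a genuine gap.

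The paper displays $G(p)$, not $G(f)$. Its display terms are
\[
(\omega_{-\ell},u_{-\ell}) = \Gamma\bigl(B^{\ell}\times X/\Ima\delta^1\cup\cdots\cup\Ima\delta^{\ell}\bigr),
\]
i.e.\ objects of the form $\Gamma(Z)$ with the trivial projection $B\times Z\to B$, and one smashes these with $G(f)$. Because the fibre product with a $\Gamma$--object is trivial, one obtains $\Phi\bigl(G(f)\wedge_B(\omega_{-\ell},u_{-\ell})\bigr)\cong\overline{\Omega}_{-\ell}$ on the nose once $f$ is surjective; the injectivity isomorphism $\Psi$ is then exactly Rector's identification $\tilde{h}_*(\overline{\Omega}_{-\ell})\cong\tilde{h}_*(Y_+)\Box_{\tilde{h}_*(B_+)}\tilde{h}_*(\Phi(\omega_{-\ell},u_{-\ell}))$, which is formal because $h_*^B(\Gamma(Z))$ is an extended comodule. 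No K\"unneth theorem for nontrivial pullbacks is ever invoked.

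Your assertion that the fibration hypothesis on $p$ yields a K\"unneth decomposition for $B^n\times_B X$ is off the mark: if $B^n\to B$ is a coordinate projection then $B^n\times_B X\cong B^{n-1}\times X$ as spaces regardless of $p$, and K\"unneth for that product needs only flatness of $h_*(B_+)$. In the paper the fibration hypothesis is used for something else entirely: it lets one replace $f\co Y\to B$ by a homotopic \emph{surjection} without changing the homotopy type of $Y\times_B X$, and surjectivity of $f$ is what is required for the homeomorphism $\overline{\Omega}_{-\ell}\cong\Phi\bigl(G(f)\wedge_B(\omega_{-\ell},u_{-\ell})\bigr)$. Displaying the $Y$--side as you propose would also require $h_*$--flatness of the terms built from $Y$, for which no hypothesis is available.
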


Let $p\co  X \longrightarrow B$ be a fibration. Using the functor
\[
 G \co  \Spaces/B \longrightarrow (\Spaces/B)_\ast
\]
we obtain an object $G(p)$ in $(\Spaces/B)_\ast$.  Let
$f \co  Y \longrightarrow B$ be a continuous map. In the following we
construct a display for $G(p)$
\[
\begin{array}{ccccc}
 G(p) & \longrightarrow & (\omega_{-1},u_{-1}) & \longrightarrow &
  (p_{-1},s_{-1}) \\
 (p_{-1},s_{-1}) & \longrightarrow & (\omega_{-2},u_{-2}) &
  \longrightarrow & (p_{-2},s_{-2}) \\
 & & \vdots & & 
\end{array}
\]
so that the filtration on $Y\times_B X$ induced by this display is the
same as Rector's cosimplicial construction.

Recall that Rector's spectral sequence is induced from the cofibration
sequences
\[
\begin{array}{ccccc}
 & & \vdots & & \\
 \Omega_{-\ell+1} & \rarrow{\varphi_{-\ell}} & \overline{\Omega}_{-\ell}
  & \longrightarrow & \Omega_{-\ell} \\
 & & \vdots & & 
\end{array}
\]
while Smith's spectral sequence is induced from cofibration sequences:
\[
\begin{array}{ccccc}
 \Phi(G(f)\wedge_B G(p)) & \!\!{\longrightarrow}\!\! & \Phi(G(f) \wedge_B
  (\omega_{-1},u_{-1})) & \!\!{\longrightarrow}\!\! & \Phi(G(f) \wedge_B
  (p_{-1},s_{-1})) \\
 \Phi(G(f) \wedge_B (p_{-1},s_{-1})) & \!\!{\longrightarrow}\!\! & \Phi(G(f)
  \wedge_B (\omega_{-2},u_{-2})) & \!\!{\longrightarrow}\!\! & \Phi(G(f) \wedge_B
  (p_{-2},s_{-2})) \\
 & & \vdots & & 
\end{array}
\]
But it is not difficult to find $(p_{-\ell},s_{-\ell})$ and
$(\omega_{-\ell},u_{-\ell})$ with
\begin{eqnarray*}
\Omega_{-\ell} & \simeq & \Phi(G(f)\wedge_B(p_{-\ell},s_{-\ell})) \\
\overline{\Omega}_{-\ell} & \simeq & \Phi(G(f)\wedge_B(\omega_{-\ell},u_{-\ell})).
\end{eqnarray*}
It is natural to expect that the process of constructing such a display
is very similar to that of the cofibration sequences of Rector's.
We first define $(\omega_{-\ell},u_{-\ell})$ and then, inductively,
$(p_{-\ell},s_{-\ell})$.

\begin{definition}
 For $\ell \ge 0$, define
 \[
 T(\omega_{-\ell}) = B\times (B^{\ell}\times
 X/\Ima\delta^1\cup\cdots\cup\Ima\delta^{\ell})
 \]
 where the maps $\delta^i$ for $i= 1,\cdots,\ell$ are the maps in the
 geometric cobar construction with $Y = \ast$.  The map
 \[
 \omega_{-\ell} \co  T(\omega_{-\ell}) \longrightarrow B
 \]
 is just the projection onto the first factor. The section
 \[
 u_{-\ell} \co  B \longrightarrow T(\omega_{-\ell})
 \]
 is defined by $u_{-\ell}(b) = (b,*)$.
\end{definition}

As is the case of the geometric cobar construction, ``$\delta^0$''
induces a map
\begin{align*}
\psi_{-\ell} \co  (\omega_{-\ell},u_{-\ell}) &\longrightarrow
(\omega_{-\ell-1},u_{-\ell-1})
\\
\psi_{-\ell}(b;b_1,\cdots,b_{\ell},x) &= (b;b,b_1,\cdots,b_{\ell},x).\mskip90mu \tag*{\hbox{defined by}}
\end{align*}
We also have $\psi_{-\ell-1}\psi_{-\ell} = \ast$.

We need to check the following lemma.

\begin{lemma}
 Suppose $f\co  Y \longrightarrow B$ is surjective. Then for $\ell \ge 0$,
 we have a homeomorphism
 \[
 \overline{\Omega}_{-\ell} \cong \Phi(G(f)\wedge (\omega_{-\ell},u_{-\ell}))
 \]
 making the following diagram commutative.
 \begin{equation}
  \begin{diagram}
   \node{\overline{\Omega}_{-\ell}} \arrow{s,r}{\psi_{-\ell}}
   \arrow[2]{e,t}{\cong} \node{} \node{\Phi(G(f)\wedge
   (\omega_{-\ell},u_{-\ell}))} \arrow{s,r}{\Phi(1\wedge \psi_{-\ell})}
   \\
   \node{\overline{\Omega}_{-\ell-1}} \arrow[2]{e,t}{\cong} \node{}
   \node{\Phi(G(f)\wedge (\omega_{-\ell-1},u_{-\ell-1}))}
  \end{diagram}
  \label{Phi}
 \end{equation}
\end{lemma}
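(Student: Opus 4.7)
The plan is to unravel the definitions on both sides and match them by direct computation. First I would compute $T(G(f)\wedge_B \omega_{-\ell})$. Since $T(G(f)) = Y\amalg B$ with section $s\co B\hookrightarrow Y\amalg B$, the fiber product $T(G(f))\times_B T(\omega_{-\ell})$ splits into two strata: \emph{Type 1} points $(y,(f(y),[w]))$ with $y\in Y$ and $[w]$ a class in $B^{\ell}\times X/(\Ima\delta^1\cup\cdots\cup\Ima\delta^{\ell})$, and \emph{Type 2} points $(b,(b,[w]))$ with $b$ in the section copy of $B$. The smash relation $(\xi,t(b))\sim(s(b),\eta)$ (taking $\xi = b, \eta = (b,[w])$) absorbs every Type 2 point into the section image of the smash, and likewise (taking $\xi=y, \eta = (f(y),*)$) absorbs the Type 1 points with $[w]=\ast$. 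Applying $\Phi$, which collapses the section image, therefore yields a space whose non-base-point elements are exactly the classes $[(y,(f(y),[w]))]$ with $y\in Y$ and $[w]\neq\ast$.

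Next I would define $\Theta\co \overline{\Omega}_{-\ell}\longrightarrow \Phi(G(f)\wedge(\omega_{-\ell},u_{-\ell}))$ by $[y,b_1,\ldots,b_\ell,x]\mapsto [(y,(f(y),[b_1,\ldots,b_\ell,x]))]$. This is well defined because the defining conditions for $(b_1,\ldots,b_\ell,x)\in\Ima\delta^i$ with $i\geq 1$ (namely $b_i = b_{i+1}$ for $1\leq i\leq\ell-1$, or $b_\ell = p(x)$ for $i=\ell$) involve only the $B^\ell\times X$ coordinates, independently of $y$; these are precisely the conditions that force $[w]=\ast$ in $T(\omega_{-\ell})$ and send the right-hand side to the base point. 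Surjectivity of $f$ enters here to guarantee bijectivity: it ensures that the $B$-factor on the right is exhausted by elements of the form $f(y)$, so no stratum of $T(\omega_{-\ell})$ survives outside the image of $\Theta$. Continuity in both directions follows from the universal property of quotient topologies, since both sides arise as quotients of the same product space $Y\times B^\ell\times X$ by the same locus.

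Finally I would verify commutativity of the square \eqref{Phi} by diagram chasing an element $[y,b_1,\ldots,b_\ell,x]\in\overline{\Omega}_{-\ell}$. Going left-then-down, $\psi_{-\ell}$ (induced by $\delta^0$) produces $[y,f(y),b_1,\ldots,b_\ell,x]$, and then $\Theta$ at level $\ell+1$ gives $[(y,(f(y),[f(y),b_1,\ldots,b_\ell,x]))]$. Going right-then-down, $\Theta$ at level $\ell$ first produces $[(y,(f(y),[b_1,\ldots,b_\ell,x]))]$, and then $\Phi(1\wedge\psi_{-\ell})$, whose effect on the second factor is $(b,[w])\mapsto(b,[b,w])$, gives the same class. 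The main obstacle is the careful bookkeeping for the smash identification: one must verify that the relation $(\xi,t(b))\sim(s(b),\eta)$ never identifies two Type 1 points with $[w],[w']\neq\ast$ among themselves, which reduces to observing that the relation places at least one of the two coordinates in a section. Once this stratification is in hand, everything else is a routine check.
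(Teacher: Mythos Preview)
Your approach is the same as the paper's: a direct unraveling of the fiberwise smash product and of $\Phi$. The identification of the two strata of the fiber product, the definition of $\Theta$, the check that both sides are the quotient of $Y\times B^{\ell}\times X$ by the same locus, and the diagram chase for commutativity all match the paper's argument.

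One minor wrinkle: your explanation of where surjectivity of $f$ enters is off. You have already shown that every Type~2 point $(b,(b,[w]))$ lies in the section's equivalence class via the instantiation $\xi=b$, and this holds whether or not $b\in\Ima f$; hence in your own analysis nothing on the right-hand side survives outside $\Ima\Theta$, and your argument in fact establishes the homeomorphism without that hypothesis. The paper uses surjectivity in a different place: it records only the relation $(y,\ast)\sim(f(y),[w])$ with $y\in Y$, omitting the $\xi=b$ case, and this abbreviated relation collapses the entire $B$-piece only when every $b$ is of the form $f(y)$. Since you handled both instantiations explicitly, the hypothesis is redundant in your write-up; this is harmless, but you should not claim it is needed for bijectivity.
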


\begin{proof}
 By the definition of the smash product in $\Spaces_*/B$.
 \begin{eqnarray*}
  T(G(f)\wedge (\omega_{-\ell},u_{-\ell})) & = & T(G(f))\times_B
   T(\omega_{-\ell})/ (y,u_{-\ell}(b))\sim (b,x) \ \ \mbox{if $f(y) =
   b)$} \\
  & = & (Y\amalg B)\times_B B\times (B^{\ell}\times
   X/\Ima\delta^1\cup\cdots\cup\Ima\delta^{\ell})/\\
  & & \hspace{140pt}(y,f(y),*)\sim
   (f(y),x) \\
  & = & Y\times (B^{\ell}\times
   X/\Ima\delta^1\cup\cdots\cup\Ima\delta^{\ell}) \\
  & & \amalg B\times (B^{\ell}\times
   X/\Ima\delta^1\cup\cdots\cup\Ima\delta^{\ell}) /(y,*) \sim (f(y),x).
 \end{eqnarray*}
 Thus
\[\eqaligntop{
&\Phi(T(G(f)\wedge (\omega_{-\ell},u_{-\ell}))) \cr
  & =  \left.\frac{Y\times (B^{\ell}\times
     X/\Ima\delta^1\cup\cdots\cup\Ima\delta^{\ell}) \amalg B\times
     (B^{\ell}\times
     X/\Ima\delta^1\cup\cdots\cup\Ima\delta^{\ell})}{(y,*) \sim
     (f(y),x)} \!\!\right/\!\! B\!\times\! * \cr
  & =  Y\times (B^{\ell}\times
   X/\Ima\delta^1\cup\cdots\cup\Ima\delta^{\ell})/Y\times * \cr
  & =  Y\times B^{\ell}\times X/\Ima\delta^1\cup\cdots\cup\Ima\delta^{\ell} \cr
  & =  \overline{\Omega}_{-\ell}.
}\]
 With this description of the homeomorphism, it is easy to check the
 diagram \eqref{Phi} commutes.
\end{proof}

We can deform any continuous map $f\co  Y \longrightarrow B$ into a
surjective map up homotopy. This does not change the homotopy type of
the pullback by $f$ since $p$ is a fibration.

In the following, we always assume that $f \co  Y \longrightarrow B$ is
surjective.

\begin{corollary}
For each $\ell$, if $\tilde{h}_*(B_+)$ is $h_*$--flat, we have an
isomorphism
\[
 h_*^B(G(f)\wedge (\omega_{-\ell},u_{-\ell})) \cong
 h_*^B(G(f))\Box_{h_*^B(S^0_B,s^0_B)} h_*^B(\omega_{-\ell},u_{-\ell}).
\]
Thus the sequence of cofibrations
\[
\{ (p_{-\ell+1},s_{-\ell+1}) \longrightarrow (\omega_{-\ell},u_{-\ell})
 \longrightarrow (p_{-\ell},s_{-\ell}) \}_{\ell = 1,2,\cdots}
\]
just constructed is an injective $h_*^B$--display of $G(p)$ with respect
 to $G(f)$.
\end{corollary}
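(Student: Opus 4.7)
The plan is to unpack the definitions and reduce everything to the fact that $\Phi(\omega_{-\ell},u_{-\ell})$ is of the form $B_+\wedge Z_\ell$, where I write $Z_\ell$ for $B^\ell\times X/(\Ima\delta^1\cup\cdots\cup\Ima\delta^\ell)$. Indeed, from the explicit formula $T(\omega_{-\ell})=B\times Z_\ell$ with section $u_{-\ell}(b)=(b,\ast)$, we have
\[
\Phi(\omega_{-\ell},u_{-\ell})= B\times Z_\ell/B\times\{\ast\}\cong B_+\wedge Z_\ell.
\]
The standing flatness of $\tilde{h}_*(B_+)$ together with the external product hypothesis then yields
\[
h_*^B(\omega_{-\ell},u_{-\ell})\cong h_*^B(S^0_B,s^0_B)\otimes_{h_*}\tilde{h}_*(Z_\ell),
\]
in which the $h_*^B(S^0_B,s^0_B)$--coaction is the obvious one coming from the diagonal of $B$ on the left tensor factor. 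Thus $h_*^B(\omega_{-\ell},u_{-\ell})$ is cofree, hence injective, as a comodule over $h_*^B(S^0_B,s^0_B)\cong h_*(B_+)$, and $h_*$--flat by an induction on $\ell$ based on the evident cofiber sequence relating $Z_\ell$ to $Z_{\ell-1}$ (each step preserves flatness because $\tilde h_*(B_+)$ is flat).

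Next, the previous lemma identifies $\Phi(G(f)\wedge(\omega_{-\ell},u_{-\ell}))\cong\overline{\Omega}_{-\ell}$, and a direct inspection (after replacing $f$ by a surjection as the paper arranges, using that $p$ is a fibration) shows that $\overline{\Omega}_{-\ell}=Y\times Z_\ell/Y\times\{\ast\}=Y_+\wedge Z_\ell$. Since $\tilde{h}_*(Z_\ell)$ is $h_*$--flat, the Künneth map
\[
h_*(Y)\otimes_{h_*}\tilde{h}_*(Z_\ell)\xrightarrow{\ \cong\ }\tilde{h}_*(Y_+\wedge Z_\ell)
\]
is an isomorphism, giving $h_*^B(G(f)\wedge(\omega_{-\ell},u_{-\ell}))\cong h_*(Y)\otimes_{h_*}\tilde{h}_*(Z_\ell)$. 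On the other hand, cotensoring a right comodule $L$ with a cofree left comodule $C\otimes_{h_*} M$ over a coalgebra $C$ simply returns $L\otimes_{h_*} M$, so
\[
h_*^B(G(f))\,\Box_{h_*^B(S^0_B,s^0_B)}\,h_*^B(\omega_{-\ell},u_{-\ell})\cong h_*(Y)\otimes_{h_*}\tilde{h}_*(Z_\ell).
\]
A short diagram chase then confirms that the two computations agree under the natural map $\Psi$, establishing the claimed isomorphism.

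With $\Psi$ an isomorphism for every $\ell$ and $h_*^B(\omega_{-\ell},u_{-\ell})$ both injective as an $h_*(B_+)$--comodule and flat as an $h_*$--module, the two axioms of an injective $h_*^B$--display are met; the cofibration structure of the sequence, along with the monomorphism condition on the $\alpha_{-\ell}$'s, has already been built into the inductive definition of $(p_{-\ell},s_{-\ell})$. The main obstacle I anticipate is a careful bookkeeping of coactions: one must verify that the comodule structure on $h_*^B(\omega_{-\ell},u_{-\ell})$ coming from the $B$--factor of $B\times Z_\ell$ actually coincides with the one defined via the diagonal map in Smith's framework, so that the cofree identification legitimately feeds into the cotensor computation.
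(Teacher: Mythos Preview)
Your argument is correct in substance and is in fact more detailed than the paper's own proof, which simply invokes Rector: the paper writes only that Rector proved
\[
\tilde{h}_*(\overline{\Omega}_{-\ell}) \cong \tilde{h}_*(Y_+)\Box_{\tilde{h}_*(B_+)} \tilde{h}_*(\Phi(\omega_{-\ell},u_{-\ell})) \cong h_*^B(G(f))\Box_{h_*^B(S^0_B,s^0_B)} h_*^B(\omega_{-\ell},u_{-\ell})
\]
and concludes. What you have done is essentially reconstruct Rector's argument: identify $\Phi(\omega_{-\ell},u_{-\ell})\cong B_+\wedge Z_\ell$ so that $h_*^B(\omega_{-\ell},u_{-\ell})$ is an extended (cofree) comodule, and then use the standard identity $L\,\Box_C\,(C\otimes M)\cong L\otimes M$ together with the previous lemma's identification $\overline{\Omega}_{-\ell}\cong Y_+\wedge Z_\ell$. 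So the route is the same; you supply the mechanism where the paper cites a reference.

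One point deserves care. Your K\"unneth step for $Y_+\wedge Z_\ell$ rests on $\tilde{h}_*(Z_\ell)$ being $h_*$--flat, which you argue by induction on $\ell$. The inductive step is fine (each new $B$--factor contributes a flat tensor factor), but the base case requires $h_*(X)$ itself to be $h_*$--flat, and that is not among the stated hypotheses (only $\tilde{h}_*(B_+)$ is assumed flat). The paper sidesteps this by deferring to Rector's paper, where the needed splittings and the short exact sequences computing $\tilde{h}_*(\overline{\Omega}_{-\ell})$ are established directly; in effect one uses that $\overline{\Omega}_{-\ell}$ can be built by iterated smashing with $B_+$ so that the standing K\"unneth assumption on $B_+$ does all the work, without ever needing $h_*(X)$ flat. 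If you want your self-contained version to go through, you should either add that hypothesis or reorganize the K\"unneth argument so that every external product you invoke has a $B_+$ factor, as Rector does. The bookkeeping of coactions you flag at the end is routine once this is arranged.
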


\begin{proof}
Rector proved in \cite{Rector70} that
\begin{eqnarray*}
\tilde{h}_*(\overline{\Omega}_{-\ell}) & \cong &
 \tilde{h}_*(Y_+)\Box_{\tilde{h}_*(B_+)}
 \tilde{h}_*(\Phi(\omega_{-\ell},u_{-\ell})) \\
& \cong & h_*^B(G(f))\Box_{h_*^B(S^0_B,s^0_B)}
 h_*^B(\omega_{-\ell},u_{-\ell}).
\end{eqnarray*}
Therefore the display
 $\{(p_{-\ell},s_{-\ell}),(\omega_{-\ell},u_{-\ell})\}$ is an injective
 display.
\end{proof}

Let us prove that the spectral sequence induced by the above display is
identical to Rector's spectral sequence.

Let $\alpha_{-1} \co  G(p) \longrightarrow (\omega_{-1},u_{-1})$ be the map
in $(\Spaces/B)_\ast$ defined on the total space by the following composition.
\begin{align*}
T(G(p)) = X \amalg B \xrightarrow{p\times \mathrm{id}_X\amalg \mathrm{id}_B} B\times X \amalg B
&= B\times X \amalg B\times \{*\}\\ &= B\times X/\phi = T(\omega_0)
\rarrow{\psi_{-1}} T(\omega_{-1}).
\end{align*}
Let $(p_{-1},s_{-1})$ be the cofiber of $\alpha_{-1}$. Since
$\psi_{-1}\psi_{-2} = \ast$, $\psi_{-2}$ induces a map
\[
\alpha_{-2} \co  (p_{-1},s_{-1}) \longrightarrow (\omega_{-2},u_{-2})
\]
with $\psi_{-3}\alpha_{-2} = \ast$. Inductively on $\ell$, we can define 
$(p_{-\ell+1},s_{-\ell+1})$ and a map
\[
\alpha_{-\ell} \co  (p_{-\ell+1},s_{-\ell+1}) \longrightarrow
(\omega_{-\ell},u_{-\ell})
\]
\[
\psi_{-\ell-1}\alpha_{-\ell} = \ast. \leqno{\hbox{with}}
\]
By the commutativity of the diagram \eqref{Phi} and \fullref{CofiberPreserving}, we have an equivalence of cofibrations
\[
\begin{diagram}
\node{\Omega_{-\ell+1}} \arrow{s} \arrow[2]{e} \node{}
 \node{\Phi(G(f)\wedge (p_{-\ell+1},s_{-\ell+1}))} \arrow{s} \\
\node{\overline{\Omega}_{-\ell}} \arrow{s} \arrow[2]{e} \node{}
 \node{\Phi(G(f)\wedge (\omega_{-\ell},u_{-\ell}))} \arrow{s} \\
\node{\Omega_{-\ell}} \arrow[2]{e} \node{} \node{\Phi(G(f)\wedge
 (p_{-\ell},s_{-\ell}))}
\end{diagram}
\]
This completes the proof of \fullref{RectorandSmith}.

It remains to show that the gravity filtration gives rise to an
injective display.
Before we investigate the gravity spectral sequence, we consider a
more general situation. Let
\[
\begin{diagram}
\node{F} \arrow{e} \arrow{s} \node{E} \arrow{s,r}{p} \\
\node{\{*\}} \arrow{e} \node{B.}
\end{diagram}
\]
be a pullback diagram of pointed spaces. The construction of the
Eilenberg--Moore spectral sequence in \fullref{OverB} is based on the
notion of display which is a filtration on $G(p)$ in the category of
pointed spaces over $B$. As we have seen in \fullref{OverB}, a display
induces a stable decreasing filtration on $F$, and hence a spectral
sequence abutting to the homology of $F$. In some cases, however, the
total space $E$ itself has a stable decreasing filtration which induces
a stable decreasing filtration on $F$. In fact, under some conditions, a
decreasing filtration on $E$ defines a display for $G(p)$ which induces
the same stable filtration on $F$ as the one induced by the filtration
on $E$. To be more precise, consider the following data and conditions.
\begin{enumerate}
 \item A pullback diagram of pointed spaces
      \[
      \begin{diagram}
       \node{F} \arrow{e} \arrow{s} \node{E} \arrow{s,r}{p} \\
       \node{\{*\}} \arrow{e,t}{\iota_0} \node{B}
      \end{diagram}
      \]
 \item A decreasing filtration on $E$
      \[
      \cdots \subset F_{-q-1}E \subset F_{-q}E \subset \cdots \subset
       F_{-1}E \subset F_0E = E
      \]
      in which each inclusion
      \[
      F_{-q-1}E \xrightarrow{i_{-q-1}} F_{-q}E
      \]
      is a pointed cofibration
 \item In the induced decreasing filtration on $F$
      \[
      \cdots \subset F_{-q-1}F \subset F_{-q}F \subset \cdots \subset
       F_{-1}F \subset F_0F = F,
      \]
      where $F_{-q}F = F_{-q}E\cap F$, each inclusion is also a
      pointed cofibration
\end{enumerate}

\begin{proposition}
 \label{FilteredFibration}
 Given the above data, let $p_{-q} \co  F_{-q}E \longrightarrow B$ be the
 restriction of $p$ on $F_{-q}E$. If
 \[
 \tilde{h}_*(F_{-q}E/F_{-q-1}E) \cong \tilde{h}_*(F_{-q}F/F_{-q-1}F)
 \otimes_{h_*} \tilde{h}_*(B_+)
 \]
 as $\tilde{h}_*(B_+)$--comodules, then
 \[
 \begin{array}{ccccc}
  G(p) & \longrightarrow & C(G(i_{-1})) & \longrightarrow & \Sigma G(p_{-1}) \\
  \Sigma G(p_{-1}) & \longrightarrow & \Sigma C(G(i_{-2})) &
   \longrightarrow & \Sigma^2 G(p_{-2}) \\
  & & \vdots & & \\
  \Sigma^q G(p_{-q}) & \longrightarrow & \Sigma^q C(G(i_{-q-1})) &
   \longrightarrow & \Sigma^{q+1} G(p_{-q-1}) \\
  & & \vdots & &
 \end{array}
\]
 is an injective $h_*^B$--display of $G(p)$ with respect to
 $G(\iota_0)$. The stable filtration on $F$ induced by this display
 coincides with the one induced directly from the filtration on $E$.
\end{proposition}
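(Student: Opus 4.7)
The plan is to verify the three conditions defining an injective $h_*^B$--display of $G(p)$ with respect to $G(\iota_0)$ and then to identify the induced filtration. First, each displayed row is a cofibration sequence in $(\Spaces/B)_\ast$: since $i_{-q-1}\co F_{-q-1}E\hookrightarrow F_{-q}E$ is a pointed cofibration by hypothesis, and $G$ preserves cofibrations (it only adjoins a disjoint section copy of $B$), the mapping cone $C(G(i_{-q-1}))$ fits into the standard Puppe sequence $G(p_{-q-1})\to G(p_{-q})\to C(G(i_{-q-1}))\to \Sigma G(p_{-q-1})$, and suspending $q$ times yields the $q$th row. Applying \fullref{CofiberPreserving} and unwinding the definitions of $\Phi$ and $G$, we find $\Phi(G(p_{-q}))\cong (F_{-q}E)_+$ (the collapsed section contributes the basepoint) and $\Phi(C(G(i_{-q-1})))\simeq F_{-q}E/F_{-q-1}E$ as pointed spaces, so $h_*^B(\Sigma^q C(G(i_{-q-1})))\cong \tilde{h}_{\ast-q}(F_{-q}E/F_{-q-1}E)$.

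Next I would check the two conditions defining a display. The hypothesis rewrites the last group as $\tilde{h}_{\ast-q}(F_{-q}F/F_{-q-1}F)\otimes_{h_*}\tilde{h}_*(B_+)$, and such an extended comodule is both $h_*$--flat (by flatness of the factors, using our assumption on $\tilde{h}_*(B_+)$) and injective as an $\tilde{h}_*(B_+)$--comodule by the standard property of extended comodules over a coalgebra. The monomorphism condition $\alpha_{-q\ast}\co h_*^B(\Sigma^q G(p_{-q}))\hookrightarrow h_*^B(\Sigma^q C(G(i_{-q-1})))$ is equivalent to the vanishing of the Puppe connecting homomorphism in homology, and this is precisely what the extended-comodule hypothesis is arranged to force.

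For injectivity of the display with respect to $G(\iota_0)$, I would compute both sides of $\Psi$ directly. Because $G(\iota_0)\wedge_B-$ realizes fiberwise restriction over the basepoint, $\Phi(C(G(i_{-q-1}))\wedge_B G(\iota_0))\simeq F_{-q}F/F_{-q-1}F$. On the cotensor side, $h_*^B(G(\iota_0))\cong h_*$ with trivial $\tilde{h}_*(B_+)$--comodule structure, and the extended-comodule form of $h_*^B(C(G(i_{-q-1})))$ gives $(\tilde{h}_*(F_{-q}F/F_{-q-1}F)\otimes\tilde{h}_*(B_+))\Box_{\tilde{h}_*(B_+)}h_*\cong \tilde{h}_*(F_{-q}F/F_{-q-1}F)$. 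The natural comparison map $\Psi$ identifies the two sides. Finally, smashing the display with $G(\iota_0)$ and applying $\Phi$ reproduces, by the same cofiber-preservation lemma, the Puppe sequences $F_{-q}F\to F_{-q}F/F_{-q-1}F\to \Sigma F_{-q-1}F$, which are exactly the Puppe sequences of the restricted filtration $\{F_{-q}F\}$ on $F$; hence the two stable filtrations coincide.

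The main obstacle is the monomorphism condition. Without further input, the map $(F_{-q}E)_+\to F_{-q}E/F_{-q-1}E$ carries nontrivial kernel coming from $\tilde{h}_\ast((F_{-q-1}E)_+)$, so the hypothesis that $h_*^B(h_{-q})$ is an extended $\tilde{h}_*(B_+)$--comodule must be used in an essential way---presumably to produce a Serre-type splitting of $h_\ast(E)$ along the filtration---in order to kill the Puppe connecting maps on homology and make $\alpha_{-q\ast}$ injective.
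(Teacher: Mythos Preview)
Your proposal tracks the paper's proof closely: both establish the cofibration sequences from the Puppe construction, identify $\Phi(C(G(i_{-q-1})))$ with $F_{-q}E/F_{-q-1}E$ via a comparison of cofiber sequences, compute $G(p_{-q})\wedge G(\iota_0)$ as the restriction to the fiber, and verify the injectivity with respect to $G(\iota_0)$ by reducing both sides of $\Psi$ to $\tilde{h}_*(F_{-q}F/F_{-q-1}F)$ using the extended-comodule hypothesis. You go a bit further than the paper by explicitly invoking the standard fact that extended comodules are injective, which the paper leaves tacit.

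The concern you flag at the end---the monomorphism condition (2) in the definition of $h_*^B$--display---is real, and the paper's own proof does not address it either. The paper simply writes ``By the construction, $\{\Sigma^q G(p_{-q}),\Sigma^q C(G(i_{-q-1}))\}$ is a display'' and then checks only the $\Psi$-isomorphism. As you correctly note, injectivity of $\alpha_{-q*}\co\tilde{h}_*((F_{-q}E)_+)\to\tilde{h}_*(F_{-q}E/F_{-q-1}E)$ is equivalent to the inclusion $F_{-q-1}E\hookrightarrow F_{-q}E$ inducing zero in (unreduced) homology, and the extended-comodule hypothesis on the successive quotients does not by itself force this. So your caution is well placed: this is a lacuna shared with the paper's argument rather than a defect peculiar to your write-up. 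For the intended application the ambient space $E_n(CX,X)$ is contractible and one can hope to verify the condition directly, but as a general proposition it is left open in both treatments.
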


\begin{proof}
 By the construction, $\{\Sigma^q G(p_{-q}),\Sigma^q C(G(i_{-q-1}))\}$ is
 a display. Consider the map of cofibrations:
\[
 \begin{diagram}
  \node{\Phi(G(p_{-q-1}))} \arrow{s,l}{\|} \arrow[2]{e} \node{}
  \node{\Phi(G(p_{-q}))} \arrow[2]{e} \arrow{s,l}{\|} \node{}
  \node{\Phi(C(G(i_{-q-1})))} \arrow{s,..} \\
  \node{F_{-q-1}E_+} \arrow[2]{e} \node{} \node{F_{-q}E_+} \arrow[2]{e}
  \node{} \node{F_{-q}E/F_{-q-1}E}
 \end{diagram}
\]
Since the left and the middle vertical map are homotopy equivalences, the
 induced map in the right is also a homotopy equivalence. Thus
\[
\Phi(C(G(i_{-q-1}))) \simeq F_{-q}E/F_{-q-1}E.
\]
 On the other hand, since
\[
 F_{-q}E \times_B \{*\} = F_{-q}F,
\]
\[
G(p_{-q})\wedge G(\iota_0) = G(p_{-q}\circ j_{-q}),\leqno{\hbox{we have}}
\]
where $j_{-q} \co  F_{-q}F \hookrightarrow F_{-q}E$ is the inclusion.
Therefore
 \begin{eqnarray*}
  C(G(i_{-q-1}))\wedge G(\iota_0) & \simeq & C\left(G(p_{-q-1})\wedge
   G(\iota_0) \xrightarrow{G(i_{-q-1})} G(p_{-q-1})\wedge
   G(\iota_0)\right) \\
  & \simeq & C\left(G(p_{-q-1}\circ j_{-q}) \rarrow{} G(p_{-q}\circ j_{-q})\right).
 \end{eqnarray*}
 Thus the following chain of equalities holds:
\begin{eqnarray*}
 h_*^B(\Sigma^q C(G(i_{-q-1}))\wedge G(\iota_0)) & = & h_*^B(\Sigma^q
 C(G(p_{-q-1}\circ j_{-q}) \rarrow{} G(p_{-q}\circ j_{-q}))) \\
& = & \tilde{h}_*(\Sigma^q(F_{-q}F/F_{-q-1}F)) \\
& \cong & \tilde{h}_*(\Sigma^q(F_{-q}E/F_{-q-1}E))\Box_{\tilde{h}_*(B_+)} h_* \\
& = & h_*^B(\Sigma^q G(p_{-q}))\Box_{h_*^B(S^0_B,s^0_B)} h_*^B(G(*)) 
\end{eqnarray*}
 This proves that $\{\Sigma^q G(p_{-q}),\Sigma^q C(G(i_{-q-1}))\}$ is
 an injective $h_*^B$--display of $G(p)$ with respect to $G(\ast)$.
\end{proof}

\begin{corollary}
\label{IsotoEMSS}
 Under the assumption in the above proposition, the spectral sequence
 defined by the filtration on $F$
 \[
 E^1_{-q,*} = \tilde{h}_*(F_{-q}F/F_{-q-1}F) \Longrightarrow \tilde{h}_*(F)
 \]
 is isomorphic to the classical Eilenberg--Moore spectral sequence for the
 pullback diagram
 \[
 \begin{diagram}
  \node{F} \arrow{e} \arrow{s} \node{E} \arrow{s,r}{p} \\
  \node{\ast} \arrow{e} \node{B}
 \end{diagram}
 \]
 from the $E^2$--term on.
\end{corollary}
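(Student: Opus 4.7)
The plan is to reduce the corollary to the uniqueness of injective displays (\fullref{Uniqueness}). First, \fullref{FilteredFibration} produces an injective $h_*^B$--display $\{\Sigma^q G(p_{-q}),\Sigma^q C(G(i_{-q-1}))\}$ of $G(p)$ with respect to $G(\iota_0)$. The associated cobar spectral sequence, defined via \fullref{DefofEMSS}, has $E^1$--term
\[
E^1_{-q,*} = h_{-q}^B\!\left(\Sigma^q C(G(i_{-q-1}))\wedge G(\iota_0)\right),
\]
and the computation already carried out inside the proof of \fullref{FilteredFibration} shows that
\[
\Phi\!\left(\Sigma^q C(G(i_{-q-1}))\wedge G(\iota_0)\right)\simeq \Sigma^q\bigl(F_{-q}F/F_{-q-1}F\bigr).
\]
Tracking the connecting maps $\Sigma^q G(p_{-q})\to \Sigma^q C(G(i_{-q-1}))\to \Sigma^{q+1} G(p_{-q-1})$ through the same identifications shows that, after smashing with $G(\iota_0)$ and applying $\Phi$, the exact couple of this display is precisely the exact couple induced by the filtration $\cdots\subset F_{-q-1}F\subset F_{-q}F\subset\cdots\subset F$. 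Consequently the filtration spectral sequence on $F$ agrees on the nose with the cobar spectral sequence of this display.

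Next, \fullref{RectorandSmith} constructs a second injective $h_*^B$--display of $G(p)$ with respect to $G(\iota_0)$ (the case $Y=\ast$), and identifies the associated cobar spectral sequence with Rector's geometric Eilenberg--Moore spectral sequence; by \fullref{RectorIsEMSS} this is the classical cobar-type Eilenberg--Moore spectral sequence. Applying \fullref{Uniqueness} to these two injective displays yields an isomorphism from the $E^2$--term onward between the filtration spectral sequence on $F$ and the classical Eilenberg--Moore spectral sequence, which is the desired conclusion.

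The main obstacle is the bookkeeping in the first step: one must verify that the equivalences furnished inside the proof of \fullref{FilteredFibration} are natural enough to produce an honest isomorphism of exact couples, not just a term-by-term matching at $E^1$. Once that is arranged, the remainder is entirely formal: no input beyond \fullref{Uniqueness} and the existence of Rector's injective display from \fullref{RectorandSmith} is required.
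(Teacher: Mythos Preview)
Your proposal is correct and is precisely the argument the paper intends: the corollary is stated without proof because it follows immediately by combining \fullref{FilteredFibration} (the filtration yields an injective display whose cobar spectral sequence coincides with the filtration spectral sequence on $F$), \fullref{RectorandSmith} (Rector's construction yields another injective display), and \fullref{Uniqueness}. The bookkeeping you flag is exactly what the last sentence of \fullref{FilteredFibration} is asserting.
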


Let us apply this fact to the path-loop fibration,
\[
\Omega^n\Sigma^n X \longrightarrow P\Omega^{n-1}\Sigma^n X
\longrightarrow \Omega^{n-1}\Sigma^n X,
\]
namely the pullback diagram
\begin{equation}
\begin{diagram}
 \node{\Omega^n\Sigma^n X} \arrow[2]{e} \arrow{s} \node{}
 \node{P\Omega^{n-1}\Sigma^n X} \arrow{s} \\
 \node{\{*\}} \arrow[2]{e} \node{} \node{\Omega^{n-1}\Sigma^n X}
\end{diagram}
\label{PathLoop}
\end{equation}
or its little cube model
\[
 \begin{diagram}
  \node{C_n(X)} \arrow{s} \arrow{e} \node{E_n(CX,X)} \arrow{s} \\
  \node{\ast} \arrow{e} \node{C_{n-1}(\Sigma X)}
 \end{diagram}
\]
due to May \cite{May72}. Let us recall the definition of $E_n(CX,X)$.

\begin{definition}
 Let $(Y,B)$ be a pointed pair. We define a subspace $\mathcal{E}_n(j;Y,B)$ of
 $\Cu_{n}(j)\times Y^j$ as follows.
 \begin{eqnarray*}
  (c_1,\cdots,c_j;y_1,\cdots,y_j) \in \mathcal{E}_n(j;Y,B) &
   \Longleftrightarrow & \textrm{ if $y_k\not\in B$ then $c_k$ can be
   extended} \\
  & & \textrm{ to the right,}
 \end{eqnarray*}
 where by ``extend to the right'', we mean the following: for a cube 
 $$c = (f_1,\cdots,f_n) \co  (-1,1)^n\longrightarrow (-1,1)^n,$$ let
 $f_1^{\prime}(t)$ be the interval with $f_1^{\prime}(-1) = f_1(-1)$ and
 $f_1^{\prime}(1) =1$. Let $\tilde{c} = (f_1^{\prime}, f_2, {\cdots}, f_n)$. 
 For ${\bf c} = (c_1,\cdots,c_j) \in \Cu_{n}(j)$, we say $c_k$, for 
 $1\le k\le j$, can be extended to the right if the image of
 $\smash{\tilde{c}_k}$ does not intersect with the images of other cubes, 
 $\Ima \smash{\tilde{c}_k} \cap \Ima c_l = \phi$ for $l \neq k$.

 By restricting the defining relation of $C_n(Y)$ to
 $\amalg \Cu_{n}(j)\times_{\Sigma_j} Y^j$, we define
 \[
 E_n(Y,B) = (\coprod_j \mathcal{E}_n(j;Y,B)/\Sigma_j)/\sim.
 \]
 For $k>0$, we define
 \begin{align*}
 \mathcal{F}_k E_n(Y,B) &= \Ima (\coprod_{j=1}^{k}
 \mathcal{E}_n(j;Y,B)/\Sigma_j \longrightarrow E_n(Y,B))
 \\
 \widetilde{\mathcal{E}}_n(j;Y,B) &= \mathcal{F}_j
 E_n(Y,B)/\mathcal{F}_{j-1} E_n(Y,B).\tag*{\hbox{and}}
 \end{align*}
\end{definition}

These constructions have the following properties.

\begin{theorem}
 Under the same condition as above, the sequence
 \[
 C_n(X) \longrightarrow E_n(CX,X) \longrightarrow C_{n-1}(\Sigma X)
 \]
 is a quasifibration which is weakly homotopy equivalent to the
 path-loop fibration
 \[
 \Omega^n\Sigma^n X \longrightarrow P\Omega^{n-1}\Sigma^n X
 \longrightarrow \Omega^{n-1}\Sigma^n X.
 \]
\end{theorem}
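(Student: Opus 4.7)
The plan is to follow May's original argument in \cite{May72}, of which this theorem is essentially a reformulation. The proof has three main steps: contractibility of $E_n(CX,X)$, verification that the projection to $C_{n-1}(\Sigma X)$ is a quasifibration, and comparison with the path-loop fibration via the approximation theorem already recalled in this section.

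First, I would show that $E_n(CX,X)$ is contractible. The idea is to exploit the contraction of $CX$ to its cone point $\ast$, but one must be careful: naively sliding labels $y_k \in X$ toward $\ast$ moves them into $CX \setminus X$, which imposes the ``extendable to the right'' constraint on the corresponding cube $c_k$. The standard workaround is to first apply a preliminary deformation that horizontally shrinks each cube whose label lies in $CX \setminus X$ so that it becomes extendable with room to spare, and simultaneously shrinks and translates all other cubes so that they too become extendable before their labels cross $X \subset CX$. One then slides every label straight up to $\ast$, at which moment the basepoint identification in the definition of $E_n(CX,X)$ collapses everything to the basepoint.

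Second, I would verify that $\pi \co E_n(CX,X) \to C_{n-1}(\Sigma X)$ is a quasifibration. The natural filtration $\mathcal{F}_k E_n(CX,X)$ together with the matching Snaith-type filtration on $C_{n-1}(\Sigma X)$ reduces the problem, by induction on $k$, to showing that the map of filtration quotients is a quasifibration and that May's criteria from \cite{May72} (Theorem 2.2 and the surrounding lemmas) apply at each level. On each open stratum $\mathcal{F}_k E_n(CX,X) \setminus \mathcal{F}_{k-1} E_n(CX,X)$, the projection is locally a product: the base direction records the $(n-1)$ coordinates transverse to the first axis together with the cone coordinate that becomes the suspension coordinate of $\Sigma X$, while the fiber direction corresponds precisely to the freedom in the extendable cubes, assembling into a copy of $C_n(X)$ over the basepoint.

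Third, I would compare with the path-loop fibration. The approximation theorem provides weak equivalences $C_n(X) \whot \Omega^n \Sigma^n X$ and $C_{n-1}(\Sigma X) \whot \Omega^{n-1} \Sigma^n X$, and the loop-suspension adjunction supplies a map of sequences from the little-cube quasifibration into the path-loop fibration that is compatible with these approximations on fiber and base. Since both total spaces are contractible, the five-lemma applied to the long exact sequences of homotopy groups upgrades the fiber and base equivalences to a weak equivalence of quasifibrations. The main obstacle is the quasifibration verification in the second step: the ``extendable to the right'' restriction is engineered precisely so that the fibers of $\pi$ over a single stratum admit a product decomposition, but checking May's technical hypotheses (local triviality up to weak equivalence, and compatible NDR structure on successive filtration pairs) requires a careful combinatorial argument about how horizontally decomposable configurations of little $n$-cubes project to configurations of $(n-1)$-cubes together with a suspension direction.
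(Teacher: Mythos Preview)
The paper does not give its own proof of this theorem: it is stated without argument and attributed to May \cite{May72} (the construction $E_n(CX,X)$ is introduced as ``its little cube model \ldots\ due to May''). Your proposal correctly identifies the result as May's and sketches the standard argument from \cite{May72} (contractibility of the total space, the quasifibration check via the filtration and May's criteria, and comparison with the path-loop fibration through the approximation theorem), so there is nothing further to compare.
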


The stable splitting theorem (\fullref{Snaith}) generalizes.

\begin{theorem}
 Under the same condition as above, we have the following natural stable
 homotopy equivalences
 \begin{equation}
  E_n(CX,X) \shot \bigvee_{j=1}^{\infty}
   \widetilde{\mathcal{E}}_n(j;CX,X)/{\Sigma_j}.
   \label{SSforE_n}
 \end{equation}
\end{theorem}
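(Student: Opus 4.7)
The plan is to adapt the classical Snaith splitting argument for $C_n(X)$ (\fullref{Snaith}) to the relative setting of $E_n(CX,X)$, using the filtration $\{\mathcal{F}_k E_n(CX,X)\}$ as the natural substitute for the filtration by cube number used in Snaith's original argument. First I would verify that the filtration is well-behaved: each inclusion $\mathcal{F}_{k-1} E_n(CX,X) \hookrightarrow \mathcal{F}_k E_n(CX,X)$ is a based cofibration, so that the quotient $\widetilde{\mathcal{E}}_n(k;CX,X)/\Sigma_k$ has the expected meaning. This uses the nondegeneracy of the basepoint of $X$ and a standard NDR argument on $(\mathcal{E}_n(k;CX,X),\mathcal{F}_{k-1}\mathcal{E}_n(k;CX,X))$ analogous to the NDR structure used for $C_n(X)$.

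Next I would construct, for each $j\ge 1$, a stable map
\[
r_j \co  E_n(CX,X) \longrightarrow \widetilde{\mathcal{E}}_n(j;CX,X)/\Sigma_j
\]
that splits the evident collapse $\mathcal{F}_j E_n(CX,X) \to \widetilde{\mathcal{E}}_n(j;CX,X)/\Sigma_j$ stably. Following Snaith's scheme, the retraction is built by scaling cubes so that configurations with fewer than $j$ cubes get sent to the basepoint; the extend-to-the-right condition defining $\mathcal{E}_n(j;CX,X)$ is preserved because shrinking and translating cubes never creates overlaps that were not already there, and the condition that a cube can be extended to the right is stable under such deformations once one works within a single filtration level. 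Summing these retractions yields a stable map
\[
E_n(CX,X) \longrightarrow \bigvee_{j=1}^{\infty} \widetilde{\mathcal{E}}_n(j;CX,X)/\Sigma_j.
\]

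To conclude that this map is a stable equivalence, I would argue by induction on filtration degree: the cofibre sequences
\[
\mathcal{F}_{j-1} E_n(CX,X) \longrightarrow \mathcal{F}_j E_n(CX,X)
\longrightarrow \widetilde{\mathcal{E}}_n(j;CX,X)/\Sigma_j
\]
split stably thanks to the retractions $r_j$, and then the countable colimit $E_n(CX,X) = \operatorname{colim}_j \mathcal{F}_j E_n(CX,X)$ produces the desired wedge splitting. As a consistency check (and to import naturality in $X$), one compares with the known Snaith splitting of $C_n(X)$ under the quasifibration
\[
C_n(X) \longrightarrow E_n(CX,X) \longrightarrow C_{n-1}(\Sigma X),
\]
and with the splitting of $C_{n-1}(\Sigma X)$ via \fullref{Snaith}, so that the splitting of $E_n(CX,X)$ is forced by the two endpoints.

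The main obstacle will be showing that the retractions $r_j$ respect the extend-to-the-right condition: the Snaith retraction is essentially a ``scaling and discarding'' map on cubes, and one must check that after scaling, a cube marked by a non-basepoint element of $X$ still admits an extension to the right in the output configuration. I expect this to reduce to a careful choice of scaling factor depending continuously on the configuration (keeping centers fixed and shrinking radii toward zero in small enough neighborhoods of cubes that must be discarded), so that the extension room available for each surviving cube only grows in the process. Once this geometric point is verified, the rest of the argument is a formal repetition of Snaith's proof for $C_n(X)$.
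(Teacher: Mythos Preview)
The paper does not prove this theorem. It is stated as a known generalization of the Snaith splitting (\fullref{Snaith}), introduced with the sentence ``The stable splitting theorem (\fullref{Snaith}) generalizes,'' and then used without further justification; the surrounding facts about $E_n(CX,X)$ and its filtrations are attributed to \cite{Tamaki94}. There is therefore no ``paper's own proof'' to compare your proposal against.

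Your outline is a reasonable adaptation of the standard Snaith argument, and the technical point you flag (that shrinking cubes preserves the extend-to-the-right condition) is the right thing to worry about; it is indeed harmless, since removing or shrinking cubes can only enlarge the room to the right of a surviving cube. One remark: since $E_n(CX,X)$ is a model for the contractible space $P\Omega^{n-1}\Sigma^n X$, both sides of the asserted equivalence are stably contractible, so the bare homotopy statement is nearly content-free. What the paper actually uses is the \emph{naturality} of the splitting and its compatibility with the inclusions $C_n(X)\hookrightarrow E_n(CX,X)$ and the projection to $C_{n-1}(\Sigma X)$, so your ``consistency check'' against the quasifibration is not a side remark but the substance of what needs to be established. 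If you rewrite the argument to emphasize that the splitting maps you build are compatible with those for $C_n(X)$ and $C_{n-1}(\Sigma X)$ under the stable splittings already known for those spaces, you will have captured exactly what the paper needs.
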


With these theorems, we can replace the pullback diagram
\eqref{PathLoop} by the diagram
\[
\begin{diagram}
\node{\bigvee_{j=1}^{\infty} \Cu_{n}(j)\wedge_{\Sigma_j} X^{\wedge j}}
 \arrow[2]{e} \arrow{s} \node{} \node{\bigvee_{j=1}^{\infty}
 \widetilde{\mathcal{E}}_n(j;CX,X)/{\Sigma_j}} \arrow{s} \\
\node{\{*\}} \arrow[2]{e} \node{} \node{\bigvee_{j=1}^{\infty}
 \Cu_{n-1}(j)\wedge_{\Sigma_j} (\Sigma X)^{\wedge j}}
\end{diagram}
\]
which is stably homotopy equivalent to \eqref{PathLoop}.

In \cite{Tamaki94}, the author defined a $\Sigma_j$--equivariant
decreasing filtrations on $\widetilde{\mathcal{E}}_{n}(j;CX,X)$ which is
compatible with the gravity filtration on $\Cu_{n}(j)$ .
Thus we have a filtration on
\[
\bigvee_{j=1}^{\infty} \widetilde{\mathcal{E}}(j;CX,X)/\Sigma_j
\]
which induces the gravity filtration on
\[
\bigvee_{j=1}^{\infty} \Cu_{n}(j)_+\wedge_{\Sigma_j} X^{\wedge j}.
\]
It is essentially proved in \cite{Tamaki94} that these
filtrations satisfy the condition in \fullref{FilteredFibration}, hence, by \fullref{IsotoEMSS}, the
gravity spectral sequence in \cite{Tamaki94} is isomorphic to the
classical Eilenberg--Moore spectral sequence from the $E^2$--term on.

In order to see this is the case, 
we record the basic properties of the filtrations proved in
\cite{Tamaki94}.

\begin{proposition}
 The above filtrations on $\Cu_{n}(j)$
 and $\widetilde{\mathcal{E}}_{n}(j;CX,X)$
 \begin{multline*}
  \phi = F_{-j-1}\Cu_{n}(j) \subset F_{-j}\Cu_{n}(j) \subset \cdots
   \subset F_{-1}\Cu_{n}(j) = F_0\Cu_{n}(j) = \Cu_{n}(j) \\
  \phi = F_{-j-1}\widetilde{\mathcal{E}}_n(j;CX,X) \subset
   F_{-j}\widetilde{\mathcal{E}}_n(j;CX,X) \subset \cdots \\
  \subset F_{-1}\widetilde{\mathcal{E}}_n(j;CX,X) \subset
   F_0\widetilde{\mathcal{E}}_n(j;CX,X)
 \end{multline*}
 satisfy the following properties.
\begin{enumerate}
 \item The inclusions
       \begin{eqnarray*}
        F_{-q-1}\Cu_{n}(j)_+\wedge_{\Sigma_j} X^{\wedge j} & \subset &
     F_{-q}\Cu_{n}(j)_+\wedge_{\Sigma_j} X^{\wedge j} \\
        F_{-q-1}\widetilde{\mathcal{E}}_n(j;CX,X) & \subset &
     F_{-q}\widetilde{\mathcal{E}}_n(j;CX,X)
       \end{eqnarray*}
       are pointed cofibrations for each $q$.
 \item The inclusion
       \[
        \Cu_{n}(j)_+\wedge X^{\wedge j} \subset \widetilde{\mathcal{E}}_n(j;CX,X)
       \]
       is filtration preserving.
 \item Define
       \begin{eqnarray*}
        F_{-q}C_n(X) & = & \bigvee_{j=1}^{\infty}
     F_{-q}\Cu_{n}(j)_+\wedge_{\Sigma_j} X^{\wedge j} \\
        F_{-q}E_n(CX,X) & = & \bigvee_{j=1}^{\infty}
     F_{-q}\widetilde{\mathcal{E}}_n(j;CX,X)/\Sigma_j.
       \end{eqnarray*}
       Then for each $q$ we have a stable homotopy equivalence:
       \[
        F_{-q}E_n(CX,X)/F_{-q-1}E_n(CX,X) \shot
       F_{-q}C_n(X)/F_{-q-1}C_n(X) \wedge \Omega^{n-1}\Sigma^n X
       \]
\end{enumerate}
\end{proposition}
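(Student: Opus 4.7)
The proposition records three properties of the filtrations on $\Cu_n(j)$ and $\widetilde{\mathcal{E}}_n(j;CX,X)$ introduced in \cite{Tamaki94}. The plan is to indicate which constructions from \cite{Tamaki94} supply each property, and to sketch how (1) and (3) parallel the arguments of \fullref{Decompose}.

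First I would handle (1). For the gravity filtration on $\Cu_n(j)$, \fullref{NDR} already furnishes an explicit NDR representation $(h_s,\tilde{u}_s)$ for the pair $(F_{-s}\Cu_n(j),F_{-s-1}\Cu_n(j))$. The filtration on $\widetilde{\mathcal{E}}_n(j;CX,X)$ from \cite{Tamaki94} is defined by the same combinatorial recipe applied to the cube coordinates of a point of $\widetilde{\mathcal{E}}_n(j;CX,X)$, so the function $u_s$ and the deformation $h_s$ lift verbatim; this produces an NDR representation for $(F_{-q}\widetilde{\mathcal{E}}_n(j;CX,X), F_{-q-1}\widetilde{\mathcal{E}}_n(j;CX,X))$. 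Once the pair $(F_{-q}\Cu_n(j),F_{-q-1}\Cu_n(j))$ is an NDR pair, smashing $\Sigma_j$-equivariantly with $X^{\wedge j}$ preserves the cofibration property since the base point of $X$ is nondegenerate.

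For (2), the verification is purely definitional: the filtration on $\widetilde{\mathcal{E}}_n(j;CX,X)$ is indexed by the gravity configuration of the underlying little cubes, and the inclusion $\Cu_n(j)_{+}\wedge X^{\wedge j}\hookrightarrow \widetilde{\mathcal{E}}_n(j;CX,X)$ leaves those cube coordinates untouched. Hence its restriction pulls back to the gravity filtration of \fullref{GravityFiltration}.

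The serious step is (3), which I expect to be the main obstacle. The approach is to mimic the analysis of \fullref{Decompose}. First, the horizontal shrinking homotopy used there to deform $F_{-q}\Cu_n(j)$ onto the horizontally decomposable subspace $\mathcal{D}_n^q(j)$ extends to $F_{-q}\widetilde{\mathcal{E}}_n(j;CX,X)$, since the ``extension to the right'' condition in the definition of $\mathcal{E}_n$ is preserved (and in fact facilitated) by shrinking first coordinates without moving centers. On the resulting quotient, the Dunn-type decomposition $\Cu_n\simeq \Cu_1\otimes\Cu_{n-1}$ splits the horizontal factor off as the stable $S^q$-suspension exactly as before, and the $\Cu_{n-1}$-blocks remain intact. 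The extra data carried by $\widetilde{\mathcal{E}}_n(j;CX,X)$, namely the cone labels in $CX$ and the freedom of extension for cubes lying in the rightmost pile, assemble into a copy of the approximation model $E_{n-1}(CX,X)$ over that rightmost block; after applying the Snaith splitting (\fullref{Snaith}) to $C_{n-1}(\Sigma X)$ and observing that the left-hand piles produce exactly $F_{-q}C_n(X)/F_{-q-1}C_n(X)$, the rightmost block contributes the stable factor $\Omega^{n-1}\Sigma^nX$. Assembling these gives the desired stable equivalence
\[
F_{-q}E_n(CX,X)/F_{-q-1}E_n(CX,X) \shot F_{-q}C_n(X)/F_{-q-1}C_n(X)\wedge \Omega^{n-1}\Sigma^n X.
\]
The delicate point, where one must lean most heavily on \cite{Tamaki94}, is to make the stable equivalences canonical and compatible as $q$ varies, so that they really assemble into a filtered stable equivalence rather than merely a levelwise one.
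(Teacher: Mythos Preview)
The paper does not actually prove this proposition; it is stated purely as a record of facts established in \cite{Tamaki94} (see the sentence immediately preceding the proposition). So there is no ``paper's own proof'' to compare against beyond the citation. Your treatment of (1) and (2) is fine and is indeed what is carried out in \cite{Tamaki94}: the NDR representation of \fullref{NDR} is built from the cube coordinates alone, hence extends to $\widetilde{\mathcal{E}}_n(j;CX,X)$, and (2) is definitional.

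Your account of (3), however, has a genuine gap. You claim that after horizontal decomposition ``the left-hand piles produce exactly $F_{-q}C_n(X)/F_{-q-1}C_n(X)$'' while ``the rightmost block contributes the stable factor $\Omega^{n-1}\Sigma^n X$.'' This cannot be right on elementary counting grounds: \fullref{Decompose} shows that $F_{-q}C_n(X)/F_{-q-1}C_n(X)$ is assembled from configurations with exactly $q$ piles, so you cannot peel one off and still land in that quotient. You also identify the rightmost pile with $E_{n-1}(CX,X)$ and then assert it ``contributes'' $\Omega^{n-1}\Sigma^n X$; but $E_{n-1}(CX,X)$ is a path-space model and is contractible, so it would contribute nothing stably. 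The base factor $C_{n-1}(\Sigma X)\simeq \Omega^{n-1}\Sigma^n X$ does not come from one of the $q$ piles of $n$--cubes at all. It comes from the cone coordinate in the labels $CX$ together with the ``extendable to the right'' condition, which jointly encode the projection $E_n(CX,X)\to C_{n-1}(\Sigma X)$ sitting on top of the underlying $n$--cube configuration; note that a cube with a cone label need not lie in the rightmost pile, only be horizontally unobstructed. In \cite{Tamaki94} part~(3) is obtained by analyzing this projection filtration-level by filtration-level, not by the pile-splitting you describe.
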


From these facts it is clear that the gravity filtration on
\[
\bigvee_{j=1}^{\infty} \widetilde{\mathcal{E}}_n(j;CX,X)/\Sigma_j
\]
satisfies the condition in \fullref{FilteredFibration}. Thus we
obtain the following remaining part of \fullref{MainTheorem}.

\begin{corollary}
 If $h_*(\Omega^{n-1}\Sigma^n X)$ is $h_*$--flat, the gravity spectral
 sequence in \cite{Tamaki94} is isomorphic to the classical
 Eilenberg--Moore spectral sequence from the $E^2$--term on.
\end{corollary}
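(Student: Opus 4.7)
The plan is to apply \fullref{FilteredFibration} and \fullref{IsotoEMSS} to the little-cubes model
\[
C_n(X) \longrightarrow E_n(CX,X) \longrightarrow C_{n-1}(\Sigma X)
\]
of the path-loop fibration on $\Omega^{n-1}\Sigma^nX$, with $E_n(CX,X)$ carrying the gravity filtration constructed in \cite{Tamaki94}. The three properties of the filtration recorded in the preceding proposition are tailored precisely to the hypotheses of \fullref{FilteredFibration}, so the argument is essentially one of verification.

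First I would use property (1) to see that the gravity filtrations on both $\bigvee_j \Cu_n(j)_+\wedge_{\Sigma_j} X^{\wedge j} \simeq C_n(X)$ and its extension to $\bigvee_j \widetilde{\mathcal{E}}_n(j;CX,X)/\Sigma_j \simeq E_n(CX,X)$ are by pointed cofibrations; property (2) ensures that the fiber inclusion is filtration-preserving, so that the induced filtration on the fiber is exactly the intersection $F_{-q}E_n(CX,X) \cap C_n(X)$, as required for the input to \fullref{FilteredFibration}.

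The crux is the K\"{u}nneth-type identification
\[
\tilde{h}_*(F_{-q}E/F_{-q-1}E) \cong \tilde{h}_*(F_{-q}F/F_{-q-1}F) \otimes_{h_*} \tilde{h}_*(B_+)
\]
of $\tilde{h}_*(B_+)$-comodules, where $B = \Omega^{n-1}\Sigma^nX$. This is supplied directly by property (3), which provides the natural stable equivalence
\[
F_{-q}E_n(CX,X)/F_{-q-1}E_n(CX,X) \shot F_{-q}C_n(X)/F_{-q-1}C_n(X) \wedge \Omega^{n-1}\Sigma^nX,
\]
combined with the standing assumption that $h_*(\Omega^{n-1}\Sigma^nX)$ is $h_*$-flat, which promotes the smash product to the required tensor product via the ordinary K\"{u}nneth isomorphism.

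With these verifications in place, \fullref{FilteredFibration} produces an injective $h_*^B$-display of $G(p)$ with respect to $G(\iota_0)$ whose induced stable filtration on $\Omega^n\Sigma^nX$ is precisely the gravity filtration, and \fullref{IsotoEMSS} then identifies the resulting spectral sequence with the classical Eilenberg--Moore spectral sequence from the $E^2$-term onward. The only delicate point will be matching the $\tilde{h}_*(B_+)$-comodule structures on the two sides of the K\"{u}nneth identification: the stable splitting of property (3) is geometric, and one must check that it intertwines the coaction induced by the operadic action of $C_{n-1}(\Sigma X)$ on $E_n(CX,X)$ with the tensor-factor coaction on the right-hand side. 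This compatibility is built into the construction of the filtrations in \cite{Tamaki94}, so the remaining work is essentially bookkeeping assembling the pieces.
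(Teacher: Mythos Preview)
Your proposal is correct and follows exactly the paper's approach: the paper's proof consists of observing that the three properties in the preceding proposition verify the hypotheses of \fullref{FilteredFibration}, so that \fullref{IsotoEMSS} applies. Your write-up is in fact more detailed than the paper's own one-sentence argument, and your flagged ``delicate point'' about comodule compatibility is handled in the paper simply by citing \cite{Tamaki94}.
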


\begin{remark}
 The same argument works for spectral sequences constructed in \cite{Tamaki02}.
\end{remark}

\bibliographystyle{gtart}
\bibliography{link}

\end{document}